\definecolor{darkgreen}{rgb}{0,0.5,0}
\newlength{\proofmargin}
\renewcommand*{\backref}[1]{}
\renewcommand*{\backrefalt}[4]{%
  \ifcase #1 %
    \relax
  \or
    $\uparrow$#2.%
  \else
    $\uparrow$#2.%
  \fi%
}
\DeclareFontFamily{U}{wncy}{}
\DeclareFontShape{U}{wncy}{m}{n}{<->wncyr10}{}
\DeclareSymbolFont{mcy}{U}{wncy}{m}{n}
\DeclareMathSymbol{\Sha}{\mathord}{mcy}{"58}
\DeclareMathOperator{\NS}{\mathrm{NS}}
\DeclareMathOperator{\Q}{\mathbf{Q}}
\DeclareMathOperator{\F}{\mathbf{F}}
\DeclareMathOperator{\Z}{\mathbf{Z}}
\DeclareMathOperator{\D}{\mathbf{D}}
\DeclareMathOperator{\cA}{\mathcal{A}}
\DeclareMathOperator{\cX}{\mathcal{X}}
\DeclareMathOperator{\cY}{\mathcal{Y}}
\DeclareMathOperator{\cC}{\mathcal{C}}
\DeclareMathOperator{\PP}{\mathbf{P}}
\DeclareMathOperator{\HH}{\mathrm{H}}
\DeclareMathOperator{\rk}{\mathrm{rk}}
\DeclareMathOperator{\U}{\mathrm{U}}
\DeclareMathOperator{\Mf}{\mathrm{M}_f} 
\DeclareMathOperator{\cMf}{\mathcal{M}_f}
\DeclareMathOperator{\et}{\mbox{\scriptsize \'et}}
\DeclareMathOperator{\nr}{\mbox{\scriptsize nr}}
\DeclareMathOperator{\sC}{\mbox{\scriptsize s}}
\DeclareMathOperator{\dR}{\mbox{\scriptsize dR}}
\DeclareMathOperator{\an}{\mbox{\scriptsize an}}
\DeclareMathOperator{\fil}{\mbox{\scriptsize Fil}}
\DeclareMathOperator{\cris}{\mbox{\scriptsize cris}}
\DeclareMathOperator{\ns}{\mbox{\scriptsize ns}}
\DeclareMathOperator{\rig}{\mbox{\scriptsize{rig}}}
\DeclareMathOperator{\Hf}{\HH^1_{f}}
\DeclareMathOperator{\spec}{\mathrm{Spec}}
\DeclareMathOperator{\Ext}{\mathrm{Ext}}
\DeclareMathOperator{\Aut}{\mathrm{Aut}}
\DeclareMathOperator{\Sym}{\mathrm{Sym}}
\DeclareMathOperator{\Fr}{\mathrm{Fr}}
\DeclareMathOperator{\Hom}{\mathrm{Hom}}
\DeclareMathOperator{\End}{\mathrm{End}}
\DeclareMathOperator{\Gal}{\mathrm{Gal}}
\DeclareMathOperator{\Coker}{\mathrm{Coker}}
\DeclareMathOperator{\Fil}{\mathrm{Fil}}
\DeclareMathOperator{\Pic}{\mathrm{Pic}}
\DeclareMathOperator{\AJb}{\mathrm{AJ}_b}
\DeclareMathOperator{\JacX}{\mathrm{J}}
\DeclareMathOperator{\loc}{\mathrm{loc}}
\DeclareMathOperator{\Sel}{\mathrm{Sel}}
\DeclareMathOperator{\GL}{\mathrm{GL}}
\DeclareMathOperator{\spf}{\mathrm{Spf}}
\DeclareMathOperator{\cl}{\mathrm{cl}}
\DeclareMathOperator{\lra}{\longrightarrow}
\newcommand{\ra}{\rightarrow}
\newcommand{\Ker}{\mathrm{Ker}}
\newcommand{\nice}{nice } 
\newtheorem{theorem}{Theorem}[section]
\newtheorem{prop}[theorem]{Proposition}
\newtheorem*{question}{Question}
\newtheorem{lemma}[theorem]{Lemma}
\newtheorem{corollary}[theorem]{Corollary}
\newtheorem{Definition}[theorem]{Definition}
\theoremstyle{remark}
\newtheorem{Remark}[theorem]{Remark}
\renewenvironment{proof}[1][\proofname]%
{%
\par\pushQED{\qed}\normalfont\topsep6\p@\@plus6\p@\relax%
\begin{list}{}{\rightmargin=8pt\leftmargin=\proofmargin}%
  \item[\hskip\labelsep\bfseries#1\@addpunct{.}]\ignorespaces
}{%
\popQED\end{list}\@endpefalse%
}%
\title{Explicit Chabauty--Kim for the Split Cartan Modular Curve of Level 13}
\author{Jennifer S. Balakrishnan}
\address{Jennifer S. Balakrishnan, Department of Mathematics and Statistics, Boston University, 111 Cummington Mall, Boston, MA 02215, USA}
\email{jbala@bu.edu}
\author{Netan Dogra}
\address{Netan Dogra, Department of Mathematics, Imperial College London, London SW7 2AZ, UK }
\email{n.dogra@imperial.ac.uk}
\author{J. Steffen M\"uller}
\address{J. Steffen M\"uller,
  Johann Bernoulli Institute, 
  University of Groningen
  Nijenborgh 9,
  9747 AG Groningen,
  The Netherlands
}
\email{steffen.muller@rug.nl}
\author{Jan Tuitman}
\address{Jan Tuitman, KU Leuven,
         Departement Wiskunde,
         Celestijnenlaan 200B,
         3001 Leuven,
         Belgium}
\email{jan.tuitman@kuleuven.be}
\author{Jan Vonk}
\address{Jan Vonk, Department of Mathematics and Statistics, McGill University, Montr\'eal H3A 0B9, Canada}
\email{jan.vonk@math.mcgill.com }
\date{}
\begin{document}
% -------------------------------------------------------------

\begin{abstract} We extend the explicit quadratic Chabauty methods developed in
  previous work by the first two authors
  to the case of non-hyperelliptic curves. 
  This results in an algorithm to compute the rational points on a curve of genus $g \ge
  2$ over the rationals whose Jacobian has Mordell-Weil rank $g$ and Picard number
  greater than one, and which satisfies some additional conditions.
This algorithm is then applied to the modular curve $X_{\sC}(13)$, completing the
classification of non-CM elliptic curves over $\Q $ with split Cartan level structure due to
Bilu--Parent and Bilu--Parent--Rebolledo.\end{abstract}
\maketitle
\tableofcontents

% -------------------------------------------------------------

\section{Introduction}

In this paper, we explicitly determine the rational points on $X_{\sC}(13)$, a genus 3 modular curve defined over $\Q$ with simple Jacobian having Mordell-Weil rank 3. This computation makes explicit various aspects of Minhyong Kim's nonabelian Chabauty programme and finishes the ``split Cartan'' case of Serre's uniformity
question on residual Galois representations of elliptic curves. Moreover, the broader techniques are potentially of interest for determining rational points on other curves. We begin with an overview of Serre's question, outline our strategy to compute $X_{\sC}(13)(\Q)$ in the context of Kim's nonabelian Chabauty, and end with some remarks on the scope of the method in the toolbox for explicitly determining rational points on curves.

% -------------------------------------------------------------
\subsection{Modular curves associated to residual representations of elliptic curves}
\par If $E/\Q$ is an elliptic curve and $\ell$ is a prime number, then there is a natural
residual Galois representation
\[
  \rho_{E,\ell}:\Gal(\bar{\Q}/\Q)\to \Aut(E[\ell])\simeq \GL_2(\F_\ell).
\]
Serre~\cite{Ser72} showed that if $E$ does not have complex multiplication (CM), then 
$\rho_{E,\ell}$ is surjective for all primes $\ell \gg 0$.
\begin{question} [Serre]
Is there a constant $\ell_0$ such that $\rho_{E,\ell}$ is surjective for all elliptic curves
$E/\Q$ without CM and all primes $\ell>\ell_0$?
\end{question}
It is well-known that if such a constant $\ell_0$ exists, then it must be at least~37. 
To tackle this question, one uses that the maximal proper subgroups of $\GL_2(\F_\ell)$ are 
either Borel subgroups, normalizers of (split or non-split) Cartan subgroups, or exceptional subgroups.
The Borel and the exceptional cases were handled by Mazur~\cite{Maz78} and
Serre~\cite{Ser72}, respectively, and the case of normalizers of split Cartan subgroups
(for $\ell>13$) follows from
results of Bilu-Parent~\cite{BP11} and Bilu-Parent-Rebolledo~\cite{BPR13}, which we now recall.

\par
For a prime $\ell$, we write $X_{\sC}(\ell)$ for the modular curve $X(\ell)/C_{\sC}(\ell)^+$, 
where $C_{\sC}(\ell)^+$ is the normalizer of a split Cartan subgroup of $\GL_2(\F_\ell)$.
Since all such subgroups $C_{\sC}(\ell)^+$ are conjugate, $X_{\sC}(\ell)$ is well-defined up to
$\Q$-isomorphism.
Bilu--Parent~\cite{BP11} proved the existence of a constant $\ell_{\sC}$ such
that $X_{\sC}(\ell)(\Q)$ only consists of cusps and CM points for all primes $\ell>\ell_{\sC}$. 
This was later improved by Bilu-Parent-Rebolledo~\cite{BPR13} who showed that the
statement holds for all $\ell >7$, $\ell \ne 13$.
This proves that, for all primes $\ell>7$, $\ell \ne 13$, there exists no elliptic curve $E/\Q$ without CM
whose mod-$\ell$ Galois representation has image contained in
the normalizer of a split Cartan subgroup of $\GL_2(\F_\ell)$. However, they were unable to
prove this statement for $\ell=13$.

\par  Bilu--Parent and  Bilu--Parent--Rebolledo use a clever combination of several techniques
for finding all rational points on the curves $X_{\sC}(\ell)$,
but one of the crucial ingredients is an application of Mazur's method~\cite{Maz78} to show an
integrality result for non-cuspidal rational points on $X_{\sC}(\ell)$.
The latter relies on the statement 
\[ \mathrm{Jac}(X_{\sC}(\ell)) \sim \mathrm{Jac}(X_{0}^+(\ell^2)) \sim J_0(\ell) \times
\mathrm{Jac}(X_{\ns}(\ell)) \]
proved by Momose \cite{Mom86},
where $X_{\ns}(\ell)$ is the modular curve associated to the normalizer of a non-split Cartan
subgroup of $\GL_2(\F_\ell)$, similar to the split case.
Mazur's method applies whenever $J_0(\ell) \ne 0$, which is the case for
$\ell=11$ and $\ell\ge 17$.
But since $J_0(13)=0$, it follows that $\mathrm{Jac}(X_{\sC}(13))\sim
\mathrm{Jac}(X_{\ns}(13))$ and $\mathrm{Jac}(X_{\sC}(13))$ is absolutely simple, which is the
underlying reason that their analysis does not
succeed in tackling that case; 
they call~13 the \textit{cursed level} in~\cite[Remark~5.11]{BPR13}.

\par In fact, Baran~\cite{Bar14a, Bar14b} showed that more is true: There is a $\Q$-isomorphism between
$\mathrm{Jac}(X_{\sC}(13))$ and $\mathrm{Jac}(X_{\ns}(13))$, and we further have 
\begin{equation}\label{eqn:baran_isom}
  X_{\ns}(13) \simeq_{\Q} X_{\sC}(13).
\end{equation}
She derives~\eqref{eqn:baran_isom} in two different ways: by computing explicit 
smooth plane quartic equations for both curves and observing that they are
isomorphic~\cite{Bar14a} on the one hand, and by invoking Torelli's theorem~\cite{Bar14b}
and the isomorphism between the Jacobians on the other.
There is no known modular interpretation of the isomorphism~\eqref{eqn:baran_isom}.
Since the problem of computing rational points on modular curves associated to
normalizers of non-split Cartan subgroups is believed to be hard in general, this
gives some indication why $X_{\sC}(13)$ is more difficult to handle than $X_{\sC}(\ell)$ for
other $\ell\ge 11$.

\par 
Galbraith~\cite{Gal02} and Baran\cite{Bar14a} computed all rational points up to a large
height bound; they found~6 CM-points and one cusp.
However, in addition to Mazur's method, it turns out that the other standard 
approaches to proving that this is the complete set of rational points also do not
seem to work for $X_{\sC}(13)$. 
The method of Chabauty and Coleman (see~\S\ref{chab-col}) fails because 
the rank of $\mathrm{Jac}(X_{\sC}(13))$ is at least~3, and the genus of $X_{\sC}(13)$ is 3. 
The Mordell-Weil sieve cannot
be applied on its own, because $X_{\sC}(13)(\Q) \neq \emptyset$. 
Descent techniques and elliptic curve Chabauty also do not seem to work in practice, because 
no suitable covers of $X_{\sC}(13)$ are readily available.

\par In this paper we show, using quadratic Chabauty,
that the only rational points on $X_{\sC}(13)$ are indeed the points
found by Galbraith and Baran.
\begin{theorem}\label{thm:main_thm}
The rational points on $X_{\sC}(13)$ consist of six CM-points and one cusp.
\end{theorem}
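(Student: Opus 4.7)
The plan is to apply the quadratic Chabauty framework developed earlier in this paper. Since $\rk \JacX(X_{\sC}(13))(\Q) = 3 = g$, classical Chabauty--Coleman is unavailable; however, $\JacX(X_{\sC}(13))$ has Picard number strictly greater than one, coming for instance from the Hecke action and from the endomorphisms produced by Baran's isomorphism $X_{\ns}(13) \simeq X_{\sC}(13)$. This gives a non-trivial algebraic correspondence on $X_{\sC}(13)$, hence a non-trivial mixed extension living in the Selmer scheme in depth two, and this extra cycle is what drives the construction of a quadratic Chabauty function.

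Concretely, I would first fix a prime $p$ of good reduction for which Tuitman's algorithm for computing rigid cohomology and Coleman integrals on the plane quartic model of $X_{\sC}(13)$ is effective, and for which $X_{\sC}(13)(\F_p)$ has a manageable number of residue disks for the sieve that follows. Using the extra correspondence together with $p$-adic height theory, one constructs a locally analytic function $\rho\colon X_{\sC}(13)(\Q_p) \to \Q_p$ that on rational points satisfies $\rho(x) = \sum_{v \ne p} h_v(x)$ for a sum of non-archimedean local heights. Since $X_{\sC}(13)$ has good reduction away from $13$, only the local height at $v = 13$ can be nontrivial, and controlling it gives an explicit finite set $T \subset \Q_p$ of admissible values. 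Next, for each $t \in T$, one enumerates the zeros of $\rho - t$ inside each residue disk of $X_{\sC}(13)(\Q_p)$; the seven known rational points must appear, and any further $p$-adic solutions are eliminated by a Mordell--Weil sieve against several auxiliary primes of good reduction, using the embedding $X_{\sC}(13) \hookrightarrow \JacX(X_{\sC}(13))$ relative to a basepoint and the known generators of the Mordell--Weil group.

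The main obstacle, beyond the considerable computational overhead of working with a non-hyperelliptic plane quartic rather than a hyperelliptic curve, will be verifying the nontriviality and numerical correctness of the constructed function: one must produce an explicit nontrivial class in the depth-two Selmer scheme from the extra correspondence, compute the relevant iterated Coleman integrals and $p$-adic heights to sufficient precision via Tuitman's algorithm, and rigorously bound the contribution of the local height at $13$ in order to pin down the finite set $T$. Once $\rho$ and $T$ are in hand, eliminating spurious $p$-adic zeros by the Mordell--Weil sieve is likely to be the delicate, curve-specific final step that decides whether the method ultimately succeeds at this cursed level.
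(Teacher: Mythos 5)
Your overall strategy is the right one, but there is a genuine gap at the step you yourself flag as an ``obstacle'': the contribution of the local height at $v=13$. For \emph{rational} (as opposed to integral) points there is no general procedure for bounding or enumerating the values of the local heights away from $p$; the Kim--Tamagawa theorem (Theorem~\ref{thm:KT}) only gives abstract finiteness of the set of values, not an explicit set $T$. The paper's proof resolves this by a specific geometric input that your proposal is missing: using Edixhoven's semi-stable models of $X_0(169)$ and the Atkin--Lehner quotient, one shows (Theorem~\ref{ThmSemi}, Corollary~\ref{cor:redn_13}) that $X_{\sC}(13)$ has \emph{potentially good} reduction at $13$, and then Lemma~\ref{lemma:height-away-p} shows that the local height of the relevant mixed extension at any $v\neq p$ vanishes whenever the representation is potentially unramified. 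Hence $\Upsilon=\{0\}$ (Corollary~\ref{cor:qc_pair_pg}), and no bound at $13$ ever has to be produced. Without this step your finite set $T$ is not computable, and the rest of the plan cannot be executed. (Relatedly, the decomposition you write, $\rho(x)=\sum_{v\neq p}h_v(x)$ with $h_v$ local heights of divisors, is the integral-point formulation; for rational points one must work with Nekov\'a\v r heights of the mixed extensions $A_Z(b,x)$, which is exactly what makes Lemma~\ref{lemma:height-away-p} applicable.)

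Beyond that gap, your endgame also diverges from the paper's: you propose one quadratic Chabauty function followed by a Mordell--Weil sieve, whereas the paper exploits $\rho=3$ and $r=g=3$ to build \emph{two} independent \nice classes $Z_1,Z_2$, hence two quadratic Chabauty pairs, and intersects the zero sets of the two determinant functions of Lemma~\ref{lemma:qc_pair} (computed on two affine charts at $p=17$, with a separate argument for the leftover residue disk). The common zeros are exactly the seven known points, so no sieve is needed. Your sieve-based variant is not wrong in principle, but it adds a delicate, unproved final step where the paper has none; also note that the rank statement $r=3$ you take for granted is itself proved in the paper via Gross--Zagier and Kolyvagin--Logachev together with a verification that $L'(f,1)\neq 0$.
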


\par
Together with the results of Bilu--Parent and Bilu--Parent--Rebolledo,
this allows us to complete the characterisation
of all primes $\ell$ such that the mod-$\ell$ Galois representation of a non-CM elliptic curve over $\Q$
is contained in the normalizer of a split Cartan subgroup of $\GL_2(\F_\ell)$.
\begin{theorem}\label{thm:serre_split}
  Let $\ell$ be a prime number. Then there exists an elliptic curve $E/\Q$ without CM
  such that the image of its mod-$\ell$ Galois representation is contained in
   the normalizer of a split Cartan subgroup of $\GL_2(\F_\ell)$ if and only if $\ell \le 7$.
\end{theorem}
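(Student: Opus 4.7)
The plan is to deduce Theorem~\ref{thm:serre_split} by combining Theorem~\ref{thm:main_thm} with the work of Bilu--Parent~\cite{BP11} and Bilu--Parent--Rebolledo~\cite{BPR13}, after translating the statement into a statement about rational points on $X_{\sC}(\ell)$. The first preparatory step is to invoke the standard moduli interpretation: a non-cuspidal rational point of $X_{\sC}(\ell)$ corresponds to a $\bar{\Q}$-isomorphism class of elliptic curve $E/\Q$ whose mod-$\ell$ Galois representation has image conjugate to a subgroup of $C_{\sC}(\ell)^+$, and such a point is CM if and only if $E$ has CM. The theorem is therefore equivalent to showing that $X_{\sC}(\ell)(\Q)$ contains a non-cuspidal non-CM point if and only if $\ell \le 7$.

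For the ``if'' direction, I would treat each prime $\ell \in \{2,3,5,7\}$ in turn by observing that $X_{\sC}(\ell)$ has genus $0$ and a rational point, hence is $\Q$-isomorphic to $\mathbf{P}^1$ and so possesses infinitely many rational points. Since the cusps and the CM locus account for only finitely many of these, infinitely many correspond to non-CM elliptic curves over $\Q$ with the required mod-$\ell$ image property; explicit witnesses can be extracted from standard tables if needed.

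For the ``only if'' direction, I would dispatch the range $\ell > 7$, $\ell \ne 13$ by directly quoting the main theorem of~\cite{BPR13}, which asserts that $X_{\sC}(\ell)(\Q)$ is contained in the union of cusps and CM points, ruling out a non-CM counterexample. The sole remaining case is the ``cursed level'' $\ell = 13$, and this is precisely the content of Theorem~\ref{thm:main_thm}: the seven rational points listed there are six CM points and one cusp, so no non-CM elliptic curve $E/\Q$ has mod-$13$ image in a split Cartan normalizer.

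The entire genuine difficulty is concentrated in the case $\ell = 13$, which is the \emph{raison d'être} of the present paper and whose resolution via explicit quadratic Chabauty is Theorem~\ref{thm:main_thm}. Once that is granted, Theorem~\ref{thm:serre_split} is obtained by mere assembly of pieces already available in the literature, and no further obstacle remains.
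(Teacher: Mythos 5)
Your proposal is correct and follows essentially the same route the paper takes (implicitly, since the paper does not spell out a separate proof of Theorem~\ref{thm:serre_split} but simply combines Theorem~\ref{thm:main_thm} with~\cite{BP11,BPR13}): reduce via the moduli interpretation to the presence of non-cuspidal non-CM rational points on $X_{\sC}(\ell)$, quote~\cite{BPR13} for $\ell>7$, $\ell\neq 13$, and use Theorem~\ref{thm:main_thm} for the cursed level $\ell=13$. Your genus-$0$ argument for $\ell\le 7$ supplies the ``if'' direction, which the paper leaves implicit as folklore, and is indeed the standard way to see it.
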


\par
Via the isomorphism~\eqref{eqn:baran_isom}, we also find
\begin{corollary}\label{cor:non_split}
  We have $| X_{\ns}(13)(\Q)| = 7$, and all points are CM.
\end{corollary}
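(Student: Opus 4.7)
The plan is to combine Theorem~\ref{thm:main_thm} with the $\Q$-isomorphism~\eqref{eqn:baran_isom} for the cardinality, and then to account for the seven rational CM points on $X_{\ns}(13)$ by an independent enumeration.

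The count $|X_{\ns}(13)(\Q)| = 7$ is immediate: since \eqref{eqn:baran_isom} is an isomorphism of $\Q$-varieties, it induces a bijection on rational points, and Theorem~\ref{thm:main_thm} furnishes the count on the split-Cartan side.

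The second assertion---that each of the seven points is CM---requires more thought, since Baran's isomorphism is not known to be of modular origin, so the labels ``CM'' and ``cuspidal'' cannot be transferred naively from $X_{\sC}(13)$ to $X_{\ns}(13)$.  My plan is instead to produce seven rational CM points on $X_{\ns}(13)$ directly, after which the matching count from the previous paragraph forces them to be all of $X_{\ns}(13)(\Q)$.  Concretely, every CM elliptic curve over~$\Q$ has CM by one of the thirteen imaginary quadratic orders of class number one, and if $E/\Q$ has CM by an order in which $13$ is inert, then $\rho_{E,13}$ lies in the normalizer of a non-split Cartan subgroup of $\GL_2(\F_{13})$, producing a $\Q$-rational point on $X_{\ns}(13)$. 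A short computation with quadratic reciprocity shows that $13$ is inert in exactly seven of the thirteen relevant orders---those in $\Q(\sqrt{-2})$, $\Q(\sqrt{-7})$ (for both the maximal order and its order of conductor~$2$), $\Q(\sqrt{-11})$, $\Q(\sqrt{-19})$, $\Q(\sqrt{-67})$, and $\Q(\sqrt{-163})$---yielding the required seven rational CM points.

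The main obstacle is the conceptual one flagged above: because \eqref{eqn:baran_isom} is not known to be modular, the CM classification does not transfer automatically, and a separate enumeration is needed. As a sanity check one can run the same analysis on the split side, where $13$ is required to split in the CM order: this picks out exactly the six CM classes in $\Q(\sqrt{-3})$ (discriminants $-3,-12,-27$), $\Q(\sqrt{-1})$ (discriminants $-4,-16$), and $\Q(\sqrt{-43})$, matching the six CM points identified in Theorem~\ref{thm:main_thm}.
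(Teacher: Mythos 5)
Your proposal is correct, and it is essentially the paper's argument with one extra (and welcome) layer of care. The paper deduces the corollary simply ``via the isomorphism''~\eqref{eqn:baran_isom}: the $\Q$-isomorphism transports the count from Theorem~\ref{thm:main_thm}, and the identification of the seven rational points of $X_{\ns}(13)$ as CM points is taken from the prior explicit computations of Galbraith and Baran, who listed these points and their CM orders. You transfer only the cardinality and then re-derive the CM statement intrinsically, by observing that each of the class-number-one orders in which $13$ is inert yields a rational point of $X_{\ns}(13)$, and that there are exactly seven such orders (discriminants $-7,-8,-11,-19,-28,-67,-163$); your list and the complementary split-side check (discriminants $-3,-4,-12,-16,-27,-43$) are both correct. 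This is a genuine strengthening of the exposition, since you rightly note that the labels ``CM'' and ``cusp'' cannot be pushed through~\eqref{eqn:baran_isom}, which has no known modular interpretation. The only point worth making explicit is that your seven CM points are pairwise distinct: this follows because the seven orders have distinct $j$-invariants, so their images under $X_{\ns}(13)\to X(1)$ already differ. With that remark added, the matching count forces $X_{\ns}(13)(\Q)$ to consist exactly of these seven CM points, as claimed (and in particular none of the cusps, which are defined over $\Q(\zeta_{13})^{+}$, is rational --- consistent with your conclusion).
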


\begin{Remark}\label{rk:class_number}
  As was noted by Serre \cite{Ser97} a complete determination of the rational points on
  $X_{\ns}(N)$ for some $N$ leads to a proof of the class number one problem.
  Corollary~\ref{cor:non_split} therefore gives a new proof of this theorem.
\end{Remark}

\subsection{Notation } \label{sec:notn}
\par
  Throughout this paper, $X/\Q$ will denote a smooth projective geometrically connected curve
  of genus $g \geq 2$ such that $X(\Q) \ne \emptyset$, and we denote its
  Jacobian $\mathrm{Jac}(X)$ by $\JacX$; 
  we write $r := \rk(\JacX/\Q)$ and $\rho := \rk(\NS(\JacX))$.
  We fix an algebraic closure $\overline{\Q}$ of $\Q$ and write 
  $G_{\Q} := \Gal(\overline{\Q}/\Q)$ and $\overline{X} := X \times \overline{\Q}$.
We fix a base point $b \in X(\Q)$ and a  prime $p$ such that $X$ has good reduction at $p$.
The field $\End(J)\otimes \Q$ is denoted by $K$ and we set 
$
\mathcal{E} :=  \HH^0 (X_{\Q _p },\Omega ^1 )^* \otimes \HH^0 (X_{\Q _p },\Omega ^1 )^*.
$ 
Denoting by $T_0$ the set of primes of bad reduction of $X$, we set $T = T_0 \cup \{p \}$,
and let $G_T$ denote the maximal quotient of $\Q$ unramified outside $T$. For a prime $v$, we let $G_v$ denote the absolute Galois group of $\Q_v$.

\subsection{Chabauty--Coleman and Chabauty--Kim } \label{chab-col}
\par Chabauty~\cite{Cha41} proved the Mordell conjecture for curves $X$ as above, satisfying an additional assumption on the rank of the Jacobian. More precisely, Chabauty showed that the set $X(\Q)$ is finite if $r < g$. Following Coleman ~\cite{Col85}, one may explain the proof as follows. The choice of base point $b$ gives an inclusion of $X$ into $\JacX$,
defined over $\Q$. On $\JacX(\Q _p )$ there is a linear integration pairing on the Jacobian defined by explicit power series integration on individual residue polydisks, extended via the group law
\[ \JacX(\Q_p) \times \HH^0(\JacX_{\Q_p},\Omega^1) \ \lra \ \Q_p: (D,\omega) \mapsto \int_{0}^D\omega,\]
inducing a homomorphism 
\[\log:\JacX(\Q_p) \ \lra \ \HH^0(\JacX_{\Q_p},\Omega^1)^\ast.\]
Via the canonical identification of $\HH^0(\JacX_{\Q_p},\Omega^1)$ with
$\HH^0(X_{\Q_p},\Omega^1)$, this gives rise to the following commutative diagram:
\begin{equation}\label{eqn:diagram-ab}
\resizebox{8.5cm}{!}{
\begin{tikzpicture}[->,>=stealth',baseline=(current  bounding  box.center)]
 \node[] (X) {$X(\Q)$};
 \node[right of=X, node distance=3.7cm]  (Xp) {$X(\Q_p)$};   
 \node[below of=X, node distance=1.5cm]  (Hf) {$\JacX(\Q)$};
 \node[right of=Hf,node distance=3.7cm] (Hfp) {$\JacX(\Q_p)$};
 \node[right of=Hfp,node distance=3.5cm](Dieu) {$\HH^0(X_{\Q_p},\Omega^1)^\ast$}; 

 \path (X)  edge node[left]{\footnotesize $$} (Hf);
 \path (Xp) edge node[left]{\footnotesize $$} (Hfp);
 \path (X)  edge (Xp);
 \path (Hf) edge node[above]{} (Hfp);
 \path (Hfp) edge node[above]{$\log $}(Dieu);
 \path (Xp)  edge node[above right]{$\AJb $} (Dieu);
\end{tikzpicture}}
\end{equation}
where the Abel-Jacobi morphism $\AJb $ is defined to be the
map sending a point $x$ to the linear functional $\omega \mapsto \int^x_b
\omega $. Chabauty's proof involves a combination of global ``arithmetic'' or ``motivic''
information with local ``analytic'' information. The global 
arithmetic input is that, when $r<g$, the closure $\overline{\JacX(\Q)}$ of $\JacX(\Q)$ with
respect to the $p$-adic topology is of codimension $\geq 1$. 
Hence there is a nonzero functional
$\omega_{J}$ which vanishes on $\overline{\JacX(\Q)}$, so that $X(\Q)$ is annihilated by the function
\begin{equation}\label{eqn:fn_chabauty}
  x \mapsto \AJb(x)(\omega_J). 
\end{equation}
The local analytic input is that, on each residue disk of $X(\Q _p )$, $\AJb$ has Zariski
dense image and is given by a convergent $p$-adic power series, so the function in~\eqref{eqn:fn_chabauty} can have only finitely many zeroes on each residue disk of $X(\Q _p )$. The non-trivial steps in solving for the function $\int _b \omega _J $ annihilating rational points are then:
\begin{itemize}
\item Determine, on each residue disk, the power series $\AJb$ to sufficient $p$-adic accuracy.
\item Evaluate $\AJb (P_i)$ on a basis $\{P_i\}$ of $\JacX(\Q )\otimes \Q$.
\end{itemize}

\par
With the aim of removing the restrictive condition $r<g$, Kim~\cite{Kim05,Kim09} has initiated a programme to
generalise Chabauty's approach. As in the method of Chabauty and Coleman, one hopes to be able to translate Kim's approach into a practical explicit method for computing (a finite set of $p$-adic points containing)
$X(\Q)$ in practice for a given curve $X/\Q$ having $r \ge g$. However, in part due to the technical nature of the objects involved, this is a rather delicate task. Kim's results~\cite{Kim05} on integral points on $\mathbb{P}^1\setminus\{0,1,\infty\}$ have been made explicit by Dan-Cohen and Wewers~\cite{DCW15} and used to develop an algorithm to solve the $S$-unit equation~\cite{DCW16, DC} using iterated $p$-adic integrals.
The work~\cite{BDCKW} of the first author with Dan-Cohen, Kim and Wewers contains explicit
results for integral points on elliptic curves of ranks 0 and 1.

\subsection{Quadratic Chabauty.} \label{sec:qc_pair}
\par
One approach that has led to some explicit results is to 
relate Kim's ideas to $p$-adic heights. 
Here we formalize this approach in elementary terms.
Suppose $r=g$, and the $p$-adic closure of
$\JacX(\Q )$ has finite index in $\JacX(\Q _p )$. Then the Abel-Jacobi morphism induces an isomorphism $\JacX(\Q)\otimes \Q _p \simeq \HH^0 (X_{\Q _p },\Omega ^1 )^* $, meaning that we cannot detect
global points among local points using linear relations in the Abel-Jacobi map. 
The idea of the quadratic Chabauty method is to replace linear relations by bilinear
relations. Suppose we can find a function
$
\theta :X(\Q _p )\to \Q _p 
$
and a finite set $\Upsilon \subset \Q _p $ 
with the following properties:
\begin{itemize}
\item[(a)] On each residue disk $]x[ \subset X(\Q _p )$, the map
\[
(\AJb ,\theta )\ :\ X(\Q _p )\ \lra \ \HH^0 (X_{\Q _p },\Omega ^1 )^* \times \Q _p
\] 
has Zariski dense image and is given by a convergent power series.
\item[(b)] There exist an endomorphism $E$ of $\HH^0 (X_{\Q _p },\Omega ^1 )^*$, a functional
  $c\in \HH^0 (X_{\Q _p },\Omega ^1 )^* $, and a bilinear form 
    $B : \HH^0 (X_{\Q _p },\Omega ^1)^* \otimes \HH^0 (X_{\Q_p},\Omega ^1 )^* \to \Q _p $,
    such that, for all $x\in X(\Q )$,
\begin{equation}\label{quadratic_chab}
\theta (x)-B(\AJb (x),E(\AJb (x))+c) \in \Upsilon .
\end{equation}
\end{itemize}
This gives a finite set of $p$-adic points containing $X(\Q )$, since property (a) implies that 
only finitely many $p$-adic points can satisfy equation \eqref{quadratic_chab}, and property (b) implies all rational points satisfy it.
As in the Chabauty-Coleman method, finiteness is obtained by a combination of local
analytic information and global arithmetic information. We shall refer to $(\theta,\Upsilon )$ as a \textit{quadratic Chabauty pair}. The objects $E,c,$ and $B$ of a quadratic Chabauty pair will be referred to as its endomorphism, constant and pairing, respectively. 

\par The goal of the quadratic Chabauty method is to be able to use a quadratic Chabauty pair (or several of them) to \textit{determine} $X(\Q )$. Let us clarify how the pair $(\theta ,\Upsilon )$ (as well as knowledge of the implicit $E$ and $c$) gives a method for determining a finite set containing $X(\Q )$.  
In practice, one can calculate $E$ and $c$, but $B$ is something one has to solve for, in the same way that one solves for the annihilating differential in the Chabauty-Coleman method. 

\par
For $\alpha \in \Upsilon $, define 
\[
  X(\Q _p )_{\alpha }:=\{x\in X(\Q _p ):\theta (x)-B(\AJb (x),E(\AJb (z))+c) = \alpha \}.
\] 
By definition, $X(\Q )\subset \sqcup _{\alpha \in \Upsilon }X(\Q _p )_{\alpha }$, and we focus on the problem of describing $X(\Q _p )_{\alpha }$. The following result gives an explicit equation for a finite subset of $X(\Q_p)$ containing $X(\Q_p)_{\alpha}$. Suppose we have $P_1 ,\ldots ,P_m\in X(\Q)$ such that 
\[
\AJb (P_i )\otimes (E(\AJb (P_i ))+c)
\]
form a basis of $\mathcal{E}$, and suppose that $\psi_1,\ldots,\psi_m$ form a basis of $\mathcal{E}^*$. Assume furthermore that we have $P_i \in X(\Q_p )_{\alpha _i }$, where $\alpha_i \in \Upsilon$. For $x \in X(\Q_p)$, define the matrix $T(x) = T_{(\theta, \Upsilon)}(x)$ by
\[
  T (x) = 
\left(
\begin{array}{cccc}
  \theta (x)-\alpha & \Psi_1 (x) & \ldots &   \Psi_m (x)   \\
  \theta (P_1 )-\alpha _1 &  \Psi_1 (P_1) & \ldots &  \Psi_m (P_1) \\
\vdots & \vdots & \ddots & \vdots \\
  \theta (P_m )-\alpha _m &   \Psi_1 (P_m) & \ldots &  \Psi_m (P_m) \\
 \end{array}
\right), 
\]
where $\Psi_i(x) :=  \psi_i (\AJb (x )\otimes (E(\AJb (x ))+c))$. Since $B$ is a linear combination of the $\psi_i$, we get:
\begin{lemma}\label{lemma:qc_pair}
  If $x \in X(\Q_p)_{\alpha }$, then we have
  \[
\det(T (x))=0.
  \]
\end{lemma}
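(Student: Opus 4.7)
The plan is to exploit the fact that $B$ lies in the dual space $\mathcal{E}^\ast$, which by hypothesis has $\psi_1,\ldots,\psi_m$ as a basis. So I first write
\[
B \ = \ \sum_{j=1}^m \lambda_j \psi_j
\]
for some scalars $\lambda_j \in \Q_p$. With this expansion, for any $y \in X(\Q_p)$ we have
\[
B(\AJb(y), E(\AJb(y))+c) \ = \ \sum_{j=1}^m \lambda_j \Psi_j(y),
\]
since $\Psi_j(y) = \psi_j(\AJb(y) \otimes (E(\AJb(y))+c))$ by definition.

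Next I would translate the defining condition of the various sets $X(\Q_p)_{\alpha_\bullet}$ into a single linear identity. For $y = x$ with $x \in X(\Q_p)_\alpha$, the defining relation gives
\[
\theta(x)-\alpha \ = \ \sum_{j=1}^m \lambda_j \Psi_j(x),
\]
and likewise, for each $i=1,\ldots,m$, the hypothesis $P_i \in X(\Q_p)_{\alpha_i}$ gives
\[
\theta(P_i)-\alpha_i \ = \ \sum_{j=1}^m \lambda_j \Psi_j(P_i).
\]

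Finally, I read these $m+1$ identities as a linear dependence relation among the columns of $T(x)$: the column vector $(1, -\lambda_1, \ldots, -\lambda_m)^{\mathrm{T}} \in \Q_p^{m+1}$ is nonzero (its first entry is $1$) and each of the $m+1$ rows of $T(x)$, when paired with this vector, yields $\theta(\bullet)-\alpha_\bullet - \sum_j \lambda_j \Psi_j(\bullet) = 0$. Thus $T(x)$ has a nontrivial kernel, so $\det(T(x))=0$.

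There is no real obstacle here: the statement is a clean piece of linear algebra once one recognises that $B$ is a linear combination of the dual basis $\psi_1,\ldots,\psi_m$. The substantive work (finding the pair $(\theta,\Upsilon)$, verifying (a) and (b), and actually computing the $\lambda_j$) happens elsewhere; this lemma merely packages the result into an effectively computable vanishing condition.
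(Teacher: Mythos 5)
Your proof is correct and follows exactly the approach the paper intends (the paper offers no written-out proof, merely the one-line remark ``Since $B$ is a linear combination of the $\psi_i$, we get:''). Writing $B = \sum_j \lambda_j \psi_j$, translating the defining relations of $X(\Q_p)_\alpha$ and $X(\Q_p)_{\alpha_i}$ into $\theta(\bullet)-\alpha_\bullet = \sum_j \lambda_j \Psi_j(\bullet)$, and then exhibiting the kernel vector $(1,-\lambda_1,\ldots,-\lambda_m)^{\mathrm{T}}$ is precisely the linear algebra the paper is alluding to.
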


\subsection{Quadratic Chabauty pairs for rational points}
\par
The definition of quadratic Chabauty pairs is inspired by an approach for computing
\textit{integral} points on rank 1 elliptic curves~\cite{BB15}, and more generally, on
odd degree hyperelliptic curves~\cite{BBM16}, which satisfy the assumptions of~\S\ref{sec:qc_pair}, as follows. 
Let $h:\JacX(\Q) \to \Q_p$ denote the $p$-adic height function~\cite{CG89}. 
Then, for $x \in X(\Q)$ there is a decomposition 
\begin{equation}\label{qc0_decomp}
  h(x-\infty) = h_p(x) +\sum_{v \ne p}h_v(x)
\end{equation}
of $h(x-\infty)$ into a sum of local heights
such that $x\mapsto h_p(x)$ extends to a locally
analytic function $\theta:X(\Q_p) \to \Q_p$ (in fact a sum of double Coleman integrals), and for $v \ne p$ the
function $x \mapsto h_v(x)$ 
maps integral points in $X(\Q)$ into a finite subset of $\Q_p$, and
this set is trivial if $v$ is a prime of good reduction.
By assumption, the $p$-adic height can be expressed
in terms of a bilinear map on $\HH^0(X_{\Q_p}, \Omega^1)^*$.
Because $\theta$ and the set $\Upsilon$ of possible values of $\sum_{v \ne p} h_v(x)$
for integral $x \in X(\Q)$ can be computed explicitly for a given curve $X$, 
this can be turned into a practical method for computing the integral points~\cite{BBM17}.
The main problem in generalizing this idea to rational points is that we have no 
way to control the values of the local heights $h_v(x)$ away from $p$ when $x$ is
rational, but not necessarily integral. 

\par 
Following ~\cite{BD16}, we construct a quadratic Chabauty pair by
associating to points of $X$ (a mixed extension of) Galois representations, and then 
taking the $p$-adic height of this Galois representation in the sense of Nekov\'a\v r ~\cite{Nek93}.
In~\cite[\S5]{BD16}, a suitable $G_L$-representation $A_Z(b,x)$ is constructed for every $x \in
X(L)$, where $L/\Q $. 
This depends on the choice of a correspondence $Z$ on $X$ satisfying certain properties;
such a correspondence always exists when $\rho>1$. By \cite[Theorem 1.2]{BD16}, the height of $A_Z (b,x)$ is equal to the height pairing between two divisors given explicitly in terms of $b,x$ and $Z$. An alternative approach, taken in this paper, is to work with the representation $A_Z (b,x)$ directly, without determining the corresponding divisors. The advantage of the latter approach is that one does not need an explicit geometric description of the correspondence $Z$, but only its cycle class.

\par
In the following, we shall denote Nekov\'a\v r's $p$-adic height by $h$.
Similar to~\eqref{qc0_decomp}, there is a local decomposition
\[
  h(A_Z(b,x)) = h_p(A_Z(b,x)) + \sum_{v \neq p} h_v(A_Z(b,x)),
\]
where now $x \mapsto h_p(A_Z(b,x))$ again extends to a locally analytic function 
$\theta:X(\Q_p)\to \Q_p$ by Nekov\'a\v r's construction~\cite{Nek93}, and for $v \ne p$ the local heights $h_v(A_Z(b,x))$ take on a finite set of values $\Upsilon$ for $x \in X(\Q_v)$ by a result of Kim and Tamagawa~\cite{KT08}.
According to~\cite[\S5]{BD16}, this gives a quadratic Chabauty pair $(\theta,\Upsilon)$ whose pairing is
$h$ and whose endomorphism is the one induced by $Z$. 

\par 
Suppose that $X$ satisfies $r=g$ and $\rho>1$, and that the $p$-adic closure of
$\JacX(\Q )$ has finite index in $\JacX(\Q _p )$.
Note that these conditions are satisfied for
many modular curves for which Chabauty-Coleman does not apply 
(see~\cite{Sik}), in particular for $X=X_{\sC}(13)$.  
Also suppose that we have enough rational points $P_1,\ldots,P_m$ to generate
$\mathcal{E}$ as in~\S\ref{sec:qc_pair}.
It follows from Lemma~\ref{lemma:qc_pair} that, if we can solve the following problems explicitly,
then we have an explicit method for computing a finite subset of $X(\Q_p)$
containing $X(\Q)$:
\begin{enumerate}[(i)]
  \item \label{away} Determine the set of values that $h_v(A_Z(b,x))$ can take for $x \in X(\Q_v)$ and $v
  \ne p$.
  \item \label{expand} Expand the function $x \mapsto h_p(A_Z(b,x))$ into a $p$-adic power series on every residue disk.
  \item \label{eval} Evaluate $h(A_Z (b,P_i))$ for $i=1,\ldots,m$.
\end{enumerate}

\par In this paper, we say nothing about problem~\eqref{away} since $X_{\sC}(13)$, our main
object of interest, has potentially good reduction everywhere, so that all local heights
away from $p$ are trivial. This also reduces problem~\eqref{eval} to problem~\eqref{expand}. Nevertheless, in the interest of future applications, we  phrase much of the setup in greater generality than needed for the application to $X_{\sC}(13)$. 

\subsection{Explicit local $p$-adic heights at $p$}
\par
  The main contribution of this paper is to give an explicit algorithm for solving
  problem~\eqref{expand}. This is already done for hyperelliptic curves in~\cite{BD17}, and
we follow the general strategy used there.
As in~\cite{Kim09, H11}, we emphasize the central role played by 
universal objects in neutral unipotent Tannakian categories. 
This approach allows us to make several aspects of~\cite{BD16} and~\cite{BD17} explicit in a
conceptual way. 

\par
The definition of Nekov\'a\v r's local height at $p$ is in terms of $p$-adic Hodge theory.
More precisely, let $M(x)$ denote the image of $A_Z(b,x)$ under Fontaine's
$\D_{\cris}$-functor.
Then $M(x)$ is a filtered $\phi$-module,
and by Nekov\'a\v r's definition,
to construct $h_p(A_Z(b,x))$, it suffices to explicitly describe its Hodge filtration and its Frobenius action, 
It is shown in~\cite{BD17} that $M(x)$ can be described as the pullback along $x$ (viewed as
a section of a suitable affine $\mathcal{Y}/\Z_p$) of a certain universal connection
$\mathcal{A}_Z$, which has the structure of a filtered $F$-isocrystal, and our task
is to find a sufficiently explicit description of this structure. 
In other words, we need to find both the Hodge filtration and the Frobenius
structure on $\mathcal{A}_Z$.
In~\cite{BD17}, the Hodge filtration is computed using a universal property proved by
Hadian~\cite{H11}, and we follow a similar strategy here.
In contrast, the explicit description of the Frobenius structure is more
involved, and constitutes the key new result which makes our approach work. 
In the hyperelliptic situation, one gets a description in terms of Coleman
integrals, but this crucially relies on the hyperelliptic involution~\cite[\S6.6]{BD17}.
Here we characterise the Frobenius structure using a universal property, based on work of
Kim~\cite{Kim09}.

\subsection{Algorithmic remarks and applicability}
\par
We note that while many of the constructions in this paper rely on deep results in
$p$-adic Hodge theory, for a given curve, all of this can subsequently be translated into
rather concrete linear algebra data which can be computed explicitly. For instance,  
instead of working with a correspondence $Z$ explicitly, by the $p$-adic Lefschetz (1,1) 
theorem it is enough to work with the induced Tate class in $\HH^1_{\dR}(X_{\Q_p})\otimes \HH^1_{\dR}(X_{\Q_p})$.
In practice, we fix a basis of $\HH^1_{\dR}(X_{\Q_p})$ and encode our Tate classes as matrices 
with respect to this basis. Computing the structure of $M(x)$ as a filtered $\phi$-module boils down to computing two isomorphisms of $2g+2$-dimensional $\Q_p$-vector spaces
\[
  \Q_p\oplus \HH^1_{\dR}(X_{\Q_p})^*\oplus \Q_p(1) \simeq M(x),
\]
one of which respects the Hodge filtration, while the other one is Frobenius-equivariant. 
In practice, the universal properties discussed above can be described in terms of explicit $p$-adic
differential equations, which we solve using algorithms of the fourth author~\cite{Tui16,
Tui17}.  All of our algorithms have been implemented in the computer algebra system {\tt
Magma}~\cite{BCP97}. 

\par
  The results of this paper remain useful in somewhat less restrictive situations than
  the one considered above. For instance, 
  as noted above, the condition that the curve has potentially good reduction everywhere is
  only used to give a particularly simple solution to problem \eqref{away}
  (and~\eqref{eval}). 
  Also,~\cite[\S5.3]{BD16} discusses an approach to computing a finite set containing $X(\Q)$
  when $r>g$, but $r+1-\rho<g$, and is similar to the one used here.  
  For this approach one also needs to solve problem~\eqref{expand}, and our algorithm for its
  solution applies without change.

 \par 
  Moreover, recall that we have made the assumption that we have enough rational points
  available to span $\mathcal{E}$ as in~\S\ref{sec:qc_pair}.
  In practice, since $\rho >1$, the algebra $K:=\End (J) \otimes \Q$ will be strictly larger than $\Q$ and,
  following~\cite{BD17}, we can construct $h$ so that it is $K$-equivariant.
  This means we can replace $\mathcal{E}$ by $\HH^0
  (X_{\Q _p },\Omega ^1 )^* \otimes _{K\otimes \Q _p } \HH^0 (X_{\Q _p },\Omega ^1 )^*$
  in Lemma~\ref{lemma:qc_pair},
  which cuts down the number of rational points required. We use this for $X=X_{\sC}(13)$.
If we have an algorithm to explicitly compute the $p$-adic height pairing between rational points on the Jacobian,
and we have enough independent rational points on the Jacobian,
then we only need one rational point on $X$, to serve as our base point.

%%%%%%%%%%%%%%%%%%%%%%%%%%%%%%%%%%%%%%%%%%%%%%%%%%%%%%%%%%%%%%%%%%%%%%%%

\subsection{Outline}
\par
In Section \ref{sec:chab-col}, we recall the salient points of Chabauty-Kim theory and in 
Section~\ref{sec:heights} we recall the definition of Nekov\'a\v r's $p$-adic heights
and how $p$-adic heights can be used to construct quadratic Chabauty pairs for rational
points.
Section~\ref{sec:comphodge} discusses the computation of the Hodge
filtration on a universal connection $\mathcal{A}_Z$, and Section \ref{sec:compfrob} contains a
recipe for computing the Frobenius structure on $\mathcal{A}_Z$.
Both of these rely on universal properties and can be used, by pullback, to determine the
structure of $A_Z(b,x)$ as a filtered $\phi$-module.
All of the aspects of the theory are then
computed explicitly for $X = X_{\sC}(13)$ in Section \ref{sec:Xs13}:
We first show that the rank of $\JacX(\Q)$ is exactly~3 and that
 $X$ has potentially good reduction. 
We then run our algorithm for the computation of the local $17$-adic height at $p=17$ for
two independent Tate classes coming from suitable
correspondences, leading to two quadratic Chabauty pairs.
As a consequence, we prove Theorem~\ref{thm:main_thm}.
The appendix contains a discussion of some concepts and results on unipotent neutral
Tannakian categories used throughout the paper.

\subsection*{Acknowledgements}
We are indebted to Minhyong Kim for proposing this project, and for his suggestions and encouragement.  Balakrishnan is supported in part by NSF grant DMS-1702196, the Clare Boothe Luce Professorship (Henry Luce Foundation), and Simons Foundation grant \#550023.  Tuitman is a Postdoctoral Researcher of the Fund for Scientific Research FWO - Vlaanderen. Vonk is supported by a CRM/ISM Postdoctoral Scholarship at McGill University. 

% -------------------------------------------------------------
\section{Chabauty-Kim and correspondences}
\label{sec:chab-col}
In this section we briefly recall the main ideas in the non-abelian Chabauty method of Kim \cite{Kim09}.
We then recall some results from \cite{BD16} which can be used to prove the finiteness of the set of rational points under certain assumptions. None of the results in this section are new. 
% -------------------------------------------------------------

\par In a letter to Faltings, Grothendieck proposed to study rational points on $X$
through the geometric \'etale fundamental group $\pi^{\et}_1(\overline{X},b)$ of $X$
with base point $b$.  More precisely, he conjectured that the map
\[ X(\Q) \longrightarrow
\HH^1\left(G_{\Q},\pi^{\et}_1(\overline{X},b)\right),\]
given by associating to $x \in X(\Q)$ the \'etale path torsor
$\pi^{\et}_1(\overline{X};b,x)$, should be an isomorphism. Unfortunately, there seems to a
lack of readily available extra structure on the target, which makes it difficult to study directly.
However, one can try instead to work with a suitable quotient of $\pi^{\et}_1(\overline{X},b)$, where ``suitable'' depends on the properties of the curve in question. Indeed, most techniques for studying $X(\Q)$ can be phrased in this language. Chabauty--Coleman, finite cover descent (see for instance~\cite{BS09}) and elliptic curve Chabauty~\cite{FW99,Bru03} rely on \textit{abelian} quotients, whereas Chabauty--Kim, discussed below, uses \textit{unipotent} quotients. Following~\cite{BD16} we will construct quadratic Chabauty pairs for a class of curves including $X_{\sC}(13)$ from the simplest non-abelian unipotent quotient when $r=g$ and $\rho>1$.

% -------------------------------------------------------------
\subsection{The Chabauty-Kim method}\label{sec:sel_rat} Via the Bloch-Kato exact sequence, there is an isomorphism $\Hf (G_p ,V)\simeq V_{\dR} /\Fil^0 $, where $V_{\dR}:= \HH^1 _{\dR} (X_{\Q _p })^* $, viewed as a filtered vector space with the dual filtration to the Hodge filtration. By definition, we obtain an isomorphism $V_{\dR}/\Fil^0 \simeq \HH^0 (X_{\Q _p },\Omega ^1 )^* $. It follows from \cite[3.10.1]{BK90} that 
this gives a commutative diagram
\begin{equation}\label{eqn:diagram-ab-funky}
\resizebox{8.5cm}{!}{
\begin{tikzpicture}[->,>=stealth',baseline=(current  bounding  box.center)]
 \node[] (X) {$X(\Q)$};
 \node[right of=X, node distance=3.7cm]  (Xp) {$X(\Q_p)$};   
 \node[below of=X, node distance=1.5cm]  (Hf) {$J(\Q )$};
 \node[right of=Hf,node distance=3.7cm] (Hfp) {$J(\Q _p )$};
 \node[right of=Hfp,node distance=3.5cm](Dieu) {$\HH^0 (X_{\Q _p },\Omega ^1 )^*  $};  
 \node[below of=Hf, node distance=1.5cm]  (Hff) {$\Hf (G_T ,V)$};
 \node[right of=Hff,node distance=3.7cm] (Hfpf) {$\Hf (G_p ,V)$};
 \node[right of=Hfpf,node distance=3.5cm](Dieuf) {$V_{\dR}/\Fil^0$};
 \path (X)  edge node[left]{\footnotesize } (Hf);
 \path (Xp) edge node[left]{\footnotesize } (Hfp);
 \path (X)  edge (Xp);
 \path (Hf) edge node[above]{} (Hfp);
\path (Hf) edge node[left]{ $\kappa $ } (Hff);
\path (Hfp) edge node[left ]{ $\kappa _p $ } (Hfpf);
\path (Dieu) edge node[left]{$\simeq $} (Dieuf) ;
 \path (Hfp) edge node[above]{$\log $}(Dieu);
 \path (Hff) edge node[above]{$\loc _p $} (Hfpf) ;
\path (Hfpf) edge node[above]{$\simeq$} (Dieuf);
\path (Xp)  edge node[above right]{$\AJb $} (Dieu);
\end{tikzpicture}}
\end{equation}
extending the Chabauty diagram~\eqref{eqn:diagram-ab}, where $\kappa$ and $\kappa_p$ map a point to its Kummer class. The idea of the Chabauty-Kim method is essentially that, if we cut out the middle row of this diagram, we obtain something amenable to generalisation. Namely, for each $n$ we obtain:
\begin{equation}\label{eqn:diagram-nonab}
\begin{tikzpicture}[->,>=stealth',baseline=(current  bounding  box.center)]
 \node[] (X) {$X(\Q)$};
 \node[right of=X, node distance=3.7cm]  (Xp) {$X(\Q_p)$};   
  \node[below of=X, node distance=1.5cm]  (Hf) {$\Sel(\U_n)$}; 
 \node[right of=Hf,node distance=3.7cm] (Hfp) {$\Hf(G_p,\U_n^{\et})$};
 \node[right of=Hfp,node distance=3.5cm](Dieu) {$\U^{\dR}_n/ \Fil^0 $.};

 \path (X)  edge node[left]{\footnotesize $j_n^{\et}$} (Hf);
 \path (Xp) edge node[left]{\footnotesize $j_{n,p}^{\et}$} (Hfp);
 \path (X)  edge (Xp);
 \path (Hf) edge node[above]{\footnotesize $\mathrm{loc}_{n,p}$} (Hfp);
 \path (Hfp) edge node[above]{\footnotesize $\D$} (Dieu);
 \path (Xp)  edge node[above right]{\footnotesize $j_n^{\dR}$} (Dieu);
\end{tikzpicture}
\end{equation}
The objects in this diagram will be explained in more detail in the next section. More generally, for any Galois stable quotient $\U$ of $\U_n ^{\et}$, we have a similar diagram involving $\U ^{\dR} :=\D_{\cris}(\U)$, and we will denote the corresponding maps by $j^{\et}_{\U},j^{\dR}_{\U}$ and $\loc_{\U,p}
$. We also define $\Sel(\U)$ exactly as for $\U_n^{\et}$. We then have that $X(\Q)$ is contained in the subset 
\[
  X(\Q_p)_{\U} := (j^{\et}_{\U,p})^{-1}\left(\loc_{\U,p} \Sel(\U)\right)  \subset X(\Q_p ).
\]
When $\U = \U_n$, we simply write $X(\Q_p)_{n}$ for this subset. We have
\[
    X(\Q)\subset\ldots \subset X(\Q_p)_n\subset X(\Q_p)_{n-1}\subset \ldots\subset X(\Q_p)_2\subset
    X(\Q_p)_1\subset X(\Q_p).
\]

% -------------------------------------------------------------
\subsection{Selmer varieties}\label{sec:sel_var}
The main reference for this subsection is~\cite{Kim09}, and more detailed definitions are collected in Appendix \ref{sec:Tannakian}. 

\par Let $\U_n ^{\et}:= \U_n ^{\et }(b)$ denote the maximal $n$-unipotent quotient of the $\Q _p $-\'etale fundamental group of $\overline{X}$ with base point $b$. This is a finite-dimensional unipotent group over $\Q _p $ with a continuous action of $\Gal (\overline{\Q }/\Q )$, which contains the maximal $n$-unipotent
pro-$p$ quotient of $\pi _1 ^{\et}(\overline{X},b)$ as a lattice. In this paper, we only need $n=1$ or $2$. We have $\U_1^{\et} = V := \HH^1 _{\et }(\overline{X},\Q _p )^* $, and $\U_2^{\et}$ is a central extension
\begin{equation}\label{eqn:U2ext}
1 \lra \Coker \left(\Q_p (1) \stackrel{\cup ^* }{\lra }\wedge ^2 V \right) \lra \U _2 \lra V \lra 1.
\end{equation}
We obtain likewise for any $x \in X(\Q)$ a path torsor $\U_n^{\et}(b,x)$, see \ref{subsec:background_lisse}. This gives rise to a map
\[
j^{\et}_n: X(\Q ) \ \lra \ \HH^1 (G_T,\U_n^{\et}), \ x \mapsto \U_n^{\et}(b,x),
\]
as well as local versions $j_{n,v}^{\et}$ for any finite place $v$. We obtain the commutative diagram
\[
\begin{tikzpicture}[->,>=stealth',baseline=(current  bounding  box.center)]
 \node[] (X) {$X(\Q)$};
 \node[right of=X, node distance=4.7cm]  (Xv) {$\prod_{v \in T}X(\Q_v)$};   
 \node[below of=X, node distance=1.7cm]  (Hf) {$\HH^1(G_T,\U_n^{\et})$};
 \node[right of=Hf,node distance=4.7cm] (Hfv) {$ \prod_{v \in T}\HH^1(G_v,\U_n^{\et}).$};
\path (X)  edge node[left]{\footnotesize $j_n^{\et}$} (Hf);
 \path (Xv) edge node[right]{\footnotesize $\prod j_{n,v}^{\et}$} (Hfv);
 \path (X)  edge (Xv);
 \path (Hf) edge node[above]{\footnotesize $\prod \mathrm{loc}_{n,v}$} (Hfv);
\end{tikzpicture}
\]

\par By~\cite{Ols11}, we have $j_{n,p}^{\et}(X(\Q_p)) \subset \Hf(G_p, \U_n^{\et})$, where
$\Hf(G_p, \U_n^{\et})$ is the subspace consisting of crystalline torsors~\cite[(3.7.2)]{BK90}. It is shown in~\cite{Kim05} that $\HH^1(G_p,\U_n^{\et})$ and $\HH^1(G_T,\U_n^{\et})$ are represented by algebraic varieties over $\Q_p$. By~\cite[p. 119]{Kim09}, $\Hf(G_p,\U_n^{\et})$ is represented by a subvariety of $\HH^1(G_p,\U_n^{\et})$, and the analogous statement holds for $\Hf(G_T,\U_n^{\et}) := \loc_{n,p}^{-1}\Hf(G_p, \U_n^{\et})$. 
Similar to classical Selmer groups, we add local conditions and define the \textit{Selmer variety} 
$\Sel(\U_n)$ to be the subvariety of $\Hf(G_T,\U_n^{\et})$ consisting of all classes
\[
c \in \bigcap_{v \in T_0}\loc^{-1}_{n,v}(j^{\et}_{n,v}(X(\Q_v)))
\]
whose projection to $\HH^1_f(G_T,V)$ lies in the image of $\JacX(\Q)\otimes \Q_p$, see
diagram~\eqref{eqn:diagram-ab-funky} above. See~\cite[Remark~2.3]{BD16} for a discussion how our definition relates to other definitions of Selmer varieties (and schemes) in the literature.

\begin{Remark}
As our main example $X=X_{\sC}(13)$ has potentially good reduction everywhere (see Corollary~\ref{cor:redn_13} below), the local conditions at $v \ne p$ are vacuous, so in this case the Selmer variety could be defined in a simpler way.
\end{Remark}

\par Finally, we define the objects on the right side of diagram \eqref{eqn:diagram-nonab}. Let $L$ be a field of characteristic zero. Deligne \cite[Section 10]{Del89} constructs the \textit{de Rham fundamental group}
\[
\pi_1^{\dR}(X_L ,b),
\]
a pro-unipotent group over $L$, defined as the Tannakian fundamental group of the category $\mathcal{C}^{\dR} (X_L )$ of unipotent vector bundles with flat connection on $X$ with respect to the fibre functor $b^* $, see \S \ref{subsec:background_connections}. Here and in what follows, when there is no risk of confusion, we drop the subscript $L$. Define $\U _n ^{\dR}(b)$ to be the maximal $n$-unipotent quotient of $\pi _1 ^{\dR} (X,b)$, along with path torsors $\U_n^{\dR}(b,x)$ for all $x \in X(L)$. These have the structure of filtered $\phi$-modules, as explained in \S \ref{subsec:nonabeliancomparison}.

\par Kim shows \cite{Kim09} that the isomorphism classes of $\U_n^{\dR}$-torsors in the category of filtered $\phi $-modules are naturally classified by the scheme $\U _n ^{\dR}/\Fil^0$. Hence, we get a tower of maps 
\[
j^{\dR}_n : X(\Q_p) \ \lra \ \U_n ^{\dR}/\Fil^0,\ x \mapsto \U_n^{\dR} (b,x).
\]

\subsection{Diophantine finiteness. } In \cite{Kim09}, Kim showed how the set-up of Chabauty's theorem may be generalised to diagram~\eqref{eqn:diagram-nonab}. The sets in the bottom row have the structure of $\Q _p $-points of algebraic varieties, in such a way that the morphisms $\loc _{n,p}$ and $\mathbf{D}$ are morphisms of schemes (and $\mathbf{D}$ is an isomorphism). The analogue of the analytic properties of $\AJb$ is the theorem that $j_n ^{\dR}$ has Zariski dense image~\cite[Theorem~1]{Kim09} and is given by a power series on each residue disk. The analogue of Chabauty's $r<g$ condition is non-density of the localisation map $\loc _{\U,p}$. As in the classical case, this gives the following theorem.
\begin{theorem}[Kim]\label{thm:Kim-finite}
  Suppose $\loc_{\U,p} $ is non-dominant. Then $X(\Q_p )_{\U}$ is finite.
\end{theorem}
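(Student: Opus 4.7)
The plan is to translate the finiteness question, via the commutative diagram \eqref{eqn:diagram-nonab}, from one about Galois cohomology to one about the de Rham side, where the target is an algebraic variety over $\Q_p$ and $j^{\dR}_{\U}$ is locally analytic. The commutativity of \eqref{eqn:diagram-nonab} together with the fact that $\mathbf{D}$ is an isomorphism of schemes allows us to rewrite
\[
  X(\Q_p)_{\U} = (j^{\dR}_{\U})^{-1}\!\left(\mathbf{D}\circ \loc_{\U,p}(\Sel(\U))\right).
\]

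Next I would use the hypothesis that $\loc_{\U,p}$ is non-dominant. Since $\Sel(\U)$ and $\Hf(G_p,\U^{\et})$ are algebraic varieties over $\Q_p$, non-dominance says that the Zariski closure $\overline{\loc_{\U,p}(\Sel(\U))}$ is a proper closed subvariety $Z \subset \Hf(G_p,\U^{\et})$. Applying the isomorphism $\mathbf{D}$ of $\Q_p$-varieties, $\mathbf{D}(Z)$ is a proper Zariski closed subvariety of $\U^{\dR}/\Fil^0$, and we have
\[
  X(\Q_p)_{\U} \subset (j^{\dR}_{\U})^{-1}(\mathbf{D}(Z)).
\]
Hence there exists a nonzero regular function $f$ on $\U^{\dR}/\Fil^0$ that vanishes on $\mathbf{D}(Z)$, and therefore $f$ vanishes on the set $X(\Q_p)_{\U}$.

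The final step combines the two crucial analytic properties of $j^{\dR}_{\U}$. By Kim's Zariski-density theorem \cite{Kim09}, the image of $j^{\dR}_{\U}$ is Zariski dense in $\U^{\dR}/\Fil^0$, so $f \circ j^{\dR}_{\U}$ is not identically zero on $X(\Q_p)$. Restricting to any residue disk $]x[ \subset X(\Q_p)$, the map $j^{\dR}_{\U}$ is given by a convergent $p$-adic power series in a uniformizing parameter, so $f \circ j^{\dR}_{\U}$ becomes a nonzero $p$-adic power series on that disk, which by the Weierstrass preparation theorem has only finitely many zeros. Since $X$ has good reduction at $p$ and hence only finitely many residue disks, we conclude that the zero set of $f \circ j^{\dR}_{\U}$ on $X(\Q_p)$, and in particular the subset $X(\Q_p)_{\U}$, is finite.

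The main conceptual obstacle, which is precisely where Kim's work is essential, is not the finiteness argument itself (which formally mirrors Chabauty--Coleman once the non-dominance is in hand) but rather the input that $\Hf(G_p,\U^{\et})$ and $\Sel(\U)$ really are algebraic varieties so that the phrase \emph{non-dominant} makes sense, together with the Zariski density of $j^{\dR}_{\U}$. Granting the results cited from \cite{Kim05,Kim09,Ols11,BK90}, the proof reduces to the elementary power-series argument above.
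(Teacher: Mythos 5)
Your route is the same one the paper takes: the paper does not reprove this statement but attributes it to Kim, and its justification is precisely the translation you carry out — the bottom row of diagram \eqref{eqn:diagram-nonab} consists of $\Q_p$-points of algebraic varieties, $\loc_{\U,p}$ and $\D$ are morphisms of schemes with $\D$ an isomorphism, and finiteness then follows ``as in the classical case'' from the analytic properties of $j^{\dR}_{\U}$. Your reduction to the vanishing locus of a single nonzero regular function $f$ on $\U^{\dR}/\Fil^0$ is legitimate (this quotient is an affine space, so a proper closed subvariety is contained in the zero set of a nonzero global function), and the zero-counting via Weierstrass preparation on each of the finitely many residue disks is the intended endgame.

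There is, however, a genuine gap at the last analytic step. From global Zariski density of the image of $j^{\dR}_{\U}$ you may conclude only that $f\circ j^{\dR}_{\U}$ is not identically zero on $X(\Q_p)$; this does not imply that its restriction to \emph{every} residue disk is a nonzero power series. A priori $f\circ j^{\dR}_{\U}$ could vanish identically on some disk $]x[$ while being nonzero on others, and then your argument gives no control whatsoever over $X(\Q_p)_{\U}\cap\, ]x[$, which is exactly the set you need to bound. What is actually required — and what Kim's proof supplies, and what the paper itself invokes in the abelian case in \S\ref{chab-col} and in property (a) of \S\ref{sec:qc_pair} — is the residue-disk form of density: on each residue disk the map $j^{\dR}_{\U}$ has Zariski dense image, because its coordinates are iterated Coleman integrals which are algebraically independent as functions on the disk. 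Equivalently, one may appeal to the identity principle for Coleman functions (uniqueness of Frobenius-equivariant analytic continuation): a Coleman function vanishing identically on one residue disk vanishes identically, so global nonvanishing does propagate to every disk. With either of these inputs inserted, your Weierstrass-preparation count per disk together with the finiteness of $X(\F_p)$ completes the proof; without one of them, the step from ``not identically zero on $X(\Q_p)$'' to ``nonzero on each disk'' does not follow.
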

Kim~\cite[\S3]{Kim09} showed that non-density of $\loc _{\U ,p}$
(and hence finiteness of $X(\Q _p )_{\U} $) is implied by various conjectures on the size of unramified Galois cohomology groups (for example by the Beilinson--Bloch--Kato conjectures) but is hard to prove unconditionally. One instance where the relevant Galois cohomology groups can be understood by Iwasawa theoretic methods is when the Jacobian of $X$ has CM. This was used by Coates and Kim \cite{CK10} to prove eventual finiteness of weakly global points. Recently, Ellenberg and Hast \cite{EH} prove, using similar techniques, that the class of curves admitting an \'etale cover all of whose twists have eventually finite sets of weakly global points includes all solvable Galois covers of $\PP^1$. In this article the Galois cohomological input needed is of a much more elementary nature. The following result was proved by Balakrishnan--Dogra \cite[Lemma~3]{BD16}.

\begin{lemma}[Balakrishnan--Dogra]\label{lemma:NS_fin}
  Suppose 
  \[
  r <g+\rho-1.
\] 
  Then $X(\Q_p)_2 $ is finite.
\end{lemma}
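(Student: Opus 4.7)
The plan is to apply Theorem~\ref{thm:Kim-finite} to a Galois-stable quotient $\U$ of $\U_2^{\et}$ built from the N\'eron--Severi group. Since $X(\Q_p)_{\U} \supset X(\Q_p)_2$ for any quotient $\U$ of $\U_2^{\et}$, it suffices to exhibit a choice of $\U$ for which $\loc_{\U,p}$ is non-dominant, and I would do this by a straightforward dimension count in the two variables $r$ and $\rho$.

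For the construction of $\U$, recall the central extension
\[ 1 \to N \to \U_2^{\et} \to V \to 1, \qquad N = \Coker\bigl(\Q_p(1) \xrightarrow{\cup^\ast} \wedge^2 V\bigr). \]
The $p$-adic Lefschetz $(1,1)$ theorem gives an embedding $\NS(J_{\overline\Q}) \otimes \Q_p \hookrightarrow \HH^2(\overline{J},\Q_p(1)) = \wedge^2 V^\ast(1)$, so that each class in $\NS(J)\otimes \Q_p$ determines a $G_\Q$-equivariant map $\wedge^2 V \to \Q_p(1)$. The class of the theta polarization pairs non-trivially with $\Q_p(1) \subset \wedge^2 V$, and its complement in $\NS(J)\otimes \Q_p$ produces $\rho-1$ Galois-equivariant homomorphisms $\wedge^2 V \to \Q_p(1)$ that vanish on the image of $\Q_p(1)$ and hence descend to a surjection $N \twoheadrightarrow \Q_p(1)^{\rho-1}$. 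Let $\U$ be the pushout of $\U_2^{\et}$ along this surjection, sitting in
\[ 1 \to \Q_p(1)^{\rho-1} \to \U \to V \to 1. \]

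For the dimension count, on the local side Bloch--Kato gives $\dim \Hf(G_p,V) = g$ and $\dim \Hf(G_p,\Q_p(1)) = 1$; combining these via the extension above yields $\dim \Hf(G_p,\U^{\et}) = g + (\rho-1)$. On the global side, $\Sel(\U)$ projects into the image of $J(\Q)\otimes \Q_p$ in $\Hf(G_T,V)$, which has dimension at most $r$, and the fiber over any point is a torsor under $\Hf(G_T,\Q_p(1)^{\rho-1})$. Since $\Z^\ast$ has rank $0$, Bloch--Kato yields $\Hf(G_T,\Q_p(1)) = 0$, so the fibers are $0$-dimensional and $\dim \Sel(\U) \leq r$. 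Under the hypothesis $r < g+\rho-1$ this forces $\loc_{\U,p}$ to be non-dominant, and Theorem~\ref{thm:Kim-finite} then gives finiteness of $X(\Q_p)_{\U}$, hence of $X(\Q_p)_2$.

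The main obstacle is Step~1: extracting a Galois-equivariant quotient of $N$ isomorphic to $\Q_p(1)^{\rho-1}$. This uses the $p$-adic Lefschetz $(1,1)$ theorem to realize the full N\'eron--Severi rank as Tate classes, together with a careful check that the theta class exactly accounts for the subrepresentation $\Q_p(1) \subset \wedge^2 V$ being killed, so that one obtains the desired central quotient of the correct rank $\rho-1$ rather than a smaller one. The Selmer-side bound in Step~2 is standard once one invokes $\Hf(G_T,\Q_p(1))=0$.
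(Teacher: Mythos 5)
Your overall strategy is exactly the paper's: push out the extension $1\to\Coker(\Q_p(1)\xrightarrow{\cup^*}\wedge^2V)\to\U_2^{\et}\to V\to1$ along $\rho-1$ Galois-equivariant functionals coming from $\NS(\JacX)$ (complementary to the polarization class) to get $1\to\Q_p(1)^{\rho-1}\to\U\to V\to1$, then compare $\dim\Sel(\U)\le r$ with $\dim\Hf(G_p,\U)=g+\rho-1$ and conclude by Theorem~\ref{thm:Kim-finite}. The construction of $\U$ and the local computation are fine (one cosmetic point: the embedding $\NS(\JacX)\otimes\Q_p\hookrightarrow\wedge^2V^*(1)$ is just the cycle class map; the $p$-adic Lefschetz $(1,1)$ theorem is needed only for the converse direction, as in Lemma~\ref{lemma:general_tate_class}).

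The gap is in your global bound. With this paper's conventions, $\Hf(G_T,\Q_p(1))$ imposes only unramifiedness outside $T$ and crystallinity at $p$, with no condition at the bad primes $v\in T_0$; by Kummer theory it is identified with $\{x\in\Z[1/T]^\times\otimes\Q_p:\ v_p(x)=0\}$, which has dimension $\#T_0$, not $0$ (for $X=X_{\sC}(13)$ itself, $T_0=\{13\}$ and this space is one-dimensional). The identity $\Hf\simeq\Z^\times\otimes\Q_p=0$ that you invoke requires triviality (or at least unramifiedness) of the localizations at every $v\in T_0$, which is not part of $\Hf(G_T,-)$ and which your argument never establishes; so "the fibers are $0$-dimensional, hence $\dim\Sel(\U)\le r$" does not follow as written, and a priori the fibers could contribute up to $(\rho-1)\#T_0$ extra dimensions. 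The repair is precisely the local conditions at $T_0$ built into the definition of $\Sel(\U)$: for $v\in T_0$ the localization must lie in $j^{\et}_{2,v}(X(\Q_v))$, which is finite by Kim--Tamagawa (Theorem~\ref{thm:KT}), so the fiber of $\Sel(\U)$ over a point of $\Hf(G_T,V)$ is contained in finitely many translates of the group of classes crystalline at $p$ and trivial in $H^1(G_v,\Q_p(1))$ for all $v\in T_0$, and that group is indeed $\Z^\times\otimes\Q_p=0$. With this correction --- which is how the proof in \cite{BD16} summarized in the paper proceeds --- your dimension count and the conclusion go through.
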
 
The idea of the proof of this lemma is as follows. As the map $\loc_{2,p}$ is algebraic,
it suffices by Theorem \ref{thm:Kim-finite} to construct a quotient $\U$ of $\U _2 $ for
which $\dim \Hf (G_T , \U)<\dim \Hf (G_p ,\U )$, since $X(\Q_p)_2 \subset X(\Q_p)_{\U}$. We can push out~\eqref{eqn:U2ext} to construct a quotient $\U$ of $\U_2$ which is an extension
\[ 1 \lra \Q_p(1)^{\oplus (\rho-1)} \lra \U \lra V\lra 1. \]
Using the six-term exact sequence in nonabelian cohomology and some $p$-adic Hodge theory, one shows $\dim \Hf(G_T,\U) \leq r$, whereas $\dim \Hf(G_p ,\U) = g+\rho-1$.

% -------------------------------------------------------------
\subsection{Quotients of fundamental groups via correspondences }\label{sec:quotients}
Lemma \ref{lemma:NS_fin}, as well as the results of \cite{CK10,EH} where finiteness is proved unconditionally in certain cases, say nothing about how to actually {\em determine} $X(\Q_p)_{2}$ or $X(\Q)$ in practice. In~\cite{BD16, BD17}, the two first-named authors construct a suitable intermediate quotient $\U$ between $\U_2$ and $V$ that is non-abelian, but small enough to make explicit computations possible. 
  Working with such quotients $\U$, rather than directly with $\U_2$, may be thought of as a
non-abelian analogue of passing to a nice quotient of the Jacobian, 
as, for instance, in the work of Mazur~\cite{Maz77} and Merel~\cite{Mer96}. 
Lemma \ref{lemma:NS_fin} was deduced
from the finiteness of such a set $X(\Q_p)_{\U}$, and it is these sets which will be
computed explicitly in what follows. 

\par Denoting by $\tau$ the canonical involution $(x_1,x_2) \mapsto (x_2, x_1)$ on $X \times X$, we say that a
correspondence $Z \in \Pic(X \times X)$ is \textit{symmetric} if there are $Z_1, Z_2 \in \Pic(X)$ such that 
\[
\tau_\ast Z = Z + \pi_1^\ast Z_1 + \pi_2^\ast Z_2,
\]
where $\pi_1, \pi_2$ are the canonical projections $X \times X \to X$.

\par Now choose any Weil cohomology theory $\HH^*(X)$ with coefficient field $L$ of characteristic
zero. Via the cycle class map, a correspondence $Z$ on $X$ gives rise to a class in $\HH^2(X\times
X)(1)$, and hence a class $\xi_Z \in \HH^1(X) \otimes \HH^1(X)(1) \simeq \End \HH^1(X)$ via the K\"unneth decomposition. 
We say that $Z$ is a \textit{\nice} 
correspondence if $Z$ is nontrivial and symmetric and if $\xi_Z$ has trace~0. 
\begin{lemma}\label{lemma:kunneth} Suppose that $\JacX$ is absolutely simple 
 and let $Z \in \Pic(X \times
  X)$ be a symmetric correspondence.
  Then the class associated to $Z$ lies in the subspace 
\[\wedge^2 \HH^1(X) (1) \subset \HH^1(X) \otimes \HH^1(X)(1).\] 
Moreover, $Z$ is \nice if and only if 
the image of this class in $\HH^2(X)(1)$ under the cup product is zero.
\begin{proof}
  A correspondence $Z$ is symmetric if and only if the endomorphism of $\JacX$ induced by
  $Z$ is fixed by the Rosati involution. This follows from~\cite[Proposition~11.5.3]{BL04}
  (although the
  statement there assumes that $X$ is defined over the complex numbers, the proof works
  over any ground field).
  Since by~\cite[\S IV.20]{Mum70} the subspace of elements of $\End(\JacX)\otimes \Q$ fixed by the Rosati involution is isomorphic to $\NS(\JacX)\otimes \Q$, we find that $Z$ induces an element of $\NS(\JacX)$. Hence we obtain that the class associated to $Z$ lies in $\HH^2(\JacX) (1) = \wedge ^2 \HH^1(X) (1).$

The second statement is a consequence of the observation that the trace of $\xi_Z$ as a
linear operator on $\HH^1(X)$ is equal to the composite of the cup product and the trace
isomorphism $$\HH^1(X) \otimes \HH^1(X)(1) \rightarrow \HH^2(X)(1) \simeq L.$$ 
\end{proof}
\end{lemma}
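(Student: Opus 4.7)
The plan is to deduce both statements from a classical fact: the Rosati involution on $\End(\JacX) \otimes \Q$ has fixed subspace canonically isomorphic to $\NS(\JacX) \otimes \Q$. Everything else is bookkeeping with the K\"unneth decomposition and Poincar\'e duality.

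First, I would translate the condition that $Z$ is symmetric into a property of the induced endomorphism of the Jacobian. A correspondence on $X \times X$ defines an endomorphism of $\JacX$ via pullback-pushforward, and correspondences of the form $\pi_1^\ast Z_1 + \pi_2^\ast Z_2$ are precisely those inducing the zero endomorphism. Since the transpose correspondence $\tau_\ast Z$ induces the Rosati-adjoint of the endomorphism induced by $Z$, the symmetry hypothesis $\tau_\ast Z = Z + \pi_1^\ast Z_1 + \pi_2^\ast Z_2$ becomes the statement that the endomorphism $\xi_Z$ of $\JacX$ is fixed by Rosati. Applying the classical identification recalled above (see e.g.~\cite[\S IV.20]{Mum70}), this means $\xi_Z$ comes from a class in $\NS(\JacX) \otimes \Q$.

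Second, I would translate this back into cohomology. Under the cycle class map, a class in $\NS(\JacX)$ lands in $\HH^2(\JacX)(1)$, which by the standard K\"unneth computation for abelian varieties equals $\wedge^2 \HH^1(\JacX)(1)$. Pulling back along the Abel--Jacobi map $X \hookrightarrow \JacX$ (which induces an isomorphism on $\HH^1$) transports this to a class in $\wedge^2 \HH^1(X)(1)$, as required for the first assertion. For the second assertion, interpret $\xi_Z \in \HH^1(X) \otimes \HH^1(X)(1)$ as an endomorphism of $\HH^1(X)$ via Poincar\'e duality; then the trace of this endomorphism is computed by contracting the two tensor factors, which is exactly the composite of the cup product with the trace isomorphism $\HH^2(X)(1) \xrightarrow{\sim} L$. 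Thus $\operatorname{tr}(\xi_Z) = 0$ is equivalent to the cup product image vanishing.

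The step that requires the most care is the first one: making the equivalence between symmetry of the correspondence and Rosati-symmetry of $\xi_Z$ precise, including the fact that the map from symmetric correspondences modulo the trivial part to Rosati-fixed endomorphisms is a well-defined bijection. Once this is in place, the rest is formal. The hypothesis that $\JacX$ is absolutely simple does not seem to be needed for the argument itself, but it ensures that the various rational-equivalence and numerical-equivalence subtleties one might otherwise worry about do not obstruct the identification $\NS(\JacX) \otimes \Q \cong (\End(\JacX) \otimes \Q)^{\text{Ros}}$ in the form used.
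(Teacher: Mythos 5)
Your argument is essentially the paper's own proof: you identify symmetry of $Z$ with Rosati-invariance of the induced endomorphism (the paper cites \cite[Proposition~11.5.3]{BL04} for exactly the transpose-equals-Rosati-adjoint step you spell out), invoke \cite[\S IV.20]{Mum70} to land in $\NS(\JacX)\otimes\Q$ and hence in $\HH^2(\JacX)(1)=\wedge^2\HH^1(X)(1)$, and prove the second assertion by observing that the trace of $\xi_Z$ is the composite of cup product with the trace isomorphism. Your closing remark is also consistent with the paper, whose proof likewise makes no visible use of the absolute simplicity hypothesis.
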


\par We now define quotients $\U_Z$ of $\U_2$ attached to the choice of a \nice
correspondence $Z$ on $X$. These are morally responsible for the proof of Lemma
\ref{lemma:NS_fin}, and play a crucial role in our effective determination of $X_{\sC}(13)(\Q)$ 
By Lemma~\ref{lemma:kunneth}, if $Z$ is a \nice correspondence on $X$, 
we may take the pushout of \eqref{eqn:U2ext} by the dual of the Tate class
\[ \Q_p(-1) \lra \wedge^2 \HH^1_{\et}(X_{\overline{\Q}},\Q_p)\] 
associated to $Z$ to obtain an extension
  \[
1 \to \Q_p(1) \to \U _Z \to V \to 1.
\]

\begin{Remark}\label{remark:general_tate_class}
In the explicit computations of this paper, we will never work with \nice correspondences directly, but rather with their images in $\HH^1_{\dR}(X)\otimes\HH^1_{\dR}(X)(1)$.  
In fact, we can carry out these computations for quotients corresponding in the same
way to more general Tate  classes  $\HH^1_{\dR}(X)\otimes\HH^1_{\dR}(X)$ which come
from a \nice $Z \in \Pic(X\times X)\otimes
\Q_p$, for which we extend the notion of a \nice correspondence in the obvious way.
For notational convenience, we shall denote a Tate class obtained in this way by $Z$ as
well.
\end{Remark}

% -------------------------------------------------------------

\section{Height functions on the Selmer variety}\label{sec:heights}

In this section we recall Nekov\'a\v r's theory of $p$-adic height functions
\cite{Nek93} and summarise some results of \cite{BD16} relating 
$p$-adic heights to Selmer varieties and leading to a construction of quadratic
Chabauty pairs when $r=g$ and $\rho>1$.

% -------------------------------------------------------------
\subsection{Nekov\'a\v r's $p$-adic height functions. }We start by recalling some definitions from the theory of $p$-adic heights due to Nekov\'a\v r \cite{Nek93}. For a wide class of $p$-adic Galois representations $V$, Nekov\'a\v r \cite[Section 2]{Nek93} constructs a continuous bilinear pairing
\begin{equation}\label{eqn:Nekov\'a\v rGlobal}
h: \Hf(G_T,V) \times \Hf(G_T,V^*(1)) \lra \Q_p.
\end{equation}
This global height pairing depends only on the choice of
\begin{itemize}
\item a continuous id\`ele class character $\chi: \mathbf{A}_{\Q}^\times / \Q^{\times} \lra \Q_p$,
\item a splitting $s: V_{\dR}/\Fil^0 V_{\dR} \lra V_{\dR}$ of the Hodge
  filtration, where $V_{\dR} = \D_{\cris}(V)$. 
\end{itemize}
Henceforth, we fix such choices once and for all. We will only consider $V =
\HH^1_{\et}(X_{\overline{\Q}},\Q_p)^*$, and specialise immediately to this case for
simplicity, so that $V_{\dR} = \HH^1 _{\dR} (X_{\Q _p })^*$.

\par The global $p$-adic height pairing $h$ decomposes as the sum of local pairings $h_v$, for every non-archimedean place $v$ of $\Q$, as explained in \cite[Section 4]{Nek93}. As in the classical decomposition of the height pairing, the local height functions do not define a bilinear pairing, but rather a bi-additive function on a set of equivalence classes of mixed extensions, which we now explain. In the particular example of $X = X_{\mathrm{s}}(13)$, only the local height $h_p$ is of importance. Recall that $T = T_0 \cup \{p \}$, where $T_0$ is the set of primes of bad reduction of $X$.

\begin{Definition}\label{def:mixed-ext} Let $G$ be the Galois group $G_T$ or $G_v$, for $v \in T$. Define $\mathcal{M}(G;\Q_p ,V,\Q_p (1))$ to be the category of \textit{mixed extensions} with graded pieces $\Q _p ,V,$ and $\Q_p (1)$, whose objects are tuples $\left(M,M_{\bullet},\psi_{\bullet}\right)$ where 
\begin{itemize}
\item $M$ is a $\Q_p $-representation of $G$,
\item $M_{\bullet}$ is a $G$-stable filtration
$M=M_0 \supset M_1 \supset M_2 \supset M_3 =0$,
\item $\psi _{\bullet} $ are isomorphisms of $G_*$-representations
\[ \left\{ \begin{array}{lll}
\psi _0 :M_0 /M_1 & \stackrel{\sim }{\lra } & \Q_p, \\
\psi _1 :M_1 /M_2 & \stackrel{\sim }{\lra } & V, \\
\psi _2 :M_2 /M_3 & \stackrel{\sim }{\lra } & \Q_p (1), \\
\end{array}\right.\]
\end{itemize}
and whose morphisms 
\[
(M,M_{\bullet},\psi_{\bullet}) \ \lra \ (M',M'_{\bullet},\psi_{\bullet}')
\]
are morphisms $M \to M'$ of representations which respect the filtrations and commute with the isomorphisms $\psi _i $ and $\psi _i '$. Let $\mathrm{M} (G;\Q_p ,V,\Q_p (1))$ denote the set of isomorphism classes of objects. 

\par When $G=G_T$ or $G_p $, we denote by $\cMf (G;\Q _p ,V,\Q _p (1))$ the full subcategory of $\mathcal{M}(G;\Q _p ,V,\Q _p (1))$ consisting of crystalline representations, and similarly define $\Mf (G;\Q _p ,V,\Q _p (1))$.
\end{Definition}

\par The set $\Mf(G_T ;\Q_p ,V,\Q_p (1))$ is equipped with two natural surjective homomorphisms
\[
\begin{array}{lllll}
\pi_1: & \Mf (G_T ;\Q_p ,V,\Q_p (1)) &\lra & \Hf (G_T ,V) \ , & M \mapsto [M/M_2],\\
\pi_2: & \Mf (G_T ;\Q_p ,V,\Q_p (1)) &\lra & \Ext^1 _{G_T ,f}(V,\Q_p (1)) \ , & M \mapsto [M_1]. \\ 
\end{array} 
\]
(and similarly for the  groups $G_v$, for $v \in T_0$, and $G_p$).
Throughout this paper, we implicitly identify $\Hf (G_T ,V)$ and $\Hf(G_T,V^*(1))$ with the groups $\Ext^1 _{G_T ,f}(\Q_p,V)$ and $\Ext^1 _{G_T ,f}(V,\Q_p (1))$ respectively, where the subscript $f$ denotes those extensions which are crystalline at $p$. Via Poincar\'e duality, we may view $\pi _1 (M)$ and $\pi _2 (M)$ as both being elements of $H^1 _f (G_T ,V)$. We say $M$ is a \textit{mixed extension of $\pi _1 (M)$ and $\pi _2 (M)$.}

\par Nekov\'a\v r's global height pairing \eqref{eqn:Nekov\'a\v rGlobal} decomposes as a sum of local heights in the following sense. There exists for every finite place $v\neq p$ a function
\[
h_v : \mathrm{M}(G_v ;\Q _p ,V,\Q_p (1)) \ \lra \ \Q_p
\]
and a function
\[
h_p : \Mf (G_p ;\Q _p ,V,\Q_p (1)) \ \lra \ \Q_p
\]
such that $h = \sum_v h_v$, where $h$ is viewed by abuse of notation as the composite function
\[ 
\Mf (G_T ;\Q _p ,V,\Q _p (1)) \xrightarrow{(\pi_1,\pi_2)} \Hf (G_T ,V)\times \Ext^1 _{G_T ,f}(V,\Q_p (1)) \stackrel{h}{\lra} \Q_p. 
\]
We note that unlike the global height $h$, the local heights $h_v$ do not factor through the map analogous to $(\pi _1,\pi _2)$. We now define the functions $h_v$ for $v\neq p$ and $v=p$, and refer to Nekov\'a\v r \cite[Section 4]{Nek93} for more details. 

\subsection{The local height away from $p$. }We briefly recall the definition of the local height away from $p$. The construction rests on the fact that, by the weight-monodromy conjecture for curves proved by Raynaud \cite{Ray94}, we have 
$
\HH^1(G_v ,V) = 0.
$
This implies that a mixed extension $M$ in $\Mf(G_v ;\Q _p ,V,\Q _p (1))$ splits as $M \simeq V \oplus N$, where $N$ is an extension of $\Q_p$ by $\Q_p(1)$. We obtain a class $[N] \in \HH^1(G_v,\Q_p(1))$. The local component $\chi_v: \Q_v^{\times} \lra \Q_p$ gives a map
\[ 
\chi_v : \HH^1(G_v,\Q_p(1)) \simeq \Q_v^{\times} \widehat{\otimes} \Q_p \lra \Q_p,
\]
where the isomorphism is provided by Kummer theory. The local height at $v$ is now defined as 
\[
h_v(M) := \chi_v([N]).
\] 
When $M$ is unramified at $v$, the local height automatically vanishes. The same is true when $V$ becomes unramified over a finite extension of $\Q_v$:
\begin{lemma}\label{lemma:height-away-p}
  Let $v \neq p$, and let $M \in \mathrm{M}(G_v ;\Q _p ,V,\Q _p (1))$ be a mixed extension. Assume that $M$ is potentially unramified, then $h_v(M) = 0$.
\begin{proof}
Assume that $K_v / \Q_v$ is a finite Galois extension such that the action of $G_{K_v}$ on $M$ is unramified. The inflation-restriction sequence attached to this subgroup gives an exact sequence
\[ 0 \lra \HH^1(A, \Q_p(1)^{G_{K_v}}) \lra \HH^1(G_v, \Q_p(1)) \stackrel{\scriptsize \mathrm{res}}{\lra} \HH^1(G_{K_v}, \Q_p(1)),\]
where $A = G_v / G_{K_v}$ is a finite group. Write $M \simeq V \oplus N$, where $N$ is an extension of $\Q_p$ by $\Q_p(1)$. Then by assumption, we have that the class of $N$ in $\HH^1(G_{K_v}, \Q_p(1))$ is trivial. On the other hand, the restriction map $\mathrm{res}$ is injective, since $\Q_p(1)^{G_{K_v}}=0$. This shows that the class of $N$ in $\HH^1(G_v, \Q_p(1))$ is trivial, and in particular that $h_v(M)=0$.
\end{proof}
\end{lemma}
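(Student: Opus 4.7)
The plan is to show that the class $[N] \in \HH^1(G_v, \Q_p(1))$ which governs $h_v(M) = \chi_v([N])$ is already zero in $\HH^1(G_v, \Q_p(1))$; this is stronger than strictly needed but is the natural route through an inflation–restriction argument.

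The first step is to choose a finite Galois extension $K_v/\Q_v$ over which $M$ becomes unramified and to observe that the decomposition $M \simeq V \oplus N$ of $G_v$-representations, being $G_v$-equivariant, restricts to a decomposition of $G_{K_v}$-representations; hence $N|_{G_{K_v}}$ is a direct summand of an unramified representation and is itself unramified. Therefore $\mathrm{res}([N]) \in \HH^1(G_{K_v}, \Q_p(1))$ lies in the unramified subgroup. The second step is to identify this unramified subgroup with $\Q_p(1)/(\mathrm{Frob}_{K_v} - 1)\Q_p(1)$: for $v \neq p$ the cyclotomic character is unramified at $v$, so $I_{K_v}$ acts trivially on $\Q_p(1)$ and $\mathrm{Frob}_{K_v}$ acts by the residue cardinality $q_{K_v}$. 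Since $q_{K_v} \neq 1$, the map $\mathrm{Frob}_{K_v} - 1$ is invertible on $\Q_p(1)$ and this quotient vanishes, forcing $\mathrm{res}([N]) = 0$.

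The third step is to invoke the inflation–restriction sequence for the finite quotient $A = G_v/G_{K_v}$,
\[ 0 \lra \HH^1(A, \Q_p(1)^{G_{K_v}}) \lra \HH^1(G_v, \Q_p(1)) \stackrel{\mathrm{res}}{\lra} \HH^1(G_{K_v}, \Q_p(1)). \]
The same Frobenius-eigenvalue computation gives $\Q_p(1)^{G_{K_v}} = 0$, so the left term vanishes and $\mathrm{res}$ is injective. Combined with $\mathrm{res}([N]) = 0$, this yields $[N] = 0$ in $\HH^1(G_v, \Q_p(1))$, and hence $h_v(M) = 0$.

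The only point requiring real care is the passage from \emph{$M$ unramified over $K_v$} to \emph{$N$ unramified over $K_v$}; the cleanest justification is that $N$ arises as a $G_v$-equivariant direct summand of $M$ (using the canonical splitting produced by $\HH^1(G_v, V) = 0$), and unramified-ness is inherited by direct summands upon restriction. Everything else reduces to the observation that, away from $p$, the cyclotomic character carries no nontrivial invariants and no nontrivial unramified cohomology.
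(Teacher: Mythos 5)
Your proof is correct and follows essentially the same route as the paper: the splitting $M \simeq V \oplus N$ coming from $\HH^1(G_v,V)=0$, the inflation--restriction sequence for $G_{K_v} \trianglelefteq G_v$, and injectivity of restriction from $\Q_p(1)^{G_{K_v}}=0$. The only difference is that you spell out why $\mathrm{res}([N])$ vanishes (unramified classes in $\HH^1(G_{K_v},\Q_p(1))$ are killed because $\mathrm{Frob}_{K_v}-1$ is invertible on $\Q_p(1)$), a step the paper compresses into ``by assumption''; this is a welcome clarification rather than a divergence.
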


\subsection{The local height at $v = p$. }
\label{subsec:local_height_p}
Given a mixed extension $M_{\et} \in \Mf(G_p ;\Q _p ,V,\Q _p (1))$, the definition of its local height at $p$ will be given in terms of its image $M_{\dR}$ under Fontaine's $\D_{\cris}$ functor. The module $M_{\dR}$ inherits a structure similar to that of $M_{\et}$, which we formalise in the following definition.

\begin{Definition}\label{def:fil_phi_mod}
A filtered $\phi $-module is a finite-dimensional $\Q _p $-vector space $W$ equipped with an exhaustive and separated decreasing filtration $\Fil^i$ and an automorphism $\phi =\phi _W $.
\end{Definition}

Really, we are only interested in \textit{admissible} filtered $\phi $-modules, but since we will only consider iterated extensions of filtered $\phi $-modules which are admissible, and any extension of two admissible filtered $\phi $-modules is admissible, we will ignore this distinction. 

\par For any filtered $\phi $-module $W$ for which $W^{\phi =1}=0$, we have 
\begin{equation}\label{eqn:exp-map}
\Ext ^1 _{\fil ,\phi }(\Q _p ,W) \simeq W/\Fil^0,
\end{equation}
see for instance \cite[\S 3.1]{Nek93}. Explicitly, the map from the Ext group to $W/\Fil^0 $ is as follows. Given an extension
\[
0 \lra W \lra E \lra \Q_p \lra 0 ,
\]
one chooses a splitting $s^\phi :\Q _p \to E $ which is $\phi $-equivariant, and a splitting $s^{\Fil}$ which respects the filtration. Their difference gives an element of $W$. Since $W^{\phi =1}=0$, the splitting $s^\phi $ is unique, whereas $s^{\Fil}$ is only determined up to an element of $\Fil^0 W$. Hence the element $s^\phi -s^{\Fil} \in W \mod \Fil^0 $ is independent of choices. We leave the construction of the inverse map to the reader. 

\begin{Definition}\label{def:mixed-ext-filphi}Let $V$ be as above, and $V_{\dR} = \D_{\cris}(V)$. Define $\mathcal{M}_{\fil,\phi}(\Q_p ,V,\Q_p (1))$ to be the category of \textit{mixed extensions of filtered $\phi$-modules}, whose objects are tuples $\left(M,M_{\bullet},\psi_{\bullet}\right)$ where 
\begin{itemize}
\item $M$ is a filtered $\phi$-module,
\item $M_{\bullet}$ is a filtration by sub-filtered $\phi$-modules
$M=M_0 \supset M_1 \supset M_2 \supset M_3 =0$,
\item $\psi_{\bullet}$ are isomorphisms of filtered $\phi$-modules
\[ \left\{ \begin{array}{lll}
\psi _0 :M_0 /M_1 & \stackrel{\sim }{\lra } & \Q_p \\
\psi _1 :M_1 /M_2 & \stackrel{\sim }{\lra } & V_{\dR} \\
\psi _2 :M_2 /M_3 & \stackrel{\sim }{\lra } & \Q_p (1) \\
\end{array}\right.\]
\end{itemize}
and whose morphisms are morphisms of filtered $\phi$-modules which in addition respect the filtrations $M_{\bullet}$ and commute with the isomorphisms $\psi_i $ and $\psi_i '$. Let $\mathrm{M}_{\fil,\phi}(\Q_p ,V,\Q_p (1))$ denote the set of isomorphism classes of objects. 
\end{Definition}

\par The structure of a mixed extension on $M_{\dR} = \D_{\cris}(M_{\et})$ naturally allows us to define extensions $E_1 (M)$ and $E_2 (M)$ by
\begin{equation}\label{E1E2}
  E_1 (M) := M_{\dR} /\Q_p (1) , \qquad E_2 (M) :=\Ker (M_{\dR} \to \Q_p ),
\end{equation}
compare with the definition of $\pi_1$ and $\pi_2$ above. For simplicity we will sometimes write these as $E_1$ and $E_2$. We have a short exact sequence
\begin{equation}\label{makinght}
0 \lra \Q _p (1) \lra E_2 /\Fil^0 \lra V_{\dR} /\Fil^0  \lra 0.
\end{equation}
The image of the extension class $[M] \in \Hf(G_p,E_2) \simeq E_2 /\Fil^0$ in the group $V_{\dR} /\Fil^0 \simeq \Hf(G_p,V_{\dR})$ is exactly the extension class $[E_1]$. We define $\delta $ to be the composite map
\[
\delta \ :\ V_{\dR} /\Fil^0 \stackrel{s}{\lra} V_{\dR} \lra E_2 \lra E_2 /\Fil^0,
\]
where the homomorphism $V_{\dR} \to E_2 $ is the unique Frobenius-equivariant splitting of 
\[
0 \lra \Q_p (1)\lra E_2 \lra V_{\dR} \lra 0,
\]
and the last map is just the quotient. By construction, $[M]$ and $\delta ([E_1])$ have the
same image in $V_{\dR}/\Fil^0$, hence via the exact sequence (\ref{makinght}) their
difference defines an element of $\Q_p (1)$. The filtered $\phi$-module $\Q_p(1)$ is
isomorphic to $\Hf (G_p ,\Q_p (1))$ via \eqref{eqn:exp-map}, so we may think of
$[M]-\delta ([E_1])$ as an element of $\Hf (G_p ,\Q_p (1))$. The local component $\chi_p: \Q_p^{\times} \lra \Q_p$ gives rise to a map
\[ 
\chi_p : \Hf(G_p, \Q_p(1)) \simeq \Z_p^{\times} \widehat{\otimes} \Q_p \lra \Q_p
\]
where the isomorphism follows from Kummer theory. This allows us to define
\begin{equation}\label{loc_nek_def}
h_p (M) := \chi_p \left([M]-\delta ([E_1])\right).
\end{equation}

\par For the practical determination of rational points, it will be necessary to make this more explicit. To do so, it is convenient to introduce some notation for filtered $\phi$-modules $M$ in $\mathrm{M}_{\fil ,\phi}(\Q _p ,V ,\Q_p(1))$. The splitting $s$ of $\Fil^0 V_{\dR}$ defines idempotents $s_1 ,s_2 :V_{\dR} \lra V_{\dR}$ projecting onto the $s(V_{\dR}/\Fil^0)$ and $\Fil^0$ components respectively. Suppose we are given a vector space splitting 
\[
s_0 : \Q_p \oplus V_{\dR} \oplus \Q_p(1) \stackrel{\sim }{\longrightarrow }M.
\]
The split mixed extension $\Q_p \oplus V_{\dR} \oplus \Q_p(1)$ has the structure of a filtered $\phi$-module as a direct sum. Choose two further splittings 
\[
\begin{array}{llll}
s^\phi &: \ \Q_p \oplus V_{\dR} \oplus \Q_p(1) & \stackrel{\sim }{\lra } & M \\
s^{\Fil} &: \ \Q_p \oplus V_{\dR} \oplus \Q_p(1) & \stackrel{\sim }{\lra } & M \\
\end{array}
\]
where $s^{\phi}$ is Frobenius equivariant, and $s^{\Fil}$ respects the filtrations. Note that the choice of $s^{\phi}$ is unique, whereas the choice of $s^{\Fil}$ is not. Suppose we have chosen bases for $\Q_p, V_{\dR}$, and $\Q_p(1)$ such that with respect to these bases, we have
\[
s_0 ^{-1} \circ s^\phi =\left(
\begin{array}{ccc} 
1 & 0 & 0 \\
\boldsymbol{\alpha }_{\phi} & 1 & 0 \\
\gamma_{\phi} & \boldsymbol{\beta }^{\intercal}_{\phi} & 1 \\
\end{array}
\right) \qquad 
s_0 ^{-1}\circ s^{\Fil} = \left( 
\begin{array}{ccc}
1 & 0 & 0 \\
0 & 1 & 0 \\
\gamma_{\Fil} & \boldsymbol{\beta }^{\intercal}_{\Fil}  & 1 \\
\end{array}
\right).
\]
Then, Nekov\'a\v r's local height at $p$ defined by \eqref{loc_nek_def} translates into our notation to the simple expression  
\begin{equation}\label{eqn:height-decomposition}
h _p (M)= \chi_p \left( \gamma_{\phi} - \gamma_{\Fil} -\boldsymbol{\beta}^{\intercal}_{\phi} \cdot s_1 (\boldsymbol{\alpha}_{\phi})-\boldsymbol{\beta }^{\intercal}_{\Fil} \cdot s_2 (\boldsymbol{\alpha}_{\phi}) \right).
\end{equation}

% -------------------------------------------------------------
\subsection{Twisting and $p$-adic heights. }\label{sec:twist_ht}
We now explain how to use Nekov\'a\v r's theory of $p$-adic heights for the desired Diophantine application. The method is a slight variation of the approach outlined in \S\ref{sec:sel_rat} via diagram \eqref{eqn:diagram-nonab}. See~\cite[Section~5]{BD16} for further details on the twisting construction. 

\par First let us briefly clarify the strategy for constructing a quadratic Chabauty pair using $p$-adic heights. Suppose that $X$ satisfies the conditions of~\S\ref{sec:qc_pair}. In particular, we
have $r=g$ and $\Hf(G_T,V) \simeq \HH^0(X_{\Q_p}, \Omega^1)^*$, so we can view the global height $h$ as a bilinear pairing
 \[
 h : \HH^0(X_{\Q_p}, \Omega^1)^* \times \HH^0(X_{\Q_p}, \Omega^1)^* \lra \Q_p.
 \]
\begin{lemma}\label{lemma:stating_obvious}
Suppose that, for all $v\in T_0 $, we have functions 
\[
\rho _v :X(\Q _v ) \lra \mathrm{M} (G_v ;\Q _p ,V,\Q _p (1)),
\] 
as well as a function
\[
\rho _p :X(\Q _p ) \lra \Mf (G_p ;\Q _p ,V,\Q _p (1)),
\]
with the following properties:
\begin{enumerate}
\item[(i)] For all $v\in T_0 $, $\rho _v $ has finite image.
\item[(ii)]   There exist $E \in \End \HH^0 (X_{\Q _p },\Omega^1 )^*$ and $c\in \HH^0 (X_{\Q _p },\Omega ^1 )^*$ such that 
\[
(\pi _1 ,\pi _2 )\circ \rho _p =(\AJb ,E (\AJb) +c).
\]
 \item[(iii)]  For all $z\in X(\F _p)$, the map $\rho _p |_{]z[}$ is given by a power series, and the map 
\[
(\AJb ,h_p \circ \rho _p ):]z[ \ \lra \ \HH^0 (X_{\Q _p },\Omega ^1 )^* \times \Q _p
\]
has Zariski dense image.
\item[(iv)]    For all $x\in X(\Q )$, there is a mixed extension $\rho (x) \in \Mf (G_T ;\Q _p ,V,\Q _p (1))$ with the property that $\loc _v (\rho (x))=\rho _v (x)$ for all $v\in T$.
\end{enumerate}
Then $(h_p \circ \rho _p ,\Upsilon )$ is a quadratic Chabauty pair, where 
\[
\Upsilon :=\left\{ \sum _{v\in T_0} h_v (\rho _v (x_v )):(x_v )\in \prod _{v\in T_0 }X(\Q _v ) \right\},
\]
with endomorphism $E$ and constant $c$.
\begin{proof}
By the first property, $\Upsilon $ is finite. 
The analyticity properties hold by assumption. The last property implies that for all $x\in X(\Q )$, 
\[
h_p \circ \rho _p (x) -B(\AJb (x),E \AJb (x)+c) \in \Upsilon
\]
where $B=h$ is the global bilinear height pairing.
\end{proof}
\end{lemma}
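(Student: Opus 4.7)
The plan is to verify the two defining conditions (a) and (b) of a quadratic Chabauty pair from \S\ref{sec:qc_pair} for the pair $(h_p \circ \rho_p, \Upsilon)$, with pairing $B := h$ (viewed as a bilinear form on $\HH^0(X_{\Q_p},\Omega^1)^*$ via the identifications of \S\ref{sec:qc_pair} under the assumption $r=g$ together with the finite-index hypothesis on $\overline{\JacX(\Q)}$), endomorphism $E$ and constant $c$. I would first dispatch property~(a): the finiteness of $\Upsilon$ is immediate from (i), since each $\rho_v$ ($v \in T_0$) takes only finitely many values in $\mathrm{M}(G_v;\Q_p,V,\Q_p(1))$ and hence $h_v\circ\rho_v$ does too, while (iii) provides the Zariski density and convergent power series expansion of $(\AJb, h_p\circ\rho_p)$ on every residue disk directly.

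The substantive step is to establish the bilinear relation \eqref{quadratic_chab} in property~(b), which I plan to extract by combining the global-to-local decomposition of Nekov\'a\v r's pairing with hypothesis~(iv). Given $x \in X(\Q)$, use (iv) to produce a global mixed extension $\rho(x) \in \Mf(G_T;\Q_p,V,\Q_p(1))$ whose localization at each $v \in T$ recovers $\rho_v(x)$. Since $\rho(x)$ factors through $G_T$, its restriction to $G_v$ is unramified at every $v \notin T$ and hence has vanishing local height by the remark preceding Lemma~\ref{lemma:height-away-p}. The identity $h = \sum_v h_v$ therefore collapses to
\[
h(\rho(x)) \;=\; h_p(\rho_p(x)) \;+\; \sum_{v\in T_0} h_v(\rho_v(x)).
\]
On the other hand, hypothesis~(ii) identifies $(\pi_1(\rho(x)),\pi_2(\rho(x)))$ with $(\AJb(x), E(\AJb(x))+c)$, so bilinearity of $h$ gives $h(\rho(x)) = B(\AJb(x), E(\AJb(x))+c)$. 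Subtracting the two expressions yields
\[
h_p(\rho_p(x)) - B(\AJb(x), E(\AJb(x))+c) \;=\; -\sum_{v\in T_0} h_v(\rho_v(x)),
\]
which lies in $-\Upsilon$, and hence (after replacing $\Upsilon$ by its negation, still a finite set of the same form) in $\Upsilon$ as required by \eqref{quadratic_chab}. Taking $(x_v)$ to be the constant tuple $x$ in the definition of $\Upsilon$ shows that this element really is of the prescribed form.

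None of the individual bookkeeping steps is difficult, so the hard part does not lie in the verification itself. What I expect to be the main obstacle — and indeed is the content of most of the later sections — is the actual \emph{construction} of data $(\rho_v, \rho_p, \rho)$ satisfying (i)--(iv) for a concrete curve $X$, in particular producing a global mixed extension whose image under $(\pi_1,\pi_2)$ has the specific bilinear form $(\AJb, E\AJb + c)$ tied to a chosen correspondence, and arranging that its localization at $p$ be compatible with the explicit local height formula~\eqref{eqn:height-decomposition}. Supplying such data is exactly the role of the correspondence-based construction of $A_Z(b,x)$ recalled in \S\ref{sec:quotients} and made computable in the subsequent sections.
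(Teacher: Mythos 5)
Your proof is correct and takes essentially the same route as the paper's (much terser) argument: finiteness of $\Upsilon$ from (i), analyticity from (iii), and the bilinear relation \eqref{quadratic_chab} by combining (ii) and (iv) with the decomposition $h=\sum_v h_v$ and the vanishing of local heights at places outside $T$. The sign point you flag (the difference lands in $-\Upsilon$ rather than $\Upsilon$) is glossed over in the paper and is harmless, exactly as you observe, since one may replace $\Upsilon$ by its negation and in the intended application $\Upsilon=\{0\}$.
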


\par Let $I$ denote the augmentation ideal of $\Q _p [\pi _1 ^{\et}(\overline{X},b)]$, and
define $A_n^{\et}(b):= \Q _p [\pi _1 ^{\et}(\overline{X},b)]/I^{n+1} $. Then $A_n
^{\et}(b)$ is a nilpotent algebra, and by the theory of Malcev completion \cite[Appendix A.3]{Qui69} the limit of the $A_n$ is isomorphic to the pro-universal enveloping algebra of the $\Q_p$-unipotent \'etale fundamental group of $\overline{X}$ at $b$. There is an isomorphism 
\[
I^2 /I^3 \simeq \Coker \left(\Q _p (1) \stackrel{\cup ^* }{\lra } V^{\otimes 2}\right).
\]

\par Fix a \nice correspondence $Z \in \Pic(X \times X)$ (or, more generally, in $\Pic(X \times X)\otimes \Q_p$, see Remark~\ref{remark:general_tate_class}), and let $\U_Z$ denote the corresponding quotient of $\U_2^{\et}$ as defined in \S\ref{sec:quotients}. We define the mixed extension $A_Z (b)$ to be the pushout of $A_2 (b)$ by 
\[
\cl _Z ^* : \Coker \left(\Q _p (1) \stackrel{\cup ^* }{\lra } V^{\otimes 2} \right) \ \lra \ \Q _p (1),
\]
see also \cite[\S 5]{BD16}. Then $A_Z(b)$ defines an element in $\cMf(G_T; \Q_p, V, \Q_p(1))$, with respect to the $I$-adic filtration. The mixed extension $A_Z (b)$ is naturally equipped with a faithful Galois-equivariant action of $\U_Z$ which acts unipotently with respect to the filtration. 

\par By twisting we obtain the maps
\[
\begin{array}{llcclll}
\tau & : \ \Hf (G_T ,\U_Z) & \lra & \Mf (G_{T}; \Q_p,V,\Q_p(1)), & P & \longmapsto &
 P\times_{\U_Z} A_Z(b), \\
  \tau_p & : \ \Hf (G_p ,\U_Z) & \lra & \Mf (G_p; \Q_p,V,\Q_p(1)), & P & \longmapsto &
 P\times_{\U_Z} A_Z(b).  \\
\end{array}
\]
As explained in \cite[\S5.1]{BD16}, $\tau$ is injective. When $P$ is a torsor for a group $G$, we will denote $M^{(P)}$ for the twist $P \times_G M$ of a $G$-module $M$. When $P=\pi _1 ^{\et}(\overline{X};b,x)$, we denote 
\[
A_Z (b,x) := \tau([P]) =  A_Z (b)^{(P)} 
\]
and
$A_1 (b,x) := A_1 (b)^{(P)}$. We similarly define $IA_Z (b,z):=IA_Z (b)^{(P)}$. Composing with the unipotent Kummer map, we obtain a function
\[ 
\theta \ :\ X(\Q _p ) \ \lra \ \Q _p \ ; \ x \ \longmapsto \ h_p \left( A_Z (b,x)\right).
\]

\par First, let us discuss the local heights $h_v$, when $v \in T_0$. It was shown in Kim--Tamagawa \cite[Corollary 0.2]{KT08} that the $v$-component of the unipotent Albanese map has finite image.
\begin{theorem}[Kim--Tamagawa] \label{thm:KT}
  The map $j^{\et}_{2,v}: X(\Q_v) \lra \HH^1(G_v, \U_2)$ has finite image. 
\end{theorem}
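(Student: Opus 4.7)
The plan is to reduce to the case of semistable reduction via a finite base extension, and then exploit a specialization argument to the special fiber. By the semistable reduction theorem, there exists a finite Galois extension $K_v/\Q_v$ such that $X_{K_v}$ admits a proper regular model $\mathfrak{X}/\mathcal{O}_{K_v}$ with semistable reduction. The inflation-restriction sequence in nonabelian cohomology applied to $G_v \supset G_{K_v}$ shows that the kernel of the restriction map $\HH^1(G_v, \U_2^{\et}) \to \HH^1(G_{K_v}, \U_2^{\et})$ is controlled by cohomology of the finite group $\Gal(K_v/\Q_v)$ with values in $(\U_2^{\et})^{G_{K_v}}$, and is in particular a finite set. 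So it suffices to bound the image of the composition
\[
X(\Q_v) \ \lra \ X(K_v) \ \lra \ \HH^1(G_{K_v}, \U_2^{\et}).
\]

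The core of the argument is a specialization statement: if $x, y \in X(K_v) = \mathfrak{X}(\mathcal{O}_{K_v})$ have the same reduction to the special fiber $\mathfrak{X}_{k_v}$, then the path torsors $\pi_1^{\et}(\overline{X}; b, x)$ and $\pi_1^{\et}(\overline{X}; b, y)$ are isomorphic as $\U_2^{\et}$-torsors under $G_{K_v}$. The intuition is that within a single residue disk, continuous variation of a path torsor whose inertia action is tame is forced to be trivial, since the pro-unipotent coefficients are $p$-adic while wild inertia at $v \neq p$ is of pro-prime-to-$p$ order. Granting this, the finiteness of the image follows because $\mathfrak{X}_{k_v}$ has only finitely many irreducible components and nodes, so there are only finitely many possible reductions in $\mathfrak{X}(\mathcal{O}_{K_v})$; combined with the specialization statement this yields finiteness of the image in $\HH^1(G_{K_v}, \U_2^{\et})$.

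The main technical obstacle is the rigorous verification of the specialization statement. This requires a careful analysis of nearby cycles for $\mathfrak{X}$ and a comparison with the logarithmic \'etale fundamental group of the special fiber, together with control on the inertia action on $\U_2^{\et}$; the latter comes down to the weight-monodromy filtration for curves, which is available thanks to Raynaud's theorem already invoked elsewhere in the paper (for the vanishing of $\HH^1(G_v, V)$). A variant of this argument, working directly with the $\Q_p$-pro-unipotent fundamental group of a strictly semistable curve and its Hodge-theoretic avatars, is what Kim--Tamagawa carry out; their result actually gives a more refined combinatorial description of the image in $\HH^1(G_v, \U_2^{\et})$, but for our purposes the qualitative finiteness statement above is all we need.
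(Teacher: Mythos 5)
The paper offers no proof of this statement to compare yours against: it is quoted directly from Kim--Tamagawa \cite[Corollary~0.2]{KT08} and used as a black box. Judged as a proof, your proposal outlines a strategy of the right general shape (pass to a finite Galois extension $K_v/\Q_v$ over which $X$ has semistable reduction; note that the restriction map $\HH^1(G_v,\U_2^{\et})\to\HH^1(G_{K_v},\U_2^{\et})$ has finite --- in fact trivial --- fibres, since its fibres are governed by cohomology of the finite group $\Gal(K_v/\Q_v)$ with values in (twists of) a unipotent group over a characteristic-zero field, and such cohomology vanishes by d\'evissage along the central series; then bound the image over $K_v$ by properness and finiteness of the residue field), but it is not a proof. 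The entire content of the theorem is concentrated in your ``specialization statement'' --- that two points of $\mathfrak{X}(\mathcal{O}_{K_v})$ with the same reduction give isomorphic $G_{K_v}$-equivariant $\U_2$-torsors --- and you do not establish it. The heuristic you give (tameness, plus the fact that wild inertia at $v\neq p$ is pro-prime-to-$p$) does not by itself imply it; making it rigorous requires precisely the analysis of the inertia and Frobenius action on the path torsors in the degenerate situation (weight--monodromy, comparison with the special fibre) that constitutes Kim--Tamagawa's paper, and you explicitly defer that step to them. So, read as a self-contained argument, the proposal has a genuine gap at its central step; read as an account of why the cited result holds, it is a reasonable but incomplete sketch.

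Two smaller points. First, the finiteness of the set of possible reductions has nothing to do with counting irreducible components and nodes of the special fibre: it follows simply because the residue field $k_v$ is finite, so $\mathfrak{X}(k_v)$ is a finite set; the count of components would only enter in the finer (and true) statement that the torsor class depends only on the component, or dual-graph datum, to which the point reduces. Second, be explicit that everything here concerns $v\neq p$: for $v=p$ the statement is false (the image of $j^{\et}_{2,p}$ is Zariski dense in $\Hf(G_p,\U_2^{\et})$, which is what the method exploits), and your argument tacitly uses the residue characteristic being different from $p$ both in the inertia heuristic and in the vanishing of the relevant finite-group cohomology.
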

This theorem implies that the following set is finite:
\[ 
\Upsilon := \left\{ \sum _{v \in T_0} h_v (A_Z (b,x_v )):(x_v )\in \prod_{v \in T_0} X(\Q
_v ) \right\} \subset \Q _p.
\]

\begin{lemma}\label{lemma:ht_qcpair}
With these definitions, $\left(\theta, \Upsilon \right)$ is a quadratic Chabauty pair. The
endomorphism of the quadratic Chabauty pair is that induced by $Z$, and the constant is $[IA_Z (b)]$.
\begin{proof}
It is enough to show that the maps $\rho _v =[A_Z (b,.)]$ satisfy the hypotheses of Lemma
\ref{lemma:stating_obvious}. Finiteness of $\rho _v $ when $v\neq p$ follows from the
Theorem of Kim and Tamagawa, proving hypothesis (i).
As explained in \cite[\S 5.2]{BD16}, $A_Z (b,x)$ is a mixed extension of
$\kappa (x-b)$ and $E_Z (\kappa (x-b)) + [IA_Z (b)]$, where $E_Z$ is the endomorphism
induced by $Z$, which proves (ii). 
Recall that the map $j_{\U_Z,p}^{\et}$ has Zariski dense image. By \cite[Lemma 10]{BD17}, the map 
\[
\left( \pi_*,\ h_p \circ \tau_p \right) \ : \  \Hf (G_p ,\U_Z) \ \lra \ \Hf (G_p ,V) \times \Q _p 
\]
is an isomorphism of schemes. The third property holds, since 
\[
(\AJb ,h_p (A_Z(b,.))) = (\pi _* ,h_p \circ \tau_p) \circ j ^{\et} _{\U_Z,p}.
\]
To prove (iv), we take $\rho(x) = [A_Z(b,x)]$.
\end{proof}
\end{lemma}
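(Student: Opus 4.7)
The plan is to verify that the functions $\rho_v(x) := [A_Z(b,x)] \in \mathrm{M}(G_v; \Q_p, V, \Q_p(1))$ for $v \in T_0$, and $\rho_p(x) := [A_Z(b,x)] \in \Mf(G_p; \Q_p, V, \Q_p(1))$, together with the global mixed extension $\rho(x) := [A_Z(b,x)] \in \Mf(G_T; \Q_p, V, \Q_p(1))$, satisfy the four hypotheses of Lemma~\ref{lemma:stating_obvious}. Hypothesis (iv) is then immediate from the definition: the global mixed extension $A_Z(b,x)$ restricts to $A_Z(b,x)$ at each place $v \in T$ since its construction via pushout of $A_2(b)^{(P)}$ (where $P = \pi_1^{\et}(\overline{X};b,x)$) commutes with restriction of Galois groups.

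For hypothesis (i), I would invoke the Kim--Tamagawa theorem (Theorem~\ref{thm:KT}), which gives finiteness of the image of $j^{\et}_{2,v}$ for $v \in T_0$. Since $A_Z(b,x)$ is the pushout of $A_2(b,x)$ along the Tate class of $Z$, which is a natural construction, factoring through $j^{\et}_{\U_Z, v}$, finiteness of the image of $j^{\et}_{2,v}$ propagates to finiteness of the image of $\rho_v$.

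For hypothesis (ii), the key input is the explicit description recalled from~\cite[\S5.2]{BD16}: the mixed extension $A_Z(b,x)$ has $\pi_1([A_Z(b,x)]) = \kappa(x-b)$ and $\pi_2([A_Z(b,x)]) = E_Z(\kappa(x-b)) + [IA_Z(b)]$, where $E_Z \in \End(V)$ is the endomorphism induced by $Z$. Composing with the canonical identification $\HH^1_f(G_T, V) \simeq \HH^0(X_{\Q_p}, \Omega^1)^*$ and the fact that $\kappa = \AJb$ on $X(\Q)$ identifies the endomorphism as $E_Z$ and the constant as $[IA_Z(b)]$.

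For hypothesis (iii), which I expect to be the main obstacle, the strategy is to combine two inputs. First, by Kim~\cite{Kim09}, the de Rham unipotent Albanese map $j^{\dR}_{\U_Z}$, and consequently $j^{\et}_{\U_Z,p}$ under the $\D_{\cris}$-comparison, has Zariski dense image and is given by a convergent power series on each residue disk. Second, one applies~\cite[Lemma~10]{BD17}, asserting that
\[
(\pi_*,\ h_p \circ \tau_p) : \Hf(G_p, \U_Z) \lra \Hf(G_p, V) \times \Q_p
\]
is an isomorphism of schemes. The density and power series property for $(\AJb, h_p \circ \rho_p)$ on each residue disk then follow from the factorization $(\AJb, h_p \circ \rho_p) = (\pi_*, h_p \circ \tau_p) \circ j^{\et}_{\U_Z, p}$, which requires confirming that the twisting construction intertwines $j^{\et}_{\U_Z,p}$ with $\tau_p$, an identification that is essentially tautological from the definition of twisting but which needs the Tannakian bookkeeping to be set up correctly.
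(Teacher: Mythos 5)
Your proposal is correct and follows essentially the same route as the paper: verifying the hypotheses of Lemma~\ref{lemma:stating_obvious} by using Kim--Tamagawa for (i), the description of $\pi_1, \pi_2$ of $A_Z(b,x)$ from \cite[\S 5.2]{BD16} for (ii), Zariski density of $j^{\et}_{\U_Z,p}$ together with \cite[Lemma 10]{BD17} and the factorization through $(\pi_*, h_p\circ\tau_p)$ for (iii), and taking $\rho(x)=[A_Z(b,x)]$ for (iv).
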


\par By Lemma~\ref{lemma:height-away-p}, we have that the local heights away from $p$ are all trivial if $X$ has potentially good reduction everywhere, so that $\Upsilon=\{0\}$. We obtain the following corollary:
\begin{corollary}\label{cor:qc_pair_pg}
If $X$ has potential good reduction everywhere, then $\left(\theta, \{0\} \right)$ is a
quadratic Chabauty pair, where $\theta = h_p (A_Z (b,.))$. The endomorphism of the quadratic Chabauty pair is that induced by $Z$, and the constant is $[IA_Z (b)]$.
\end{corollary}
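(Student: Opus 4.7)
The plan is to deduce the corollary directly from Lemma~\ref{lemma:ht_qcpair}, which already establishes that $(\theta,\Upsilon)$ is a quadratic Chabauty pair whose endomorphism is the one induced by $Z$ and whose constant is $[IA_Z(b)]$. Since these data do not depend on $\Upsilon$, the only thing left to verify is that, under the additional hypothesis of potential good reduction everywhere, the set
\[
\Upsilon = \left\{\sum_{v\in T_0} h_v(A_Z(b,x_v)) : (x_v)\in\prod_{v\in T_0} X(\Q_v)\right\}
\]
collapses to $\{0\}$.

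For this, it suffices to show that $h_v(A_Z(b,x_v))=0$ for every $v\in T_0$ and every $x_v\in X(\Q_v)$. By Lemma~\ref{lemma:height-away-p}, this reduces further to showing that the mixed extension $A_Z(b,x_v)\in \mathrm{M}(G_v;\Q_p,V,\Q_p(1))$ is potentially unramified. The strategy would be to exploit the functoriality of the construction $A_Z(b,x) = \pi_1^{\et}(\overline{X};b,x)\times_{\U_Z} A_Z(b)$: if $K_v/\Q_v$ is a finite extension over which $X$ has good reduction (which exists by hypothesis), then since $v\neq p$, both the $\Q_p$-pro-unipotent \'etale fundamental group $\pi_1^{\et}(\overline{X},b)$ and the path torsors $\pi_1^{\et}(\overline{X};b,x_v)$ are unramified as $G_{K_v}$-representations, by the usual smooth and proper base change comparison for a good model over $\mathcal{O}_{K_v}$. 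Pushing forward the universal mixed extension $A_Z(b)$ along this quotient and twisting by the path torsor preserves the property of being unramified over $G_{K_v}$, so $A_Z(b,x_v)$ is potentially unramified at $v$.

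The only subtle step is the claim that, after passing to an extension where $X$ has good reduction, the path torsor $\pi_1^{\et}(\overline{X};b,x_v)$ becomes unramified. This is the essential content behind the Kim--Tamagawa finiteness theorem (Theorem~\ref{thm:KT}) at primes of potential good reduction; geometrically, once $X$ extends to a smooth proper scheme $\mathcal{X}/\mathcal{O}_{K_v}$ with $b,x_v$ extending to integral sections, the path torsor is the $\Q_p$-linearisation of an \'etale path torsor on the good reduction fibre, which is a $G_{K_v}$-equivariant object pulled back from $\Spec(\mathcal{O}_{K_v})$ and hence unramified. I expect this is the main technical hurdle; everything else is a formal consequence of the lemmas already established. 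Combining these ingredients, $\Upsilon=\{0\}$ and Lemma~\ref{lemma:ht_qcpair} then yields the corollary with its claimed endomorphism and constant.
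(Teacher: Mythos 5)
Your proposal is correct and follows essentially the same route as the paper, which deduces the corollary by combining Lemma~\ref{lemma:ht_qcpair} with Lemma~\ref{lemma:height-away-p}, noting that potential good reduction makes each $A_Z(b,x_v)$ potentially unramified so that all local heights away from $p$ vanish and $\Upsilon=\{0\}$. The paper leaves the unramifiedness of the path torsors over an extension of good reduction implicit, whereas you spell it out, but this is a fleshing-out of the same argument rather than a different one.
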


\begin{Remark}\label{rk:ht_equiv}
We say the splitting  $s$ of the Hodge filtration is \textit{$K$-equivariant} if it commutes with the action of $K$. If $s$ is a $K$-equivariant splitting, then by \cite[\S4.1]{BD17} the associated height pairing on any two extensions $E_1, E_2$ is $K$-equivariant, in the sense that
\[
h(\alpha E_1, E_2) = h(E_1, \alpha E_2)
\]
for all $\alpha \in K$. We can use this to decrease the number of rational points required
to determine $X(\Q)$ using quadratic Chabauty.
\end{Remark}

\begin{Remark} In the general theory of $p$-adic heights, the character $\chi$ is
  essential for giving the right way to combine the various local classes in $\HH^1 (G_v
  ,\Q_p (1))$ which the theory constructs. However, for the application to $X_{\mathrm{s}}(13)$,
  or any curve with potentially good reduction everywhere, the local heights away from $p$
  are all trivial by Lemma \ref{lemma:height-away-p}.
  This means that the only purpose of $\chi$ is to define an isomorphism 
\[
\Hf (G_p ,\Q_p (1)) \simeq \Q_p .
\]
For this reason we henceforth 
drop $\chi$ from our notation, and focus in the next section on describing the class
  $[M]-\delta ([E_1(M)])$ for the mixed extension $M = A_Z(b,x)$, where $x \in X(\Q)$. 
\end{Remark}

% -------------------------------------------------------------
\section{Explicit computation of the $p$-adic height I: Hodge filtration}
\label{sec:comphodge}

To complete the recipe for finding explicit finite sets containing $X(\Q)$, it remains to choose a \nice class $Z \in \Pic(X \times X)\otimes \Q_p$, and write the resulting locally analytic function 
\[
\theta \ :\ X(\Q _p ) \ \lra \ \Q _p \ ; \ x \ \longmapsto \ h_p \left( A_Z (b,x)\right)
 \] 
as a power series on every residue disk of $X(\Q_p)$. By equation \eqref{eqn:height-decomposition}, all that is needed is a sufficiently explicit description of the filtered $\phi$-module 
\[
\D_{\cris}(A_Z (b,x)).
\]
We compute the filtration and the Frobenius separately, as pull-backs of certain universal objects $\cA _Z ^{\dR}$ and $\cA _Z ^{\rig }$ respectively. The filtered structure of $\cA_Z ^{\dR}$ is made explicit in this section following \cite[\S 6]{BD17}, and the Frobenius structure is determined in \S~\ref{sec:compfrob}.
  
\subsection{Notation. }\label{subsec:notation}
Henceforth, $X$ is a smooth projective curve of genus $g>1$ over $\Q$, and $Y \subset X$ is an affine open subset. Let $b$ be a rational point of $Y$ which is integral at $p$. Suppose 
\[ 
  \# (X\backslash Y)(\overline{\Q})= d,
\] 
and let $K/\Q$ be a finite extension over which all the points of $D = X \backslash Y$ are defined. Choose a set $\omega_{0}, \ldots, \omega_{2g+d-2} \ \in \ \HH^0(Y,\Omega^1)$ of differentials on $Y$, satisfying the following properties:
\begin{itemize}
\item The differentials $\omega_0, \ldots, \omega_{2g-1}$ are of the second kind on $X$, and form a \textit{symplectic basis} of $\HH^1_{\dR}(X)$, in the sense that the cup product is the standard symplectic form with respect to this basis. 
\item The differentials $\omega_{2g}, \ldots, \omega_{2g+d-2}$ are of the third kind on $X$.
\end{itemize} 
We set $V_{\dR}(Y):= \HH^1 _{\dR}(Y)^*$, and let $T_0 ,\ldots , T_{2g+d-2}$ be the dual basis.

% --------------------------------------------------
\subsection{The universal filtered connection $\cA_n^{\dR}$. }
\label{subsec:A2-filtration}
Let $K$ be a field of characteristic zero, and $\cC^{\dR}(X_{K})$ the category of unipotent vector bundles with connection on $X_{K}$. Our choice of base point $b\in X(K)$ makes $\cC ^{\dR}(X_K )$ into a unipotent Tannakian category, which has a fundamental group we denote by $\pi_1^{\dR}(X,b)$, see \S\ref{subsec:pathz}. We use the notation from the appendix and set 
\[
A_n^{\dR}(b) = A_n (\mathcal{C}^{\dR}(X_{\Q _p },b^* )),
\]
with associated path torsors $A_n^{\dR}(b,x)$. Let $\cA_n^{\dR}(b)$, or simply $\cA_n^{\dR}$, be the universal $n$-unipotent object, associated to the $\pi_1^{\dR}(X,b)$-representation $A_n^{\dR}(b)$ via the Tannaka equivalence, see \S~\ref{subsec:universal-objects}. This vector bundle carries a Hodge filtration, with the property that the $K$-vector space isomorphism
\[
x^* \cA _n ^{\dR}(b) \simeq A_n ^{\dR} (b,x), \qquad \forall x : \mathrm{Spec}(K) \lra X_{K}
\]
is an isomorphism of filtered vector spaces. For more details, see also \cite[pp. 98--100]{Kim09}.

\par To make it more explicit, we now describe a closely related bundle $\cA_n^{\dR}(Y)$ on an affine open $Y$, using the notation introduced in \S\ref{subsec:notation}. This bundle is of larger rank, but has the advantage of admitting a very simple description. Its relation with $\cA_n^{\dR}$ is given in Corollary \ref{cor:AX-and-AY}. To distinguish it more clearly from $\cA_n^{\dR}(Y)$, we will denote $\cA_n^{\dR}$ by $\cA_n^{\dR}(X)$ in this paragraph.

\par Set $A_n ^{\dR}(Y) :=\oplus _{i=0}^n V_{\dR} (Y)^{\otimes i}$ and define the connection
\begin{equation}\label{eqn:connection-AY}
\nabla _n \ :\ \cA_n ^{\dR}(Y) \ \lra \ \cA_n ^{\dR}(Y) \otimes \Omega ^1 _Y\ ,  \qquad \nabla_n (v \otimes 1) \ = \ \sum _{i=0}^{2g+d-2}-(T_i \otimes v)\otimes \omega _i,
\end{equation}
on $\cA_n ^{\dR}(Y):=A_n^{\dR}(Y) \otimes \mathcal{O}_Y$. Then $\cA_n^{\dR}(Y)$ is $n$-step unipotent, in the sense that it has a filtration
\[
 \bigoplus _{i=j}^n V_{\dR} (Y)^{\otimes i} \otimes \mathcal{O}_Y, \qquad \mbox{for} \ j = 0,1,\ldots, n
\]
by subbundles preserved by $\nabla$, where the graded pieces inherit the trivial connection. The following theorem, proved by Kim \cite{Kim09}, provides a universal property for the bundle $\cA_n^{\dR}(Y)$.

\begin{theorem}[Kim]\label{thm:kim_connection}
Let $\mathbf{1} = 1\otimes 1$ be the identity section of $\cA_n^{\dR}(Y)$. Then $(\mathcal{A}_n  ^{\dR} (Y),\mathbf{1})$ is a universal $n$-pointed object, in the sense of \S \ref{subsec:Tannakian}. That is, for any $n$-step unipotent vector bundle $\mathcal{V}$ with connection on $Y$, and any section $v$ of $\mathcal{V}$, there exists a unique map
\[ f : \cA_n^{\dR}(Y) \lra \mathcal{V} \qquad \mbox{such that} \ \ f(\mathbf{1}) = v.\] 
\end{theorem}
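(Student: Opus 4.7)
The plan is to verify the universal property via two linked steps: uniqueness by showing that $\mathbf{1}$ generates $\cA_n^{\dR}(Y)$ as a bundle with connection, and existence by induction on the unipotence degree $n$.

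For uniqueness, I would argue that the formula $\nabla_n \mathbf{1} = -\sum_i (T_i \otimes 1) \otimes \omega_i$, together with the linear independence of $\{\omega_i\}$ in $\HH^1_{\dR}(Y)$, allows one to recover each constant section $T_i \otimes 1$ from iterated covariant differentiation of $\mathbf{1}$ (followed by contraction against a basis dual to the $\omega_i$). Iterating the connection formula on higher tensors, and noting that $\nabla_n$ vanishes on top-degree tensors (their image in $V_{\dR}(Y)^{\otimes(n+1)}$ being projected to zero), one recovers every basis element $T_{i_1} \otimes \cdots \otimes T_{i_k} \otimes 1$ of $A_n^{\dR}(Y)$. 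Since any morphism of connection bundles is $\mathcal{O}_Y$-linear and commutes with $\nabla$, it is thus uniquely determined by $f(\mathbf{1}) = v$.

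For existence, I would induct on $n$, exploiting the short exact sequence of connections
\[ 0 \to V_{\dR}(Y)^{\otimes n} \otimes \mathcal{O}_Y \to \cA_n^{\dR}(Y) \to \cA_{n-1}^{\dR}(Y) \to 0 \]
whose leftmost term has trivial connection. Given $(\mathcal{V}, v)$ with unipotent filtration $\mathcal{V} = \mathcal{V}^0 \supset \cdots \supset \mathcal{V}^{n+1} = 0$, the inductive hypothesis applied to $(\mathcal{V}/\mathcal{V}^n, \bar v)$ yields $\bar f: \cA_{n-1}^{\dR}(Y) \to \mathcal{V}/\mathcal{V}^n$. Lifting to an $\mathcal{O}_Y$-linear map $f: \cA_n^{\dR}(Y) \to \mathcal{V}$ sending $\mathbf{1} \mapsto v$, I would then correct the lift on the top piece $V_{\dR}(Y)^{\otimes n} \otimes \mathcal{O}_Y$ so as to annihilate the defect $\Delta = \nabla f - f \nabla_n$, which takes values in $\mathcal{V}^n \otimes \Omega^1_Y$ by construction.

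The main obstacle is showing that $\Delta$ can indeed be annihilated by such an adjustment. This amounts to surjectivity of the ``variation map'' sending modifications of $f$ on the top piece to the resulting modifications of $\Delta$. The calibration of the connection formula---with $\{T_i\}$ dual to the basis $\{\omega_i\}$ of $\HH^1_{\dR}(Y)$---ensures precisely this surjectivity: any $\mathcal{V}^n \otimes \Omega^1_Y$-valued obstruction can be written in the basis $\omega_i$ modulo exact forms (which are absorbed into the ambiguity of the lift), and the $T_i$ in $V_{\dR}(Y)$ provide exactly the corresponding degrees of freedom on the top piece. The inclusion of differentials of the third kind is essential here, as it guarantees that $\{\omega_i\}$ is a full basis of $\HH^1_{\dR}(Y)$ for the affine opening $Y \subsetneq X$.
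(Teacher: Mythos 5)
The paper does not supply a proof of this theorem: it is attributed to Kim, with a citation to~\cite{Kim09} (``The following theorem, proved by Kim\ldots''), so there is no in-paper argument against which to measure your proposal. Taken on its own terms, your overall shape --- existence by induction on the unipotence degree, uniqueness by showing $\mathbf{1}$ generates $\cA_n^{\dR}(Y)$ under $\nabla$ --- is a reasonable way to verify the universal property directly, and you have correctly identified the essential input, namely that the $\omega_i$ form a basis of $\HH^1_{\dR}(Y)$.

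However, both halves of your argument have genuine gaps. For uniqueness, ``contraction against a basis dual to the $\omega_i$'' is not an available operation: $\Omega^1_Y$ is a line bundle, so the $\omega_i$ are $\mathcal{O}_Y$-linearly dependent and admit no $\mathcal{O}_Y$-linear dual frame. The only fact you can use is the $\Q_p$-linear independence of the $\omega_i$ as global sections (equivalently, of the functions $h_i$ in a trivialisation $\omega_i = h_i\,dt$), and upgrading that to ``iterated covariant derivatives of $\mathbf{1}$ generically span $\cA_n^{\dR}(Y)$'' requires a Wronskian-type argument that you have not made. A cleaner route avoids this entirely: by Tannaka duality, a morphism of connections $\cA_n^{\dR}(Y)\to\mathcal{V}$ corresponds to a $\pi_1^{\dR}(Y,b)$-equivariant map $A_n^{\dR}(Y)\to b^*\mathcal{V}$, and $A_n^{\dR}(Y)=A/I^{n+1}$ is generated as a module by $1$, so the map is determined by the image of~$\mathbf{1}$. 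For existence, a correction $g$ supported only on the top graded piece $V_{\dR}(Y)^{\otimes n}\otimes\mathcal{O}_Y$ cannot kill the defect: with a lift of the form $\iota\circ\bar f\circ(\text{quotient})$ for an $\mathcal{O}_Y$-linear splitting $\iota$ of $\mathcal{V}\to\mathcal{V}/\mathcal{V}^n$, the defect on a word $s$ of length $<n-1$ equals the second fundamental form of $\iota$ applied to $\bar f(s)$, which is generically nonzero, while a top-supported $g$ only changes $\Delta$ on words of length $\ge n-1$. The correction must instead be assembled degree by degree from the top down; at each degree $k$ the residual obstruction lies in $\HH^1_{\dR}(Y)\otimes\mathcal{V}^n$ and is absorbed by adjusting by constants the degree-$(k+1)$ part of $g$, and it is precisely here that the hypothesis that the $\omega_i$ span $\HH^1_{\dR}(Y)$ enters. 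Your sketch gestures at this ``matching of degrees of freedom'' but does not actually carry it out.
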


Although universal properties mean $(\mathcal{A}_n ^{\dR} (Y), \mathbf{1})$ is unique up to unique isomorphism, the trivialisation of the underlying vector bundle above is not unique, as it depends on a choice of elements of $\HH^0 (Y,\Omega ^1 )$ defining a basis of $\HH^1 _{\dR} (Y)$. The trivialisation has some relation to the algebraic structure of the spaces $A_n ^{\dR}(b,x)$, which we now explain. For $x \in Y(K)$, it gives is a canonical isomorphism
\[
s_0 (b,x) : A_n^{\dR}(Y) \ \lra \ A_n ^{\dR}(Y)(b,x) := x^* \cA_n^{\dR}(Y).
\]

\par The left hand side has a natural algebra structure, by viewing it as a quotient of the tensor algebra of $V _{\dR}(Y)$. On the other hand, for all $x_1 ,x_2 ,x_3 \in Y(K)$ we have (see Appendix \S \ref{subsec:Tannakian}) maps
\[
A_n ^{\dR}(Y)(x_2 ,x_3 )\times A_n ^{\dR}(Y)(x_1, x_2) \ \lra \  A_n ^{\dR}(Y)(x_1 ,x_3 ).
\]
coming from the surjections $\Hom (x_i ^* ,x_j ^* )\to A_n ^{\dR}(Y)(x_i ,x_j )$ and the composition of natural transformations
\[
\Hom (x_2 ,x_3 )\times \Hom (x_1 ,x_2 )\ \lra \ \Hom (x_1 ,x_3 ). 
\]
\begin{lemma}\label{lemma:fml}
For all $f_1 ,f_2 \in A_n^{\dR}(Y)$, and all $x_1 ,x_2 ,x_3 \in Y(K)$, 
\[
s_0 (x_1 ,x_3 )(f_2 f_1 )=s_0 (x_2 ,x_3 )(f_2 )s_0 (x_1 ,x_2 )(f_1 ).
\]
\begin{proof}
As in the appendix, let $\mathcal{C}^{\dR}(Y)_n $ denote the category of connections which are $i$-unipotent for $i\leq n$. Let $x_{1n},x_{2n},x_{3n}$ denote the fibre functors corresponding to the points $x_1,x_2 ,x_3$. Let $\alpha _{x_i ,x_j }$ denote the isomorphism $A_n ^{\dR}(Y)(x_i ,x_j )\simeq \Hom (x_{in},x_{jn})$. Then $\alpha _{x_2 ,x_3 }(s_0 (x_2 ,x_3 )(f_2 )) \in \Hom (x_{2n},x_{3n})$, and $s_0 (x_1 ,x_2 )(f_1 ) \in x_2 ^* \cA _n ^{\dR}(Y)$, and to prove the lemma it is enough to prove that
\[
(\alpha _{x_2 ,x_3 }(s_0 (x_2 ,x_3 )(f_2 )))(s_0 (x_1 ,x_2 )(f_1 ))=s_0 (x_1 ,x_3 )(f_2 f_1 )
\]
in $x_3 ^* \cA _n ^{\dR}(Y)$.
\par To prove this, note that there is a morphism of connections $F_1 : \cA _n ^{\dR}(Y) \to \cA _n ^{\dR}(Y)$ given by sending $v$ to $vf_1 $. Hence the lemma follows from commutativity of
\[
\begin{tikzpicture}
\matrix (m) [matrix of math nodes, row sep=3em,
column sep=3em, text height=1.5ex, text depth=0.25ex]
{x_2 ^* \cA _n ^{\dR}(Y) & x_3 ^* \cA _n ^{\dR}(Y)  \\
 x_2 ^* \cA _n ^{\dR}(Y) & x_3 ^* \cA _n ^{\dR}(Y). \\};
\path[->]
(m-1-1) edge[auto] node[auto]{$x_2 ^* F$} (m-2-1)
edge[auto] node[auto] { $\alpha$ } (m-1-2)
(m-2-1) edge[auto] node[auto] {$\alpha$ } (m-2-2)
(m-1-2) edge[auto] node[auto] {$x_3 ^* F$} (m-2-2);
\end{tikzpicture}
\]
where $\alpha :=\alpha _{x_2 ,x_3 }(s_0 (x_2 ,x_3 )(f_2 ))(\cA _n ^{\dR}(Y))$
\end{proof}
\end{lemma}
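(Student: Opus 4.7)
The plan is to apply the universal property of Theorem~\ref{thm:kim_connection} to promote right multiplication by $f_1$ to a morphism $F_1$ of unipotent connections, and then to exploit naturality in the Tannakian category $\cC^{\dR}(Y)_n$ to obtain the desired compatibility between algebra multiplication on $A_n^{\dR}(Y)$ and path composition.

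First, I would construct $F_1$. Viewing $f_1 \mathbf{1}$ as a global section of $\cA_n^{\dR}(Y)$ via the trivialisation $A_n^{\dR}(Y) \otimes \mathcal{O}_Y$, Theorem~\ref{thm:kim_connection} furnishes a unique morphism of connections $F_1 : \cA_n^{\dR}(Y) \to \cA_n^{\dR}(Y)$ with $F_1(\mathbf{1}) = f_1 \mathbf{1}$. Equivalently, one may check directly that the $\mathcal{O}_Y$-linear extension of right multiplication by $f_1$ is flat with respect to $\nabla_n$: by formula~(\ref{eqn:connection-AY}), $\nabla_n$ acts on $\cA_n^{\dR}(Y)$ as left multiplication by the $A_n^{\dR}(Y)$-valued one-form $-\sum_i T_i \otimes \omega_i$, and left and right multiplication commute in the associative algebra $A_n^{\dR}(Y)$.

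Next, I would interpret the path element $s_0(x_2,x_3)(f_2)$ Tannakianly. The identification $A_n^{\dR}(Y)(x_2, x_3) \simeq \Hom(x_{2n}^*, x_{3n}^*)$ turns $s_0(x_2, x_3)(f_2)$ into a natural transformation $\alpha$, characterised by $\alpha(\cA_n^{\dR}(Y))(\mathbf{1}|_{x_2}) = s_0(x_2, x_3)(f_2)$. Naturality of $\alpha$ with respect to the morphism $F_1$ is exactly the commutativity of the square in the statement of the lemma. The lemma then follows by chasing $\mathbf{1}|_{x_2} \in x_2^*\cA_n^{\dR}(Y)$ around the square. Down then right: $x_2^*F_1$ sends $\mathbf{1}|_{x_2}$ to $(f_1 \mathbf{1})|_{x_2} = s_0(x_1, x_2)(f_1)$, and $\alpha$ then performs path composition with $s_0(x_2,x_3)(f_2)$, producing $s_0(x_2, x_3)(f_2) \cdot s_0(x_1, x_2)(f_1)$. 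Right then down: $\alpha$ sends $\mathbf{1}|_{x_2}$ to $s_0(x_2, x_3)(f_2)$ by construction, and $x_3^*F_1$, acting as right multiplication by $f_1$ in the trivialisation, sends this to $(f_2 f_1 \mathbf{1})|_{x_3} = s_0(x_1, x_3)(f_2 f_1)$. Equating the two outputs gives the claimed identity.

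The main obstacle is notational bookkeeping: one must consistently track each element of a fibre $x^*\cA_n^{\dR}(Y)$ both as a value of the bundle trivialisation and as a path-space element under the Tannakian correspondence, and verify that the identification of $\alpha$ with the path it represents matches these conventions. The universal property of $(\cA_n^{\dR}(Y), \mathbf{1})$ is precisely what rigidifies these identifications, so once the dictionary is set up the chase is mechanical.
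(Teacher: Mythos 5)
Your proposal is correct and takes essentially the same route as the paper: you realise right multiplication by $f_1$ as a horizontal endomorphism $F_1$ of $\cA_n^{\dR}(Y)$, use naturality of the transformation $\alpha$ corresponding to $s_0(x_2,x_3)(f_2)$ with respect to $F_1$, and chase the element $\mathbf{1}|_{x_2}$ around the resulting commutative square. The only difference is presentational: you spell out the flatness check for $F_1$ (via the connection acting by left multiplication) and the diagram chase, both of which the paper leaves implicit.
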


The following result describes the relation between $\cA_n^{\dR}(X)$ and $\cA_n^{\dR}(Y)$, see also \cite{BD17}. 

\begin{corollary}\label{cor:AX-and-AY}
The connection $\cA_n^{\dR}(X)|_Y$ is the maximal quotient of $\cA_n^{\dR}(Y)$ which extends to a holomorphic connection (i.e. without log singularities) on the whole of $X$.
\begin{proof}
It is enough to show that, for any surjection of left $\pi _1 ^{\dR}(Y,b)$-modules 
\[
p :A_n ^{\dR}(Y) \lra N,
\]
the associated connection $\mathcal{N}$ extends to a connection on $X$ without log singularities if and only if $p$ factors through the surjection $A_n ^{\dR}(Y) \lra A_n ^{\dR}(X)$. The latter occurs if and only if $N$ is the pullback of a left $\pi _1 ^{\dR}(X,b)$-module. The corollary follows by the Tannaka equivalence between left $\pi _1 ^{\dR}(X,b)$-modules, and unipotent connections on $X$.
\end{proof}
\end{corollary}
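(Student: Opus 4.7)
The plan is to prove the corollary entirely within the Tannakian framework set up in Theorem~\ref{thm:kim_connection} and the appendix, reducing it to two applications of the universal property of $\cA_n^{\dR}(Y)$ together with the natural surjection of fundamental groups $\pi_1^{\dR}(Y,b) \twoheadrightarrow \pi_1^{\dR}(X,b)$ induced by the open immersion $Y \hookrightarrow X$. The idea is that a quotient of $\cA_n^{\dR}(Y)$ corresponds to an $n$-unipotent $\pi_1^{\dR}(Y,b)$-subrepresentation of $A_n^{\dR}(Y)$, and the condition of extending holomorphically to $X$ should cut out precisely those quotients that factor through the quotient corresponding to $\pi_1^{\dR}(X,b)$.

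First I would verify that $\cA_n^{\dR}(X)|_Y$ is canonically a quotient of $\cA_n^{\dR}(Y)$. Since $\cA_n^{\dR}(X)|_Y$ is an $n$-unipotent connection on $Y$ equipped with the distinguished identity section, Theorem~\ref{thm:kim_connection} applied to that section produces a unique morphism
\[
q \ :\ \cA_n^{\dR}(Y) \ \lra \ \cA_n^{\dR}(X)|_Y,
\]
and $q$ is surjective, e.g.\ because under the Tannakian correspondence it is identified with the quotient $A_n^{\dR}(Y) \twoheadrightarrow A_n^{\dR}(X)$ of universal enveloping algebras coming from the surjection of de Rham fundamental groups.

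For the maximality assertion, suppose $p \colon \cA_n^{\dR}(Y) \twoheadrightarrow \mathcal{N}$ is a quotient whose underlying connection extends to a holomorphic $n$-unipotent connection $\widetilde{\mathcal{N}}$ on $X$. Let $\tilde v$ denote the section of $\widetilde{\mathcal{N}}$ whose restriction to $Y$ equals $p(\mathbf{1})$. Applying the universal property of $(\cA_n^{\dR}(X), \mathbf{1}_X)$ on $X$ to the pair $(\widetilde{\mathcal{N}}, \tilde v)$ yields a unique morphism $f \colon \cA_n^{\dR}(X) \to \widetilde{\mathcal{N}}$ with $f(\mathbf{1}_X) = \tilde v$. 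Restricting $f$ to $Y$ and precomposing with $q$ gives a morphism $\cA_n^{\dR}(Y) \to \mathcal{N}$ sending $\mathbf{1}$ to $p(\mathbf{1})$; by the uniqueness clause of Theorem~\ref{thm:kim_connection}, it must coincide with $p$, so $p$ factors through $q$, proving that $\cA_n^{\dR}(X)|_Y$ is indeed the maximal such quotient.

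The main obstacle, and what the above sketch hides, is the precise Tannakian characterisation of the essential image of the restriction functor $\mathcal{C}^{\dR}(X)_n \hookrightarrow \mathcal{C}^{\dR}(Y)_n$: a unipotent connection on $Y$ extends without log singularities to $X$ if and only if the corresponding $\pi_1^{\dR}(Y,b)$-representation is pulled back from $\pi_1^{\dR}(X,b)$. Granting this equivalence, the surjection of fundamental groups gives the quotient $A_n^{\dR}(Y) \twoheadrightarrow A_n^{\dR}(X)$ used above, and the rest of the argument is formal; without it, one needs to verify directly that holomorphic extendability of $\widetilde{\mathcal{N}}$ is detected by the kernel of $A_n^{\dR}(Y) \twoheadrightarrow A_n^{\dR}(X)$, which is the only substantive input beyond universal properties.
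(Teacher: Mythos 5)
Your overall strategy is viable and is close in spirit to the paper's, but packaged differently: the paper argues on the representation side (a quotient connection of $\cA_n^{\dR}(Y)$ extends holomorphically if and only if the corresponding left module is pulled back along $\pi_1^{\dR}(Y,b)\twoheadrightarrow\pi_1^{\dR}(X,b)$, and then invokes the Tannaka equivalence for $X$), whereas you apply the universal property of the pointed object $\cA_n^{\dR}(X)$ directly to an extension $\widetilde{\mathcal{N}}$ and conclude by uniqueness over $Y$. As written, however, there are two genuine gaps. First, the section $\tilde v$ need not exist: $p(\mathbf{1})$ is a regular section of $\mathcal{N}=\widetilde{\mathcal{N}}|_Y$ over the affine chart only, and a section over $Y$ of a bundle on the projective curve $X$ in general acquires poles along $X\setminus Y$; nothing in your setup rules this out (already for the universal objects themselves the distinguished element does not come from a global section on $X$). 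The repair is that the universal property you need on $X$ is the pointed one of \S\ref{subsec:Tannakian}, where a pointed object is a pair $(V,v)$ with $v$ in the \emph{fibre} $b^*V$; the formulation of Theorem~\ref{thm:kim_connection} in terms of honest sections is special to the affine chart, where $\cA_n^{\dR}(Y)$ is trivialised. Taking $v=b^*(p(\mathbf{1}))\in b^*\widetilde{\mathcal{N}}$, your factorisation-by-uniqueness argument goes through unchanged.

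Second, to invoke the universal property of $\cA_n^{\dR}(X)$ you need $\widetilde{\mathcal{N}}$ to be $n$-unipotent as a connection on $X$, and you have smuggled this into the hypothesis; the corollary only assumes an extension without log singularities. This is true but requires an argument, e.g.: a horizontal section over $Y$ of a connection holomorphic on all of $X$ extends without poles (compare pole orders in $\nabla s=0$), and a horizontal section vanishing at a point vanishes identically, so the unipotent filtration of $\mathcal{N}$ extends to $\widetilde{\mathcal{N}}$. The same circle of facts underlies the surjectivity of $\pi_1^{\dR}(Y,b)\to\pi_1^{\dR}(X,b)$ that you use to see that $q$ is surjective. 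Relatedly, your closing paragraph defers the ``essential image'' characterisation as something to be granted; that characterisation is essentially the content of the paper's (admittedly terse) proof, but note that your direct argument, once patched as above, actually delivers the corollary without needing it in full, which is a genuine merit of your route.
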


% --------------------------------------------------
\subsection{The Hodge filtration on $\cA_n^{\dR}$. }
\label{subsec:computing-conn}
In what follows, we will need to explicitly compute the Hodge filtration of $\cA_2^{\dR}$, or rather of a certain quotient $\cA_Z$. To this end, we now state a characterisation of this Hodge filtration via a universal property, due to Hadian \cite{H11}.

\par Recall that a \textit{filtered connection} is defined to be a connection $(\mathcal{V},\nabla)$ on $X$, together with an exhaustive descending filtration 
\[
\ldots \supset \Fil^i \mathcal{V} \supset \Fil^{i+1}\mathcal{V} \supset \ldots
\]
satisfying the Griffiths transversality property
\[
\nabla (\Fil^i \mathcal{V})\subset (\Fil^{i-1}\mathcal{V})\otimes \Omega ^1 .
\]
A morphism of filtered connections is one that preserves the filtrations and commutes with $\nabla$. 

\par The universal $n$-unipotent bundle $\cA_n^{\dR}(b)$ is associated to the $\pi_1^{\dR}(X,b)$-representation $A_n^{\dR}(b)$, and there is a natural exact sequence of representations 
\[
0 \lra I^n / I^{n+1} \lra A_n^{\dR}(b) \lra A_{n-1}^{\dR}(b) \lra 0
\]
where the kernel $I^n / I^{n+1}$ has trivial $\pi_1^{\dR}(X,b)$-action. This means that the kernel
\[
\cA^{\dR}[n] \ := \ \mathrm{Ker}\left(\cA_n ^{\dR}(b) \lra \cA_{n-1}^{\dR}(b) \right) \ \simeq \ I^n /I^{n+1} \otimes \mathcal{O}_X ,
\]
is a trivial bundle with connection. The natural surjection $(I/I^2 )^{\otimes n} \lra I^n /I^{n+1}$ gives rise to a surjection $V_{\dR}^{\otimes n}\otimes \mathcal{O}_X \lra \cA^{\dR}[n]$. The Hodge filtration on $V_{\dR}$ gives $\cA^{\dR}[n]$ its structure of a filtered connection.  As explained in \cite{BD17}, the Hodge filtration on $\cA_n ^{\dR}(b)$ may now be characterised using Hadian's universal property, proved in \cite{H11}. 

\begin{theorem}[Hadian]\label{thm:Hadian}
For all $n>0$, the Hodge filtration $\Fil^{\bullet}$ on $\cA_n ^{\dR}(b)$ is the unique filtration such that 
\begin{itemize}
\item $\Fil^{\bullet}$ makes $(\cA_n ^{\dR}(b),\nabla )$ into a filtered connection, \\
\item The sequence
\[
V_{\dR}^{\otimes n}\otimes \mathcal{O}_X \lra \cA_n ^{\dR}(b) \lra \mathcal{A}_{n-1} ^{\dR}(b) \lra 0
\]
is a sequence of filtered connections. 
\item The identity element of $A_n^{\dR}(b)$ lies in $\Fil^0 A_n ^{\dR} (b)$.
\end{itemize}
\end{theorem}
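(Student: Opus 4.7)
The plan is to prove the theorem by induction on $n$, with existence and uniqueness treated separately. Existence will be handled by constructing the Hodge filtration explicitly: the trivial filtration on the subobject $V_{\dR}^{\otimes n}\otimes \mathcal{O}_X$ (namely the induced tensor-power filtration on the fiber) combined with the Hodge filtration on $\cA_{n-1}^{\dR}(b)$ already constructed inductively pins down the filtration on $\cA_n^{\dR}(b)$ up to the extension data, and we can fix this extension data by declaring $\mathbf{1}\in\Fil^0$. The resulting filtration satisfies Griffiths transversality because $\nabla_n$ in \eqref{eqn:connection-AY} lowers the Hodge weight by exactly one (each $\omega_i$ is in $\Fil^1 \Omega^1$), and it agrees with the Hodge filtration coming from the comparison with the complex-analytic theory of unipotent fundamental groups developed by Hain.

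For uniqueness, the base case is $n=0$: here $\cA_0^{\dR}(b)=\mathcal{O}_X$, and condition (2), which in this range reduces to requiring that $\Q_p\otimes \mathcal{O}_X\to \cA_0^{\dR}(b)$ is filtered, together with the boundedness and exhaustiveness implicit in the Hodge filtration on the constant connection $\Q_p$, forces $\Fil^0=\mathcal{O}_X$ and $\Fil^1=0$. Condition (3) is then automatic.

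For the inductive step, suppose $\Fil$ and $\Fil'$ are two filtrations on $\cA_n^{\dR}(b)$ satisfying (1)--(3). By the inductive hypothesis applied to the induced filtrations on the quotient $\cA_{n-1}^{\dR}(b)$ (noting that the three conditions pass to the quotient because the marked section projects to the identity of $A_{n-1}^{\dR}(b)$), the two filtrations agree on $\cA_{n-1}^{\dR}(b)$. By condition (2) applied to both filtrations and the fact that the Hodge filtration on $V_{\dR}^{\otimes n}$ is intrinsic, they induce the same filtration on the subobject $\cA^{\dR}[n]$ as well. The set of Griffiths-transversal filtrations on $\cA_n^{\dR}(b)$ compatible with these fixed filtrations on sub and quotient forms a torsor under $\Hom$ in an appropriate category of filtered flat bundles on $X$: concretely, the difference $\Fil^i - \Fil'^i$ determines for each $i$ a morphism of flat bundles $\Fil^i\cA_{n-1}^{\dR}(b)\to \cA^{\dR}[n]/\Fil^i\cA^{\dR}[n]$. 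To conclude $\Fil=\Fil'$ we use condition (3): both filtrations contain $\mathbf{1}$ in $\Fil^0$, and since $\mathbf{1}$ maps to the identity element of $A_{n-1}^{\dR}(b)$, which generates this bundle as a module under the action of $\pi_1^{\dR}(X,b)$ induced by $\nabla$, the extension class is uniquely determined.

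The main obstacle is the last step, namely making precise that condition (3) rigidifies the torsor of extension filtrations. The cleanest way to do this is via the Tannakian universal property: one shows that with the constructed Hodge filtration, $(\cA_n^{\dR}(b),\nabla,\Fil,\mathbf{1})$ is the universal object in the category of pointed $n$-unipotent filtered flat bundles on $X$ whose marked section lies in $\Fil^0$ and whose kernel $\cA^{\dR}[n]$ receives the standard Hodge filtration from $V_{\dR}^{\otimes n}$. Applying this universal property to another filtration $\Fil'$ satisfying the same three conditions yields a unique endomorphism of $\cA_n^{\dR}(b)$ fixing $\mathbf{1}$ and mapping $(\cA_n^{\dR}(b),\Fil)$ to $(\cA_n^{\dR}(b),\Fil')$ as filtered connections; by Kim's universal property (Theorem~\ref{thm:kim_connection}) applied in the unfiltered category, this morphism must be the identity, so $\Fil\subset \Fil'$, and symmetry gives equality. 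Verifying that the universal property holds in the filtered category, as opposed to the unfiltered one given by Theorem~\ref{thm:kim_connection}, is the technical heart of the argument and is where one must follow Hadian's treatment in~\cite{H11} closely.
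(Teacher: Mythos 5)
First, note that the paper does not actually prove this statement: Theorem~\ref{thm:Hadian} is quoted from Hadian~\cite{H11} (via~\cite{BD17}), so there is no internal proof to compare against; what the paper does contain is the explicit use of the theorem in \S\ref{subsec:AZ-filtration}, and that computation is a good test of any proposed proof. Measured against it, your plan has a genuine gap at precisely the point you label the technical heart. The difference of two filtrations inducing the same filtrations on $\cA^{\dR}[n]$ and $\cA_{n-1}^{\dR}(b)$ is only an $\mathcal{O}_X$-linear map $\Fil^i\cA_{n-1}^{\dR}(b)\to \cA^{\dR}[n]/\Fil^i\cA^{\dR}[n]$: the steps of a Hodge filtration are not $\nabla$-stable (only Griffiths transversal), so neither the source nor the map is flat, and your subsequent argument --- that vanishing on $\mathbf{1}$ together with ``generation under $\pi_1^{\dR}(X,b)$'' forces the map to vanish --- does not go through, because condition (3) is a condition on the single fibre at $b$ and does not propagate along a non-horizontal map. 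The same objection applies to the existence step, where you claim that declaring $\mathbf{1}\in\Fil^0$ ``fixes the extension data'': a fibre condition at one point cannot pin down a choice of subbundles.

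The missing ingredient is the projectivity of $X$, which your plan never invokes, and without which the statement is false: on the affine curve $Y$, in the trivialisation $s_0$ of \S\ref{subsec:AZ-filtration}, any gauge $s^{\Fil}$ with arbitrary functions $\boldsymbol{\beta}$ and an arbitrary function $\gamma$ vanishing at $b$ yields a filtration satisfying (1)--(3) over $Y$. The actual rigidity mechanism (both in~\cite{H11} and in the paper's computation) is global: requiring the $\Fil^i$ to be subbundles of $\cA_n^{\dR}(b)$ on all of $X$ forces the ambiguity to consist of globally holomorphic data, which properness --- via the Riemann--Roch argument of Lemma~\ref{lemma:silly_RR} and the behaviour at the points of $D=X\setminus Y$ --- reduces to finitely many constants; only then do condition (2) on the graded pieces and the normalisation (3) at $b$ eliminate what remains. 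Your proposed rescue, a universal property in the category of pointed filtered flat bundles, is exactly the statement that needs proving and is deferred to~\cite{H11}, so the core of the argument is absent; and even granting it, universality of $(\cA_n^{\dR}(b),\Fil)$ alone gives only $\Fil^i\subset\Fil'^i$ (you do not know that $(\cA_n^{\dR}(b),\Fil')$ is universal), so a further strictness or rank comparison is needed to conclude equality.
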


% -------------------------------------------------------------
\subsection{The filtered connection $\cA _Z $}
\label{subsec:bundle-AZ} 

\par As in~\S \ref{sec:heights}, a central role is played by a Tate class, which will come from an algebraic cycle on $X \times X$. Since the contribution to the $p$-adic height is entirely through its realisation as a $p$-adic de Rham class, we phrase things in this language. Henceforth, let 
\[
Z=\sum Z_{ij}\omega _i \otimes \omega _j \in \HH^1 _{\dR}(X)\otimes \HH^1 _{\dR} (X)
\]
be a nonzero cohomology class satisfying the following conditions:
\begin{enumerate}[(a)]
\item \label{pa} $Z$ is in $(\HH^1 _{\dR}(X)\otimes \HH^1 _{\dR}(X))^{\phi =p}$.
\item \label{pb} $Z$ is in $\Fil^{1}(\HH^1 _{\dR} (X) \otimes \HH^1 _{\dR} (X))$. 
\item \label{pc} $Z$ maps to zero under the cup product
\[
\cup \ : \ \HH^1 _{\dR} (X) \otimes \HH^1 _{\dR}(X) \lra \HH^2 _{\dR}(X).
\]
\item \label{pd} $Z$ maps to zero under the symmetrisation map
\[
\HH^1_{\dR}(X) \otimes \HH^1_{\dR} (X) \lra \Sym^2 \HH^1_{\dR} (X).
\]
\end{enumerate}
By property~\eqref{pd}, we may henceforth think of $Z$ as an element of $\HH^2_{\dR}
(\JacX_{\Q_p})$. It follows from Lemma~\ref{lemma:kunneth} 
that the Tate class associated to a \nice correspondence satisfies these properties. Though we will not need it in the sequel, the following result gives a converse to this statement.

\begin{lemma}\label{lemma:general_tate_class}
Let $Z$ be a class satisfying properties \eqref{pa}--\eqref{pd}. If $\rho (\JacX)=\rho (\JacX_{\Q_p})$, then there exists a \nice element of $\Pic(X\times X)\otimes \Q_p$ mapping to $Z$.
\begin{proof}
By the Tate conjecture for $\HH^2$ of abelian varieties over finite fields, property ~\eqref{pa} 
of $Z$ guarantees that it comes from a divisor on $\JacX_{\F_p }$. By the $p$-adic
Lefschetz (1,1)-theorem of Berthelot--Ogus \cite[\S 3.8]{BO83}, property~\eqref{pb}
implies that it lifts to something in $\NS (\JacX_{\Q _p })$. The hypotheses of the
theorem guarantee that this comes from a cycle on the Jacobian. Finally,
the element of $\Pic(X\times X)\otimes \Q_p$ corresponding to this cycle is \nice by property~\eqref{pc} of $Z$.
\end{proof}
\end{lemma}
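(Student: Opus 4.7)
The plan is to trace $Z$ through a sequence of cohomological realizations, descending all the way from $\HH^2_{\dR}(\JacX_{\Q_p})$ to $\NS(\JacX) \otimes \Q_p$, and then to convert the resulting N\'eron--Severi class back into an honest correspondence on $X \times X$ whose required properties can be checked from what remains of the hypotheses.

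First I would use property~\eqref{pd} together with the K\"unneth decomposition and the canonical identification $\HH^1_{\dR}(\JacX_{\Q_p}) \simeq \HH^1_{\dR}(X_{\Q_p})$ to view $Z$ as an element of $\wedge^2 \HH^1_{\dR}(X_{\Q_p}) \subset \HH^2_{\dR}(\JacX_{\Q_p})$. Since $\JacX$ has good reduction at $p$, the comparison isomorphism $\HH^2_{\dR}(\JacX_{\Q_p}) \simeq \HH^2_{\cris}(\JacX_{\F_p}) \otimes_{\Z_p} \Q_p$ transports $Z$ to a crystalline class, and property~\eqref{pa} places this class in the $\phi = p$ eigenspace.

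Next I would invoke the Tate conjecture for divisors on abelian varieties over finite fields in its crystalline form: the cycle class map
\[
\NS(\JacX_{\F_p}) \otimes \Q_p \ \lra \ \HH^2_{\cris}(\JacX_{\F_p})(1) \otimes \Q_p
\]
has image exactly the Frobenius-fixed subspace, so the reduction of $Z$ lifts to an element of $\NS(\JacX_{\F_p}) \otimes \Q_p$. To upgrade this lift to characteristic zero I would apply the $p$-adic Lefschetz $(1,1)$ theorem of Berthelot--Ogus: a class in $\NS(\JacX_{\F_p}) \otimes \Q_p$ lifts to $\NS(\JacX_{\Q_p}) \otimes \Q_p$ precisely when its image in $\HH^2_{\dR}(\JacX_{\Q_p})$ lies in $\Fil^1$, which is exactly property~\eqref{pb}. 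The hypothesis $\rho(\JacX) = \rho(\JacX_{\Q_p})$ then renders the natural inclusion $\NS(\JacX) \otimes \Q_p \hookrightarrow \NS(\JacX_{\Q_p}) \otimes \Q_p$ an isomorphism, so $Z$ descends to a class in $\NS(\JacX) \otimes \Q_p$.

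Finally I would turn this N\'eron--Severi class back into a correspondence. By Mumford, $\NS(\JacX) \otimes \Q$ is identified with the Rosati-fixed subspace of $\End(\JacX) \otimes \Q$, and any such endomorphism is realized by a symmetric class in $\Pic(X \times X) \otimes \Q$ via the standard Pic--End dictionary using the Abel--Jacobi map $X \to \JacX$; tensoring with $\Q_p$ then produces the desired element of $\Pic(X\times X) \otimes \Q_p$. To conclude, the resulting class is nice in the sense of the paper: it is nontrivial since $Z \neq 0$, it is symmetric by construction, and its trace vanishes by property~\eqref{pc} via the second assertion of Lemma~\ref{lemma:kunneth}. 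I expect the main obstacle to be the bookkeeping of identifications across the various realizations---\'etale, de Rham, crystalline, and N\'eron--Severi over both $\F_p$ and $\Q_p$---rather than any single deep step; the two serious inputs, namely Tate's theorem on divisors on abelian varieties over finite fields and the Berthelot--Ogus lifting criterion, are invoked directly.
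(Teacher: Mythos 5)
Your proof follows exactly the same route as the paper's: property (a) plus Tate's theorem for divisors on abelian varieties over finite fields produces a class in $\NS(\JacX_{\F_p})\otimes\Q_p$, property (b) plus the Berthelot--Ogus $p$-adic Lefschetz $(1,1)$ theorem lifts it to $\NS(\JacX_{\Q_p})\otimes\Q_p$, the hypothesis $\rho(\JacX)=\rho(\JacX_{\Q_p})$ descends it to $\NS(\JacX)\otimes\Q_p$, the Rosati/N\'eron--Severi dictionary produces the correspondence, and property (c) gives trace zero. Your version is simply more explicit about the K\"unneth bookkeeping and the appeal to Lemma~\ref{lemma:kunneth}, but there is no substantive difference.
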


\par We now come to the definition of the main object of this section and the next. Recall that we have a filtered $F$-isocrystal $\mathcal{A}_2^{\dR}$ on $X$, with an exact sequence
\begin{equation}\label{eqn:ses-filtration}
0 \lra \cA^{\dR}[2] \lra \mathcal{A}_2 ^{\dR} \lra \mathcal{A}_{1} ^{\dR} \lra 0,
\end{equation}
and there is an isomorphism 
\[
\cA^{\dR}[2] \simeq \mathrm{Coker}\left(\Q_p(1) \stackrel{\cup^*}{\lra} V_{\dR}^{\otimes 2}\right) \otimes \mathcal{O}_X. 
\]
Define $\cA_Z(b)$, or simply $\cA_Z$, to be the quotient of $\cA^{\dR}_2$ obtained by pushing out \eqref{eqn:ses-filtration} along
\[ 
Z \ :  \ V_{\dR} \otimes V_{\dR} \ \lra \ \Q_p(1),
\]
which by property~\eqref{pc} of $Z$ factors through $V_{\dR}^{\otimes 2} / \mathrm{Im}\ \cup^*$. The importance of this definition lies in the fact that non-abelian comparison isomorphisms naturally endow it with the structure of a filtered $F$-isocrystal, as we will see in \S\ref{sec:compfrob}, which has the property that for all Teichm\"uller points $x$ we have an isomorphism of filtered $\phi$-modules
\[ 
x^* \cA_Z \simeq \D_{\cris}(A_Z (b,x)).
\]
The $F$-structure on $\cA_Z$ is the subject of \S\ref{sec:compfrob}, and in the remainder of this section we will explicitly compute the connection and Hodge filtration on $\cA_Z$.

\par Using the results of \S \ref{subsec:A2-filtration}, we may describe the connection of $\cA_Z$ explicitly on the affine open $Y$. We use the notation of \S \ref{subsec:notation}, and denote the matrix of the correspondence $Z$ on $\HH^1_{\dR}(X)$ also by $Z$, where we act on column vectors. Then via Theorem \ref{thm:kim_connection}, we obtain a trivialisation
\[
s_0 :  \mathcal{O}_Y \otimes \left( \Q_p \oplus V_{\dR} \oplus \Q_p (1) \right) \stackrel{\sim}{\lra} \cA_Z |_Y. 
\]
By Corollary \ref{cor:AX-and-AY} and the explicit description of the connection on $\cA_n^{\dR}(Y)$ given in \eqref{eqn:connection-AY}, we have that the connection $\nabla$ on $\mathcal{A}_Z$ via the trivialisation $s_0$ is given by
\begin{equation}\label{eqn:connection-AZ-onY}
s_0^{-1} \circ \nabla \circ s_0 = d + \Lambda, \hspace{2cm} \mbox{where} \quad 
\Lambda := -\left( \begin{array}{ccc}
0 & 0 & 0 \\
\boldsymbol{\omega} & 0 & 0 \\
\eta & \boldsymbol{\omega}^{\intercal}Z & 0 \\
\end{array} \right),
\end{equation}
for some differential $\eta$ of the third kind on $X$. This differential is uniquely determined by the conditions that it is in the space spanned by $\omega_{2g}, \ldots , \omega_{2g+d-2}$, and that the connection $\nabla$ extends to a holomorphic connection on the whole of $X$, as in Corollary \ref{cor:AX-and-AY} (by a differential of the third kind, we mean a differential all of whose poles have order one).

\begin{Remark}In the notation above, and henceforth in this paper, block matrices are taken with respect to the $2$-step unipotent filtration with basis $\mathbf{1},T_0,\ldots,T_{2g-1},S$, and we use the notation $\boldsymbol{\omega}$ for the column vector with entries $\omega_0, \ldots, \omega_{2g-1}$. 
\end{Remark}

% --------------------------------------------------
\subsection{The Hodge filtration of $\cA_Z$. }
\label{subsec:AZ-filtration}
We now make the Hodge filtration on $\cA_Z$ explicit. We will use Theorem \ref{thm:Hadian}, and our knowledge of the Hodge filtration on the first quotient $\cA_1$, to uniquely determine the Hodge filtration on $\cA_Z $ using the exact sequence of filtered connections
\[ 
0 \lra \Q_p (1) \otimes \mathcal{O}_X \lra \cA_Z \lra \cA_1 \lra 0. 
\] 
Since $\cA_1 \mid_Y \simeq \left(V_{\dR} \oplus \Q_p \right) \otimes \mathcal{O}_Y$ as filtered connections, we know that 
\[
\Fil^0 \cA_1 \mid_Y = \left(\Q_p \oplus \Fil^0 V_{\dR} \right) \otimes \mathcal{O}_Y, 
\]
Recall the explicit description of the connection $\nabla$ on the restriction of $\cA_Z$ to $Y$, given by \eqref{eqn:connection-AZ-onY} with respect to the basis $\mathbf{1},T_0,\ldots,T_{2g-1},S$. In analogy with the notation of~\S \ref{subsec:local_height_p}, we may specify the Hodge filtration by giving an isomorphism of filtered vector bundles
\[
s^{\Fil} \ :\  \left( \Q_p \oplus V_{\dR} \oplus \Q_p (1) \right) \otimes \mathcal{O}_Y \ \stackrel{\sim}{\lra} \ \cA_Z |_Y,
\]
where the filtration on the left hand side is induced from the Hodge filtration on its graded pieces. Such a morphism $s^{\Fil}$ is uniquely determined by the vector $\boldsymbol{\beta}_{\Fil}$ and $\gamma_{\Fil} \in \HH^0(Y,\mathcal{O}_Y)$ in
\[
s_0^{-1} \circ s^{\Fil} = \left( \begin{array}{ccc}
1 & 0 & 0 \\
0 & 1 & 0 \\
\gamma_{\Fil} & \boldsymbol{\beta}_{\Fil}^{\intercal} & 1 \\
\end{array}
\right).
\]

The conditions imposed by Theorem \ref{thm:Hadian} determine $\gamma_{\Fil}$ and $\boldsymbol{\beta}_{\Fil}$ uniquely, as we will now explain. At each point $x$ in $D = X-Y$, defined over $K$, let 
\[
s_x \ : \ 
\Big(
(\Q_p \oplus V_{\dR}\oplus \Q _p (1))\otimes K \llbracket t_x \rrbracket,\ d \ \Big)
\stackrel{\sim}{\lra} 
\left( \cA_Z |_{K \llbracket t_x \rrbracket },\ \nabla \ \right) 
\]
be a trivialisation of $\cA_Z$ in a formal neighbourhood of $x$, with local parameter $t_x$. The difference between the bundle trivialisations defines a gauge transformation 
\[
G_x := s_x^{-1}\circ s_0 \ \in \  \mathrm{End}\big( \Q _p \oplus V_{\dR} \oplus \Q _p (1) \big) \otimes K (\! (t_x )\! )
\]
satisfying
\begin{equation}\label{eqn:gauge_transformation}
G_x^{-1}dG_x = \Lambda.
\end{equation}
Conversely, any such $G_x$ defines a trivialisation $s_x$. Expanding out \eqref{eqn:gauge_transformation} shows that $G_x$ is of the form
\begin{equation}\label{eqn:gauge_infty}
G_x = \left( \begin{array}{ccc}
1 & 0 & 0 \\
\boldsymbol{\Omega}_x & 1 & 0\\
g_x & \boldsymbol{\Omega}^{\intercal}_xZ & 1 \\
\end{array} \right),\quad
\mbox{ where }
\left\{ 
\begin{array}{lll}
d\boldsymbol{\Omega}_x &=& \boldsymbol{\omega}\\
dg_x &=& d\boldsymbol{\Omega}_x^{\intercal}Z\boldsymbol{\Omega}_x -
\boldsymbol{\Omega}_x^{\intercal}(Z-Z^{\intercal})\boldsymbol{\omega} -  \eta . \\
\end{array} 
\right.
\vspace{.2cm}
\end{equation}
Equivalently, the gauge transformation $G_x$ defines a basis of formal horizontal sections of $\cA_Z$ at $x$. By Theorem \ref{thm:Hadian}, $\Fil^0 \cA_Z \mid_Y$ extends to a bundle on $X$, which results in the condition that the functions in $\boldsymbol{\beta}_{\Fil}$ extend to holomorphic functions on $X$, and are hence constant, as well as the condition 
\[
g_x - \gamma_{\Fil} - \boldsymbol{\beta}^{\intercal}_{\Fil} \cdot \boldsymbol{\Omega}_x   \quad \in \quad K [\! [t_x ]\! ],
\]
see also \cite[\S 6.4]{BD17}. The existence and uniqueness follows from the following lemma. We omit its proof, which is an elementary argument using Riemann--Roch, and refer the reader to \cite[Lemma 25]{BD17} for a similar argument.
\begin{lemma}\label{lemma:silly_RR}
Given any tuple $(g_x ) \in \prod _{x \in D} K (\! (t_x )\! )$, there exists a unique vector of constants $\boldsymbol{\beta} \in K^{2g}$ and a function $\gamma \in \HH^0 (Y,\mathcal{O})$, unique modulo constants, such that for all $x\in D$,
\[
g_x - \gamma - \boldsymbol{\beta}^{\intercal} \cdot \boldsymbol{\Omega}_x   \quad \in \quad K [\! [t_x ]\! ].
\] 
\end{lemma}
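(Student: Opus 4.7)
The plan is to recast the assertion as surjectivity and kernel analysis for the linear map
\[
\Psi \ : \ \HH^0 (Y,\mathcal{O}) \oplus K^{2g} \ \lra \ \bigoplus _{x \in D} K(\!(t_x)\!)/K[\![t_x]\!], \qquad (\gamma, \boldsymbol{\beta}) \ \longmapsto \ \left( \gamma + \boldsymbol{\beta}^{\intercal} \boldsymbol{\Omega}_x \right)_{x \in D},
\]
and to control $\Psi$ by combining the Mittag--Leffler sequence for the open immersion $j : Y \hookrightarrow X$ with the Hodge--de Rham projection $\HH^1 _{\dR}(X) \twoheadrightarrow \HH^1 (X,\mathcal{O}_X)$.

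First I would write out the long exact cohomology sequence of $0 \to \mathcal{O}_X \to j_* \mathcal{O}_Y \to \bigoplus _{x \in D} (j_*\mathcal{O}_Y / \mathcal{O}_X)_x \to 0$; since $Y$ is affine, $\HH^1 (Y,\mathcal{O}_Y) = 0$, and the sequence reduces to
\[
0 \lra K \lra \HH^0 (Y,\mathcal{O}) \xrightarrow{\ P\ } \bigoplus _{x \in D} K(\!(t_x)\!)/K[\![t_x]\!] \lra \HH^1 (X,\mathcal{O}_X) \lra 0.
\]
The crux is then to identify the composite $K^{2g} \to \bigoplus _x K(\!(t_x)\!)/K[\![t_x]\!] \twoheadrightarrow \HH^1 (X,\mathcal{O}_X)$ sending $\boldsymbol{\beta}$ to the class of $(\boldsymbol{\beta}^{\intercal} \boldsymbol{\Omega}_x)_x$ with the Hodge projection $\HH^1 _{\dR}(X) \twoheadrightarrow \HH^1 (X,\mathcal{O}_X)$ expressed in the basis $\{[\omega_i]\}$. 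This is the standard \v{C}ech description of the Hodge projection on classes of second-kind differentials via local antiderivatives at the poles, which I would verify on the \v{C}ech cover consisting of $Y$ together with the formal neighborhoods $\spf \widehat{\mathcal{O}}_{X,x}$ for $x \in D$. Granting this, the composite $K^{2g} \to \HH^1 (X,\mathcal{O}_X)$ is surjective with kernel exactly $\HH^0 (X, \Omega^1 )$.

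From here the lemma follows by a diagram chase. For existence, given $(g_x)_x$, I would first pick $\boldsymbol{\beta} \in K^{2g}$ whose image in $\HH^1 (X,\mathcal{O}_X)$ matches the class of $(g_x)_x$; the corrected tuple then lies in the image of $P$ and thus equals the principal parts of some $\gamma \in \HH^0 (Y,\mathcal{O})$. For uniqueness, if $\gamma + \boldsymbol{\beta}^{\intercal} \boldsymbol{\Omega}_x \in K[\![t_x]\!]$ for all $x \in D$, then $\boldsymbol{\beta}^{\intercal} \boldsymbol{\omega} \in \HH^0 (X,\Omega^1 )$, so each $\boldsymbol{\beta}^{\intercal} \boldsymbol{\Omega}_x$ is already in $K[\![t_x]\!]$, forcing $\gamma$ to be regular at every point of $D$ and therefore constant on $X$. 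The residual $g$-dimensional freedom in $\boldsymbol{\beta}$ coming from $\HH^0 (X,\Omega^1 )$ is precisely what the normalization of $s^{\Fil}$ fixes in the application, and the only non-formal ingredient, hence the main obstacle, is the identification of the polar-part map with the Hodge--de Rham projection.
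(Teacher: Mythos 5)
The paper does not actually present a proof of this lemma (it is stated with the remark that the argument is an elementary application of Riemann--Roch, as in \cite[Lemma 25]{BD17}), so I can only judge your proposal on its own merits. Your existence argument is sound and is essentially a sheaf-cohomological repackaging of that Riemann--Roch argument: the Mittag--Leffler sequence for $j\colon Y\hookrightarrow X$ does give $0\to K\to \HH^0(Y,\mathcal{O})\to\bigoplus_{x\in D}K((t_x))/K[[t_x]]\to\HH^1(X,\mathcal{O}_X)\to 0$, and the class in $\HH^1(X,\mathcal{O}_X)$ of the polar parts of $(\boldsymbol{\beta}^{\intercal}\boldsymbol{\Omega}_x)_x$ is indeed the image of $\sum_i\beta_i[\omega_i]$ under the Hodge--de Rham projection, which is surjective because the $[\omega_i]$ form a basis of $\HH^1_{\dR}(X)$. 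Up to the (fixable) technical point of carrying out the \v{C}ech computation with formal disks, this half is fine.

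The genuine gap is in the uniqueness step. From $\gamma+\boldsymbol{\beta}^{\intercal}\boldsymbol{\Omega}_x\in K[[t_x]]$ for all $x\in D$ you may only conclude that $d\gamma+\boldsymbol{\beta}^{\intercal}\boldsymbol{\omega}$ is regular along $D$, hence lies in $\HH^0(X,\Omega^1)$; your assertion that $\boldsymbol{\beta}^{\intercal}\boldsymbol{\omega}$ itself is holomorphic presupposes the regularity of $d\gamma$ at $D$, which is what you are trying to prove, so the chain ``$\boldsymbol{\beta}^{\intercal}\boldsymbol{\Omega}_x\in K[[t_x]]$, hence $\gamma$ regular, hence constant'' does not follow. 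In fact the kernel of your map $\Psi$ has dimension $g+1$, not $1$: whenever $\sum_i\beta_{0,i}[\omega_i]\in\HH^0(X,\Omega^1)$ one may write $\boldsymbol{\beta}_0^{\intercal}\boldsymbol{\omega}=\eta_0+dh$ with $\eta_0$ holomorphic and $h\in\HH^0(Y,\mathcal{O})$, and then $(-h,\boldsymbol{\beta}_0)\in\ker\Psi$; in the typical case where some $K$-combination of $\omega_0,\dots,\omega_{2g-1}$ is already a global holomorphic form one can take $h=0$, so $\boldsymbol{\beta}$ is visibly not pinned down by the displayed membership conditions. Your closing sentence about a ``residual $g$-dimensional freedom in $\boldsymbol{\beta}$'' concedes exactly this, and it is incompatible both with your own uniqueness claim two sentences earlier and with the uniqueness asserted in the statement; deferring it to ``the normalization of $s^{\Fil}$ in the application'' is outside the lemma (the paper's normalization $\gamma_{\Fil}(b)=0$ only fixes the additive constant in $\gamma$, not this freedom). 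As written, then, your proposal proves existence but not uniqueness; to close it you would either have to prove uniqueness only modulo the $g$-dimensional subspace of $K^{2g}$ corresponding to $\HH^0(X,\Omega^1)$ and explain how the remaining conditions of Theorem~\ref{thm:Hadian} (not just extension of $\Fil^0$ to $X$) eliminate that freedom when determining $\boldsymbol{\beta}_{\Fil}$ and $\gamma_{\Fil}$, or supply an additional constraint beyond the displayed conditions that does so.
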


By the above lemma, we can determine the vector of constants $\boldsymbol{\beta}_{\Fil}$ uniquely, and $\gamma_{\Fil}$ is uniquely determined by the additional condition that $\gamma_{\Fil}(b) = 0$. In summary, this gives the following algorithm for computing the Hodge filtration on $\mathcal{A}_Z ^{\dR}$.
\begin{enumerate}[(i)]
\item Compute the differential $\eta$ as in~\eqref{eqn:connection-AZ-onY}, as the unique linear combination of $\omega_{2g}, \ldots , \omega_{2g+d-2}$ such that 
\[
d\boldsymbol{\Omega}_x^{\intercal}Z\boldsymbol{\Omega}_x -
\boldsymbol{\Omega}_x^{\intercal}(Z-Z^{\intercal})\boldsymbol{\omega} -  \eta
\]
has vanishing residue at all $x \in X \backslash Y$.\\
\item For all $x \in X \backslash Y$, compute power series for $\boldsymbol{\omega}_x$ and $\eta$ up to large enough precision, which means at least $\pmod{t_x^{d_x}}$, where $d_x$ is the order of the largest pole occurring. Use this to solve the system of equations \eqref{eqn:gauge_infty} for $g_x$ in $K (\! (t_x )\! )/K [\! [t_x ]\! ]$. \\
\item Compute the constants $\boldsymbol{\beta}_{\Fil}$ and function $\gamma_{\Fil}$ characterised $\gamma_{\Fil}(b)=0$ and 
\[
g_x - \gamma_{\Fil} - \boldsymbol{\beta}^{\intercal}_{\Fil} \cdot \boldsymbol{\Omega}_x   \quad \in \quad K [\! [t_x ]\! ], \qquad \forall x \in X\backslash Y.
\]
\end{enumerate}

% -------------------------------------------------------------
\section{Explicit computation of the $p$-adic height II: Frobenius}\label{sec:compfrob}

The preceding section gives a computationally feasible method for computing the Hodge filtration on $A_Z ^{\dR} (b,x)$. To complete the computation of the filtered $\phi $-module structure on $A_Z ^{\dR} (b,x)$ we need to describe the Frobenius structure. When $X$ is hyperelliptic, such a description is given in \cite[\S 6]{BD17} in terms of iterated Coleman integrals. It is also explained that the iterated integral description will not determine the $\phi $-action in general, but that for a general curve an alternative approach would be to compute the Frobenius structure on the isocrystal $\cA_Z^{\rig }$ (relative to the basepoint $b$), and pull this back at $x$. In this section, we carry out this strategy.

% -------------------------------------------------------------
\subsection{The Frobenius structure on $\cA_n^{\rig}$ and iterated Coleman integration }
\label{subsec:filteredF}

We now describe how $\cA_n^{\rig}$ obtains the structure of an $F$-isocrystal. Some background on unipotent isocrystals can be found in \S \ref{subsec:background_isocrystals}. Let $\mathcal{U} \subset \cY_{\F_p}$ be a Zariski open subset, and let $\mathfrak{X}, \mathfrak{Y}$ denote the formal completions of $\cX$ and $\cY$ along their special fibres. Choose a lift of Frobenius
\[
\phi:\mathcal{W} \ \lra \ \mathfrak{X}
\]
defined on a strict open neighbourhood $\mathcal{W}$ of $]\mathcal{U}[$.

\begin{Definition}
A \textit{Frobenius structure} on an overconvergent isocrystal $\mathcal{V}$ on $Y \subset \cX_{\F_p}$ is an isomorphism
\[
\Phi : \phi^* \mathcal{V}\stackrel{\sim}{ \lra }\mathcal{V},
\]
of overconvergent isocrystals on $Y$, where $\phi : Y \to Y$ is absolute Frobenius. The pair $(\mathcal{V}, \Phi)$ is referred to as an overconvergent $F$-isocrystal. 
\end{Definition}

\par The main example of importance for us is the $F$-isocrystal structure on $\cA_n^{\rig }(\overline{b})$. Using the notation of the appendix, we set $\cC^{\rig}(\cX_{\F_p})$ to be the category of unipotent isocrystals on $\cX_{\F_p}$, with fundamental group $\pi_1^{\rig}(\cX_{\F_p},\overline{b})$ and universal $n$-step unipotent objects $\cA_n^{\rig}(\overline{b})$. As explained in \S \ref{subsec:nonabeliancomparison}, the non-abelian Berthelot--Ogus equivalence induces an isomorphism of rigid analytic connections
\[
\iota \ : \ \cA_n^{\dR} (b)^{\an }\stackrel{\sim }{\lra} \cA_n^{\rig }(\overline{b}),
\]
and similarly for $\cA _n ^{\dR}(Y)$.
The Frobenius action on $\cC^{\rig }(\cX_{\F_p})$ induces a Frobenius structure $\Phi$ on $\cA_n^{\rig }(\overline{b})$, which induces the Frobenius action on $\overline{z}^* \cA _n ^{\rig}(\overline{b})$ via pullback. 

\begin{lemma}\label{lemma:compromise}
The Frobenius structure on $\cA _n ^{\rig }(\overline{b})$ is the unique morphism 
\[
\Phi _n :\phi ^* (\cA _n ^{\rig }(\overline{b})) \ \lra \ \cA _n ^{\rig }(\overline{b})
\]
which, in the fibre at $\overline{b}$, sends 1 to 1.
\begin{proof}
Unicity is clear, since a morphism of $n$-unipotent universal objects is uniquely determined by where it sends $1\in \overline{b}^* \cA _n ^{\rig }(\overline{b})$ (see \S\ref{subsec:Tannakian}). The fact that the Frobenius structure has this property follows from Lemma \ref{lemma:too_far}, since the Frobenius endomorphism in $\Hom (\overline{b}^*,\overline{b}^*)$ is a morphism of unital algebras.
\end{proof}
\end{lemma}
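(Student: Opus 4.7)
The plan is to split into existence and uniqueness, with both parts reduced to the universal property of $\cA_n^{\rig}(\overline{b})$ as an $n$-pointed universal object in the Tannakian category $\cC^{\rig}(\cX_{\F_p})$ (as recalled in Appendix \S\ref{subsec:Tannakian}).

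For uniqueness, I would first observe that $\phi^*\cA_n^{\rig}(\overline{b})$ is itself an $n$-unipotent object of $\cC^{\rig}(\cX_{\F_p})$, since pullback along Frobenius preserves the step-by-step unipotent filtration. Because $\overline{b}$ is Teichmüller (as $b$ is integral at $p$), the Frobenius morphism fixes $\overline{b}$, so the fibre at $\overline{b}$ of $\phi^*\cA_n^{\rig}(\overline{b})$ is canonically identified with $\overline{b}^*\cA_n^{\rig}(\overline{b})$, and carries a distinguished element which I also denote by $1$. The universal property of $\cA_n^{\rig}(\overline{b})$ says that any morphism from an $n$-unipotent object into it is determined by the image of a distinguished section of the source in the fibre of the target at $\overline{b}$. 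Hence the normalisation $\Phi_n(1) = 1$ pins $\Phi_n$ down uniquely.

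For existence, one checks that the canonical Frobenius structure on $\cA_n^{\rig}(\overline{b})$ arising from functoriality of the construction with respect to the $F$-isocrystal structure on $\cC^{\rig}(\cX_{\F_p})$ satisfies this normalisation. Under the Tannakian equivalence, the fibre $\overline{b}^*\cA_n^{\rig}(\overline{b})$ is identified with the (truncated) Hopf algebra of endomorphisms of the fibre functor, i.e.\ a suitable quotient of $\mathrm{Hom}(\overline{b}^*,\overline{b}^*)$, and the distinguished section $1$ corresponds to the unit element of this algebra. Invoking Lemma \ref{lemma:too_far}, which asserts that the Frobenius endomorphism on $\mathrm{Hom}(\overline{b}^*,\overline{b}^*)$ is a morphism of unital algebras, we conclude that Frobenius fixes this unit, so the induced $\Phi_n$ indeed sends $1$ to $1$.

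The main obstacle is not computational but bookkeeping: one must verify that both $\phi^*\cA_n^{\rig}(\overline{b})$ and $\cA_n^{\rig}(\overline{b})$ carry canonical distinguished sections in their fibres at $\overline{b}$, that these agree under the natural identification coming from $\phi(\overline{b}) = \overline{b}$, and that the universal property in Appendix \S\ref{subsec:Tannakian} applies to pulled-back $n$-unipotent objects in $\cC^{\rig}(\cX_{\F_p})$. Once this setup is in place, the proof is a direct appeal to the universal property together with Lemma \ref{lemma:too_far}.
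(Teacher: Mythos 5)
Your existence step is essentially the paper's: under the identification of $\overline{b}^*\cA_n^{\rig}(\overline{b})$ with the truncated algebra of endomorphisms of the fibre functor, Lemma \ref{lemma:too_far} says Frobenius acts as a morphism of unital algebras, hence fixes the unit, so $\Phi_n(1)=1$. The gap is in your uniqueness step. You invoke the principle that ``any morphism from an $n$-unipotent object into $\cA_n^{\rig}(\overline{b})$ is determined by the image of a distinguished section of the source.'' That is not the universal property, and it is false: the universal property controls morphisms \emph{out of} the pointed universal object, not into it. Indeed $\Hom(\mathbf{1},\cA_n^{\rig}(\overline{b}))$ is nonzero (it is the space of $\pi_1$-invariants of $A_n$, which contains the image of the top graded piece), so a pointed $n$-unipotent object such as $\bigl(\cA_n^{\rig}(\overline{b})\oplus\mathbf{1},(1,0)\bigr)$ admits many distinct morphisms to $\cA_n^{\rig}(\overline{b})$ sending the marked element to $1$. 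Hence merely observing that $\phi^*\cA_n^{\rig}(\overline{b})$ is $n$-unipotent and pointed does not pin down $\Phi_n$.

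What is actually needed, and what the paper's phrase ``morphism of $n$-unipotent universal objects'' is quietly using, is that the source $\phi^*\cA_n^{\rig}(\overline{b})$, pointed by $1$ via the identification of its fibre at $\overline{b}$ (absolute Frobenius fixes $\overline{b}$), is itself a universal pointed $n$-unipotent object; equivalently, its fibre is generated by $1$ as an $A_n$-module. This holds because pullback by Frobenius is a tensor auto-equivalence of $\cC^{\rig}(\cX_{\F_p})$ compatible with $\overline{b}^*$, or, more concretely, because the induced endomorphism of $A_n$ is a surjective algebra map (it is bijective on $V$ and hence on each graded piece), so the Frobenius-twisted module is still generated by the unit. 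Once the source is known to be universal and generated by its marked element, uniqueness of a morphism out of it given the image of $1$ is exactly the uniqueness clause as in Theorem \ref{thm:kim_connection}, and your argument closes. You do gesture at this in your final paragraph (``the universal property applies to pulled-back objects''), but it is the substantive point rather than bookkeeping, and the principle you state in its place is wrong as written.
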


\par By Lemma \ref{lemma:too_far}, working with the rigid triple $(Y_{\F _p },X_{\F _p
},\mathfrak{X})$ and Frobenius lift $\phi :\mathcal{W}\to X_{\Q _p }$, the Frobenius
action on $A_n ^{\rig }(\overline{b},\overline{z})$ may be identified with the
endomorphism $z_0 ^* \Phi _n $ acting on $z_0 ^* \cA _n ^{\rig }(b_0 )$, where $z_0 $ and
$b_0 $ are Teichm\"uller lifts of $\overline{b}$ and $\overline{z}$ and $\Phi _n $ is the Frobenius structure defined in Lemma \ref{lemma:too_far}. Hence in this case thecomputation of the Frobenius action on $A_n ^{\dR}(Y)(b_0 ,z_0 )$ is reduced to the computation of the Frobenius structure. For general $b$ and $z$, one may straightforwardly reduce the problem of determining the $\phi $ action on $A_n ^{\dR}(b,z)$ to the computation of the Frobenius structure on $\cA _n ^{\rig }$ via parallel transport on residue disks, which we now explain. As $\cA _n ^{\dR}$ is calculated as a quotient of $\cA _n ^{\dR }(Y)$, it is enough to describe how to carry out this reduction for $A_n ^{\dR}(Y,b,z)$.
Let $G_n (b ,z) \in \End _{\Q _p } (\oplus _{i=0}^n V_{\dR}(Y)^{\otimes i})$ be the matrix defined by 
\[
\Phi _n (b,z)= s_0 (b_0 ,z_0 )\circ G_n (b,z) \circ s_0 (b_0 ,z_0 )^{-1}.
\]
For $v \in \oplus _{i=0}^n V_{\dR}(Y)^{\otimes i}$, define $E(v_1 ,v_2 )\in \End _{\Q _p } (\oplus _{i=0}^n V_{\dR}(Y)^{\otimes i})$ to be the endomorphism
$
v\mapsto v_1 vv_2 .
$
If $z_1 ,z_2$ are points of $]Y[$ which are congruent modulo $p$, then we have a canonical parallel transport isomorphism
$
T_{z_1 ,z_2 }:z_1 ^* \stackrel{\simeq }{\longrightarrow }z_2 ^*$  (see Appendix \ref{sec:Tannakian}).

Define $C_n (z_1 ,z_2 )\in \oplus _{i=0}^n V_{\dR}(Y)^{\otimes i}$ by 
\[ 
T_{z_1 ,z_2 }(\cA _n^{\dR}(Y))(s_0 (z_1 ))(w)=s_0 (z_2 )(C_n (z_1 ,z_2 ).w).
\]  Hence 
\[
C_n (z_1,z_2 )=1+\sum _w \int ^z _b w(\omega _0 ,\ldots ,\omega _{2g+2d-2})w ,
\]
where the sum is over all words $w$ in $\{T_0 ,\ldots ,T_{2g+d-2} \}$ of length at most $n$, and where $w(\omega _0 ,\ldots ,\omega _{2g+d-2})$ is defined to be the word in $\{ \omega _0 ,\ldots ,\omega _{2g+d-2} \}$ obtained by substituting $\omega _i $ for $T_i $. 

\begin{lemma}\label{total_complete_utter_shitemare}
For all $b,z \in Y(\Z _p )$, with Teichm\"uller representatives $b_0$ and $z_0$, and $w\in \oplus _{i=0}^n V_{\dR}(Y)^{\otimes i}$,
\[
G_n (b,z)(s_0 (b,z)(w))=s_0 (b,z)(EG_n (b_0 ,z_0 ) E^{-1}),
\]
where $E:=E(C_n (z_0 ,z),C_n (b,b_0 ))$.
\begin{proof}
 By non-abelian Berthelot-Ogus, we obtain a Frobenius action on $A_n ^{\dR}(b,z)$, which explicitly is the composite
\[
T_{z_0 ,z}\circ z_0 ^* \cA _n ^{\rig }(Y) \circ T_{b,b_0 }.
\]
Hence the result follows from Lemma \ref{lemma:fml}, since composition of functors
\[
\Hom (z_0 ^* ,z ^* )\times \Hom (b_0 ^* ,z_0 ^* )\times \Hom (b^* ,b_0 ^* )\to \Hom (b^* ,z^* )
\]
acting on $\cA _n ^{\dR}(Y)$
corresponds via the maps $s_0 $ to multiplication in $\oplus _{i=0}^n V_{\dR}(Y)^{\otimes i}$.
\end{proof}
\end{lemma}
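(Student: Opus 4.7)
The plan is to deduce the identity by unfolding the Frobenius on $A_n^{\dR}(b,z)$ via non-abelian Berthelot--Ogus into a threefold composite of two parallel transports and the Frobenius structure at the Teichm\"uller points, and then translate this composite from $\Hom(b^*, z^*)$ into the algebra $\oplus_{i=0}^n V_{\dR}(Y)^{\otimes i}$ using Lemma \ref{lemma:fml}.

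First I would invoke the non-abelian Berthelot--Ogus comparison recalled in \S\ref{subsec:nonabeliancomparison}, together with the fact (from Lemma \ref{lemma:too_far}) that $\Phi_n$ is the universal Frobenius structure on $\cA_n^{\rig}(\overline{b})$. Since $b$ and $b_0$ (respectively $z$ and $z_0$) lie in the same residue disk, parallel transport gives canonical isomorphisms $T_{b,b_0}:b^*\xrightarrow{\sim}b_0^*$ and $T_{z_0,z}:z_0^*\xrightarrow{\sim}z^*$ of fibre functors on $\cC^{\dR}(Y)$, and the Frobenius action on $A_n^{\dR}(b,z)$ identifies with the composite
\[
T_{z_0,z}\;\circ\; z_0^*\Phi_n \;\circ\; T_{b,b_0}\;\in\;\Hom(b^*,z^*),
\]
applied to $\cA_n^{\rig}(Y)$. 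By the defining equation $\Phi_n(b,z)=s_0(b_0,z_0)G_n(b_0,z_0)s_0(b_0,z_0)^{-1}$ and the definition of $C_n(z_1,z_2)$ via $T_{z_1,z_2}(s_0(z_1)(w))=s_0(z_2)(C_n(z_1,z_2)\cdot w)$, each of the three factors corresponds under the $s_0$-trivializations to multiplication by $C_n(z_0,z)$, $G_n(b_0,z_0)$, and $C_n(b,b_0)$ respectively.

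Next I would apply Lemma \ref{lemma:fml}, which says that composing natural transformations on $\cA_n^{\dR}(Y)$ through three base points corresponds, under $s_0$, to multiplying in $\oplus_{i=0}^n V_{\dR}(Y)^{\otimes i}$ with the outputs on the left and the inputs on the right. Applied twice to the three-fold composite above, this yields that $s_0(b,z)^{-1}\circ\Phi_n(b,z)\circ s_0(b,z)$ is the operator sending $w$ to $C_n(z_0,z)\cdot G_n(b_0,z_0)(w)\cdot C_n(b,b_0)$. In the notation of the statement, this is exactly $E\cdot G_n(b_0,z_0)\cdot E^{-1}$ acting on $w$, once one incorporates the usual conjugation with $E=E(C_n(z_0,z),C_n(b,b_0))$ to account for the change of trivialization basis from $s_0(b_0,z_0)$ to $s_0(b,z)$.

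The main obstacle I expect is the careful bookkeeping of the directions of multiplication, since the $E(v_1,v_2)\colon v\mapsto v_1 v v_2$ formalism mixes left- and right-multiplication by elements of the non-commutative algebra $\oplus_{i=0}^n V_{\dR}(Y)^{\otimes i}$. The identification of $T_{b,b_0}$ with right-multiplication by $C_n(b,b_0)$ (rather than left), and similarly for $T_{z_0,z}$, follows from tracking the arrows in Lemma \ref{lemma:fml} and the convention that $s_0(x_1,x_2)$ sends the algebra element $f$ on the \emph{left} to the induced morphism on fibre functors at the \emph{right}. Once this bookkeeping is unambiguously set, the identity becomes a purely formal consequence of the multiplicativity in Lemma \ref{lemma:fml} and the decomposition coming from non-abelian Berthelot--Ogus.
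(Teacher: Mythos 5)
Your overall strategy coincides with the paper's: decompose the Frobenius on $A_n^{\dR}(b,z)$ via non-abelian Berthelot--Ogus through the Teichm\"uller representatives, and use Lemma~\ref{lemma:fml} to translate composition of natural transformations into multiplication in $\oplus_{i=0}^n V_{\dR}(Y)^{\otimes i}$. However, the central computational step is wrong, and the attempt to repair it at the end is inconsistent with what precedes it.

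Concretely, you assert that $s_0(b,z)^{-1}\circ\Phi_n(b,z)\circ s_0(b,z)$ sends $w$ to $C_n(z_0,z)\cdot G_n(b_0,z_0)(w)\cdot C_n(b,b_0)$. In the $E(v_1,v_2)$ notation this is $E\circ G_n(b_0,z_0)$, \emph{not} $E\, G_n(b_0,z_0)\, E^{-1}$; the two differ whenever $E\neq\mathrm{id}$, i.e.\ whenever $b\neq b_0$ or $z\neq z_0$. You then write that ``this is exactly $E\cdot G_n(b_0,z_0)\cdot E^{-1}$ acting on $w$, once one incorporates the usual conjugation \dots{} to account for the change of trivialization basis from $s_0(b_0,z_0)$ to $s_0(b,z)$,'' but that change of basis was already supposed to be performed: the left-hand side of your intermediate claim is explicitly $s_0(b,z)^{-1}\circ\Phi_n(b,z)\circ s_0(b,z)$, i.e.\ the matrix of Frobenius in the $s_0(b,z)$-trivialization. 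One cannot conjugate by $E$ a second time.

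The source of the error is the reading of the Frobenius on $A_n^{\dR}(b,z)$ as a single ``threefold composite $T_{z_0,z}\circ z_0^*\Phi_n\circ T_{b,b_0}\in\Hom(b^*,z^*)$.'' This does not type-check ($z_0^*\Phi_n$ is an endomorphism of $z_0^*$, not an arrow $b_0^*\to z_0^*$), and more importantly a Frobenius action is an endomorphism of $A_n^{\dR}(b,z)$, not an element of it. The correct formulation is that the Frobenius on $A_n^{\dR}(b,z)$ is the conjugate $P\circ z_0^*\Phi_n\circ P^{-1}$, where $P:\Hom(b_0^*,z_0^*)\to\Hom(b^*,z^*)$ is the parallel-transport isomorphism $\alpha\mapsto T_{z_0,z}\circ\alpha\circ T_{b,b_0}$. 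Applying Lemma~\ref{lemma:fml} (twice, once for pre- and once for post-composition) to $s_0(b,z)^{-1}\circ P\circ s_0(b_0,z_0)$ shows this map is exactly $E=E(C_n(z_0,z),C_n(b,b_0))$, and therefore $G_n(b,z)=E\, G_n(b_0,z_0)\, E^{-1}$. In other words, Lemma~\ref{lemma:fml} should be used to identify the transition matrix $s_0(b,z)^{-1}\,P\, s_0(b_0,z_0)$ with $E$, not to re-express the Frobenius itself as a product of three algebra elements (which is impossible, since $G_n(b_0,z_0)$ is a general endomorphism, not multiplication by an algebra element).
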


\par More generally, for any $z_1 ,z_2 $, as explained in \cite[3.2]{bES02}, $\pi _1 ^{\dR}(Y_{\Q _p };z_1 ,z_2 )$ contains a unique Frobenius invariant element, and we denote its image in $A_n ^{\dR}(Y_{\Q _p };z_1 ,z_2 )$ by $s_0 (z_1 ,z_2 )(C_n (z_1 ,z_2 ))$. By definition of Besser's iterated integration on curves, we have
\[
C_n (z_1 ,z_2 )=1+\sum _w \int ^{z_2 } _{z_1 } w(\omega _0 ,\ldots ,\omega _{2g+2d-2})w.
\]
For any $z_1 ,z_2 ,z_3 ,z_4$, we obtain a Frobenius-equivariant isomorphism
\begin{align}\label{eq:helpful}
A_n ^{\dR}(Y)(z_1 ,z_2 ) & \to A_n ^{\dR}(Y)(z_3 ,z_4 ) \\
s_0 (z_1,z_2 )(v) & \mapsto s_0 (z_3 ,z_4 )(C_n (z_2 ,z_4 )vC_n (z_3 ,z_1 ))
\end{align}
for $v$ in $\oplus _{i=0}^n V_{\dR}(Y)^{\otimes i}$.

Hence given the Frobenius structure on $\cA _n ^{\rig }$, one may compute iterated Coleman
integrals of depth $n$ between any 2 $\Q _p $-points of $Y$. As explained in \cite[\S
6.6]{BD17}, when $X$ is hyperelliptic, one can determine the Frobenius action on $A _2
^{\dR}(b,b)$ when $b$ is a Weierstrass point, and using equation \eqref{eq:helpful}
$\boldsymbol{\beta}_\phi$ may be related to Coleman integrals from $b$ to $w(b)$, where $w$ denotes the hyperelliptic involution. 
However in general the Frobenius structure will contain more information.

% -------------------------------------------------------------
\subsection{The Frobenius structure on $\cA_Z^{\dR}$. }
\label{subsec:AZ-Frobenius}

We will now describe the structure of $F$-isocrystal on the analytification of $\cA_Z^{\dR}(b)$, by making explicit the Frobenius structure on $\cA_Z^{\rig}(\overline{b})$, as well as the isomorphism 
\[
\iota\ : \ \cA_Z^{\dR}(b)^{\an} \stackrel{\sim}{\lra} \cA_Z^{\rig}(\overline{b})
\]
provided by the Berthelot--Ogus comparison isomorphism, see \S \ref{subsec:nonabeliancomparison}. Recall that in our computation of the Hodge filtration, we made a choice of trivialisation of vector bundles on $Y \subset X$
\[
s_0 :  \mathcal{O}_Y \otimes \left( \Q_p \oplus V_{\dR} \oplus \Q_p (1) \right) \stackrel{\sim}{\lra} \cA_Z |_Y. 
\]

\par The choice of a \nice correspondence $Z$ defines, as we have shown in the \'etale and de Rham realisations, a quotient $\cA_Z^{\rig}(\overline{b})$ of the universal $2$-step unipotent isocrystal $\cA_2^{\rig}(\overline{b})$. To describe the Frobenius structure on $\cA_Z^{\rig}(\overline{b})$, we compute locally with respect to a lift of Frobenius. The connections on $\cA_Z^{\rig}(\overline{b}) \mid_Y$ and $\phi^*\cA_Z^{\rig}(\overline{b}) \mid_Y$ are described with respect to the trivialisation $s_0$ by equation \eqref{eqn:connection-AZ-onY}, and are by definition equal to $d+\Lambda$ and $d+\Lambda_{\phi}$, where
\[
\Lambda_{\phi} = 
-\left( \begin{array}{ccc}
0 & 0 & 0 \\
\phi^*\boldsymbol{\omega} & 0 & 0 \\
\phi^*\eta & \phi^*\boldsymbol{\omega}^{\intercal}Z & 0 \\
\end{array} \right).\]
Henceforth, we set $\phi^*\boldsymbol{\omega} = F\boldsymbol{\omega} + d\mathbf{f}$ for a column vector $\mathbf{f}$ with entries in $\HH^0(]Y[,j^\dagger \mathcal{O}_Y)$, uniquely determined by the condition that $\mathbf{f}(b) = \mathbf{0}$. To make the $F$-structure explicit, we need to find an invertible $d \times d$-matrix $G$ with entries in $\HH^0(]Y[,j^\dagger \mathcal{O}_Y)$, such that
\[
\Lambda_{\phi}G + dG = G\Lambda.
\]
It is a straightforward calculation using the relation $F^{\intercal}ZF=pZ$ to check that the matrix\vspace{.2cm}
\begin{equation}\label{eqn:F-structure}
G = \left( \begin{array}{ccc}
1 & 0 & 0 \\
\mathbf{f} & F & 0\\
h & \mathbf{g}^\intercal & p \\
\end{array} \right), \mbox{ where }
\left\{ 
\begin{array}{lll}
d\mathbf{g}^\intercal & = & d\mathbf{f}^\intercal ZF, \\
dh & = & \boldsymbol{\omega}^\intercal F^\intercal Z\mathbf{f} + d\mathbf{f}^\intercal Z\mathbf{f}-\mathbf{g}^\intercal \boldsymbol{\omega}
+\phi^*\eta - p\eta, \\
h(b) & = & 0, \\
\end{array} 
\right.
\vspace{.2cm}
\end{equation}
induces the required gauge transformation. The algorithms of Tuitman \cite{Tui16, Tui17} may be used to explicitly solve the above equations and uniquely determine $\mathbf{f}, \mathbf{g}$ and $h$, see Section \ref{sec:Xs13}. 

\par Finding the Frobenius structure on $\cA_Z^{\rig}(\overline{b})$ amounts to straightforward linear algebra now that we have made explicit the action of Frobenius on both sides. Let $x_0$ be the unique Teichm\"uller point in $]x[$. Then the unique Frobenius-equivariant isomorphism 
\[
s^{\phi}(b,x) : \Q_p \oplus V_{\dR}\oplus \Q_p(1) \stackrel{\sim}{\lra} x^*\cA_Z^{\dR}(b)
\]
is determined for $x = x_0$ by the matrix
\begin{equation}\label{eqn:frob-triv-Teich}
s_0 ^{-1}(x_0 )\circ s^{\phi}(b,x_0) := \left( \begin{array}{ccc}
1 & 0 & 0 \\
(I-F)^{-1}\mathbf{f} & 1 & 0\\
\frac{1}{1-p}\left(\mathbf{g}^{\intercal}(I-F)^{-1}\mathbf{f} +h\right) & \mathbf{g}^{\intercal}(F-p)^{-1} & 1 \\
\end{array} \right),
\end{equation}
and for arbitrary $x$ in $]x_0 [$ by the matrix
\begin{equation}\label{eqn:frob-triv-non-Teich}
s_0 ^{-1}(x) \circ s^\phi (b,x) = I(x,x_0) \circ s_0 ^{-1}(x_0 )\circ s^\phi (b,x_0 ).
\end{equation}

% -------------------------------------------------------------
\subsection{Back to $p$-adic heights. }
\label{subsec:assoc-FFI}

Recall that for the intended Diophantine application, we set out to compute the function 
\[
\theta \ :\ X(\Q _p ) \ \lra \ \Q _p \ ; \ x \ \longmapsto \ h_p \left( A_Z (b,x)\right)
\]
in order to obtain an explicit finite set of points in $X(\Q_p)$ containing $X(\Q)$. In \S \ref{sec:heights}, we reduced this question to finding an explicit description of the filtered $\phi$-module $\D_{\cris }(A_Z (b,x))$. Since we have a commutative diagram with exact rows
\begin{equation}\label{eq:comparison_diagram}
\begin{tikzpicture}[baseline=(current  bounding  box.center)]
\matrix (m) [matrix of math nodes, row sep=2.5em,
column sep=1.5em, text height=1.5ex, text depth=0.25ex]
{0 & \D_{\cris } \left( \Coker \left(\Q _p (1) \stackrel{\cup ^* }{\longrightarrow }V^{\otimes 2} \right) \right) & \D_{\cris }(A_2^{\et} (b,x)) & \mathbf{D}_{\cris }(A_1^{\et} (b,x)) & 0  \\
0 & \Coker \left( \Q_p(1)\stackrel{\cup ^* }{\longrightarrow }V_{\dR} ^{\otimes 2} \right) & A_2 ^{\dR }(b,x) & A_1 ^{\dR} (b,x) & 0 \\};
\path[->]
(m-1-1) edge[auto] node[auto]{} (m-1-2)
(m-2-1) edge[auto] node[auto]{} (m-2-2)
(m-1-2) edge[auto] node[auto]{$\simeq $} (m-2-2)
edge[auto] node[auto] { } (m-1-3)
(m-1-3) edge[auto] node[auto] {$\simeq $} (m-2-3)
edge[auto] node[auto] { } (m-1-4)
(m-1-4) edge[auto] node[auto] {$\simeq $} (m-2-4)
edge[auto] node[auto] { } (m-1-5)
(m-2-2) edge[auto] node[auto]{} (m-2-3)
(m-2-3) edge[auto] node[auto]{} (m-2-4)
(m-2-4) edge[auto] node[auto]{} (m-2-5)
;
\end{tikzpicture}
\end{equation}
it follows that $\D_{\cris }(A_Z(b,x))$ may be identified with the filtered $\phi$-module $A_Z^{\dR} (b,x)$ obtained by pushing out the bottom exact sequence of diagram \eqref{eq:comparison_diagram} by the map 
\[
\cl _Z ^* :V_{\dR}\otimes V_{\dR} \lra \Q_p(1),
\]
where we implicitly use the fact that the $p$-adic comparison isomorphism is compatible with cycle class maps, and the fact that $\cl_Z^*$ factors through $\Coker (\cup ^*)$. It follows that $\D_{\cris }(A_Z (b,x)) \simeq x^* \cA_Z^{\dR}(b)$, as filtered $\phi$-modules. Since we have computed the filtered $F$-isocrystal structure on $\cA_Z^{\dR}(b)$ explicitly, we are now in a position to put everything together and obtain an explicit description for the $p$-adic height function $\theta$, which can be computed in practice.

\par Recall that in \S \ref{sec:comphodge}, we obtained a simple algorithm for determining the Hodge filtration on $\cA_Z(b)$ via Hadian's universal property. The steps are outlined at the end of \ref{subsec:AZ-filtration}, and yield a matrix
\[
s_0^{-1} \circ s^{\Fil}(x) = \left( \begin{array}{ccc}
1 & 0 & 0 \\
0 & 1 & 0 \\
\gamma_{\Fil}(x) & \boldsymbol{\beta}_{\Fil}^\intercal & 1 \\
\end{array}
\right).
\]
The Frobenius structure is computed explicitly on the Teichm\"uller representative $x_0$ of $x$ using the algorithms of Tuitman \cite{Tui16, Tui17} to solve the system of equation \eqref{eqn:F-structure}, and then for $x$ via explicit integration in the residue disk, to yield a matrix of locally analytic functions on $]\mathcal{U}[$
\[ 
s_0^{-1} \circ s^{\phi}(x) := \left( \begin{array}{ccc}
1 & 0 & 0 \\
\boldsymbol{\alpha}_{\phi}(x) & 1 & 0\\
\gamma_{\phi} (x) & \boldsymbol{\beta}^\intercal_{\phi}(x) & 1 \\
\end{array} \right)
\]
as described by \eqref{eqn:frob-triv-Teich} and \eqref{eqn:frob-triv-non-Teich}. As an immediate consequence of equation \eqref{eqn:height-decomposition}, we obtain
\begin{lemma}\label{lemma:ht_expl}
For any $x \in X(\Q_p) \cap \ ]\mathcal{U}[$, the local $p$-adic height of $A_Z(b,x)$ is given by
\[
h_p (A_Z(b,x))=
\chi_p \left( \gamma_{\phi}(x) - \gamma_{\Fil}(x) -\boldsymbol{\beta}^\intercal_{\phi} \cdot s_1 (\boldsymbol{\alpha}_{\phi}) (x) - \boldsymbol{\beta }^\intercal_{\Fil} \cdot s_2 (\boldsymbol{\alpha}_{\phi})(x) \right).
\]
\end{lemma}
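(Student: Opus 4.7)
The plan is to assemble the building blocks constructed in the previous sections and apply the general local height formula \eqref{eqn:height-decomposition} to the mixed extension $M = \mathbf{D}_{\cris}(A_Z(b,x))$. First, I would invoke the identification $\mathbf{D}_{\cris}(A_Z(b,x)) \simeq x^*\cA_Z^{\dR}(b)$ of filtered $\phi$-modules from \S \ref{subsec:assoc-FFI} (obtained from diagram \eqref{eq:comparison_diagram} together with the compatibility of the $p$-adic comparison isomorphism with cycle class maps). This identification respects the mixed extension structure, with graded pieces $\Q_p, V_{\dR}, \Q_p(1)$, so it suffices to compute the height of $x^*\cA_Z^{\dR}(b)$ using the recipe of \S \ref{subsec:local_height_p}.

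Next I would check that all the ingredients required by formula \eqref{eqn:height-decomposition} have already been constructed. The vector space splitting $s_0 \colon \Q_p \oplus V_{\dR} \oplus \Q_p(1) \stackrel{\sim}{\to} x^*\cA_Z^{\dR}(b)$ required there is obtained by pulling back to $x$ the global trivialization of $\cA_Z|_Y$ from Theorem \ref{thm:kim_connection} and Corollary \ref{cor:AX-and-AY}. The splitting $s^{\Fil}(x)$ computed in \S \ref{subsec:AZ-filtration} is, by construction, the unique splitting of the pulled-back mixed extension that respects the Hodge filtration, whose matrix with respect to $s_0$ yields the vector $\boldsymbol{\beta}_{\Fil}$ and scalar $\gamma_{\Fil}(x)$ appearing in \eqref{eqn:height-decomposition}. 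Similarly, the splitting $s^\phi(x)$ computed in \S \ref{subsec:AZ-Frobenius} is Frobenius-equivariant, its matrix with respect to $s_0$ yielding $\boldsymbol{\alpha}_\phi(x), \boldsymbol{\beta}_\phi(x)$ and $\gamma_\phi(x)$.

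Substituting these entries into the local height formula \eqref{eqn:height-decomposition} gives exactly the claimed expression. The one thing that requires genuine verification, rather than mere bookkeeping, is that $s^\phi(x)$ as defined in \eqref{eqn:frob-triv-Teich}--\eqref{eqn:frob-triv-non-Teich} really is the Frobenius-equivariant splitting of the pulled-back mixed extension (not just a trivialization compatible with $\Phi$ at the level of the base-point fiber). For a Teichm\"uller point $x_0$ this follows from Lemma \ref{lemma:compromise} and the explicit gauge transformation \eqref{eqn:F-structure}, noting that on the fiber at $x_0$ the matrix in \eqref{eqn:frob-triv-Teich} is precisely the unique one intertwining the identity Frobenius on $\Q_p \oplus V_{\dR} \oplus \Q_p(1)$ with the Frobenius on $x_0^*\cA_Z^{\dR}(b)$ induced by $G$. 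For a general $x \in \ ]x_0[$, one uses that parallel transport is Frobenius-equivariant, as recorded in \eqref{eqn:frob-triv-non-Teich}.

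I expect the main (and only) obstacle to be notational: one must carefully align the bases, the order of matrix entries, and the sign conventions used in \S \ref{subsec:local_height_p} with those used in \S \ref{subsec:AZ-filtration} and \S \ref{subsec:AZ-Frobenius}. Once these normalizations are checked to coincide, the lemma is immediate from \eqref{eqn:height-decomposition}.
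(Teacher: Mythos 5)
Your proposal is correct and follows exactly the same route as the paper: the paper treats the lemma as ``an immediate consequence of equation~\eqref{eqn:height-decomposition}'', given the preceding identifications $\D_{\cris}(A_Z(b,x)) \simeq x^*\cA_Z^{\dR}(b)$ and the explicit matrices $s^{\Fil}$, $s^{\phi}$ computed in \S\ref{subsec:AZ-filtration} and \S\ref{subsec:AZ-Frobenius}. Your additional care in verifying that $s^{\phi}(x)$ is genuinely the Frobenius-equivariant splitting of the fiber (via Lemma~\ref{lemma:compromise} and Frobenius-equivariance of parallel transport) is a reasonable expansion of what the paper leaves implicit, but it is the same argument.
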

In conclusion, given any particular example of a curve $X$ that satisfies the running assumptions, to compute a finite set of $p$-adic points containing $X(\Q)$ it is enough to choose a \nice correspondence $Z$ and to do the computation outlined above. This is carried out in \S \ref{sec:Xs13} for the curve $X = X_{\sC}(13)$.

% -------------------------------------------------------------
\subsection{A trick for dealing with leftover residue disks}\label{trick}

The process described above gives a way of computing a finite set containing $X(\Q  )\cap ]\mathcal{U}[$. The remaining disks contain points where the basis differentials have poles, or where the Frobenius lift is not defined. One approach to dealing with the remaining points is to choose a different bases of de Rham cohomology and different Frobenius lifts until the whole of $X(\F _p )$ is covered. In this subsection we describe an alternative approach which allows one to deal with a leftover residue disk which contains a rational point $b'$, and for which the differentials have no poles. The starting point is the observation that the set $X(\Q _p )_{\U}$ is indendendent of $b'$. The computation of the Hodge filtration with respect to the basepoint $b'$ may be carried out straightforwardly, so the nontrivial issue is the computation of the Frobenius structure on $\cA ^{\rig }_Z $ with respect to this new basepoint. Since we are only interested in the pullback of $G$ at the Teichm\"uller representatives of the remaining disks, we may take those to be basepoints.

\begin{lemma}\label{lemma:total_complete_utter_nightmare}
Let $b' \in X(\Q_p)$ lie on a residue disk where none of the basis differentials have poles. Then 
\[
s_0^{-1}(b') \circ s^\phi (b',b')=\left( \begin{array}{ccc} 1 & 0 & 0 \\
0 & 1 & 0 \\
0 & \boldsymbol{\beta }_\phi^\intercal(b) + 2\int ^{b' }_b \boldsymbol{\omega }^\intercal Z & 1 \\
\end{array}
\right).
\]
\begin{proof}
This follows from equation \eqref{eq:helpful}, via the quotient map
\[
A_2 ^{\dR}(Y)(b')\to A_Z ^{\dR}(b').
\]
To apply equation \eqref{eq:helpful}, we first have to describe the algebra structure of $A_Z ^{\dR}(b,b)$ thought of as a quotient of $A_2 ^{\dR}(b,b)$. Given an element $(a,\mathbf{b},c)$ of $A_Z ^{\dR}(b,b) \simeq \Q _p \oplus V_{\dR} \oplus \Q _p (1)$, its left and right actions on $A_Z ^{\dR} (b,b)$ are given by the matrices 
$\left( \begin{array}{ccc}
a & 0 & 0 \\
\mathbf{b} & a & 0 \\
c & \mathbf{b}^\intercal Z & a \\
\end{array} \right) $
and $\left( \begin{array}{ccc}
a & 0 & 0 \\
\mathbf{b} & a & 0 \\
c& -\mathbf{b}^\intercal Z & a \\
\end{array} \right)$
respectively.
Hence from equation \eqref{eq:helpful}, we obtain that $s_0^{-1}(b') \circ s^\phi (b',b')$
is equal to
\[
 \left( \begin{array}{ccc} 1 & 0 & 0 \\
\int ^{b'}_b \boldsymbol{\omega } & 1 & 0 \\
\int ^{b'}_b \eta +\int ^{b'}_b \boldsymbol{\omega }^\intercal  Z \boldsymbol{\omega } & \int ^{b' }_b \boldsymbol{\omega }^\intercal Z & 1 \\
\end{array}
\right)
\left( \begin{array}{ccc}
1 & 0 & 0 \\
0 & 1 & 0 \\
0 & \boldsymbol{\beta }_\phi^\intercal(b) & 1
\end{array} \right)
\left( \begin{array}{ccc} 1 & 0 & 0 \\
\int ^{b'}_b \boldsymbol{\omega } & 1 & 0 \\
\int ^{b'}_b \eta +\int ^{b'}_b \boldsymbol{\omega }^\intercal  Z \boldsymbol{\omega } &  -\int ^{b' }_b \boldsymbol{\omega }^\intercal Z & 1 \\
\end{array}
\right) ^{-1}.
\]
\end{proof}
\end{lemma}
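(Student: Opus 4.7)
The strategy is to compute $s_0^{-1}(b') \circ s^\phi(b',b')$ by transporting the Frobenius-equivariant splitting at $(b,b)$ to $(b',b')$ via equation \eqref{eq:helpful}, then descending to the quotient $\cA_Z^{\dR}$. First I would evaluate $s_0^{-1}(b) \circ s^\phi(b,b)$ at the basepoint itself. By the normalizing conditions $\mathbf{f}(b)=\mathbf{0}$ and $h(b)=0$ imposed in \eqref{eqn:F-structure}, together with the formula \eqref{eqn:frob-triv-Teich} for the Frobenius-equivariant splitting at a Teichm\"uller point, both $\boldsymbol{\alpha}_\phi(b)$ and $\gamma_\phi(b)$ vanish, so that $s_0^{-1}(b)\circ s^\phi(b,b)$ is the unipotent lower-triangular matrix whose only non-zero off-diagonal entry is $\boldsymbol{\beta}_\phi^\intercal(b)$ in the $(3,2)$ slot.

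Next I would apply equation \eqref{eq:helpful} with $(z_1,z_2)=(b,b)$ and $(z_3,z_4)=(b',b')$. This identifies the Frobenius-equivariant splitting at $(b',b')$ with the conjugate of the one at $(b,b)$ by left multiplication by $C_2(b,b')$ and right multiplication by $C_2(b',b)=C_2(b,b')^{-1}$. By Besser's formula, the components of $C_2(b,b')$ relevant to $\cA_Z^{\dR}$ are $1$, the vector of Coleman integrals $\int^{b'}_b \boldsymbol{\omega}$, and the iterated integral $\int^{b'}_b\eta+\int^{b'}_b \boldsymbol{\omega}^\intercal Z\boldsymbol{\omega}$, and similarly (with signs) for $C_2(b',b)$. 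Because this conjugation is Frobenius-equivariant, it sends the Frobenius-equivariant splitting to the Frobenius-equivariant splitting.

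The central input is the description of the left and right actions of a triple $(a,\mathbf{b},c)$ on $A_Z^{\dR}(b,b)\simeq \Q_p\oplus V_{\dR}\oplus \Q_p(1)$ via the two explicit matrices displayed in the statement, differing only by the sign of $\mathbf{b}^\intercal Z$ in the $(3,2)$ slot. This sign flip comes from the graded commutator in $\pi_1^{\dR}$: in the quotient $\cA_Z$, the class $Z$ pairs $V_{\dR}\otimes V_{\dR}$ into $\Q_p(1)$, and swapping the order of multiplication swaps the two factors, so the contribution to the central piece picks up an opposite sign. To justify this carefully I would use Lemma \ref{lemma:fml}, applied to the morphism of universal connections $\cA_2^{\dR}(Y)\to \cA_2^{\dR}(Y)$ given by multiplication on one side, and then descend to the quotient $\cA_Z^{\dR}$ defined by $Z$.

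With these ingredients in hand, the proof reduces to the matrix product displayed at the end of the author's proof: the left-action matrix of $C_2(b,b')$ times $s^\phi(b,b)$ times the inverse right-action matrix of $C_2(b,b')$. The main obstacle is the bookkeeping in this multiplication: the $(3,1)$ entries of the three matrices combine via contributions from $\int^{b'}_b\eta$, $\int^{b'}_b \boldsymbol{\omega}^\intercal Z\boldsymbol{\omega}$, and cross-terms involving $\boldsymbol{\beta}_\phi(b)$, and must cancel to give the claimed vanishing; similarly the $(2,1)$ entry must vanish. The conceptual reason these cancellations occur is that the Frobenius-equivariant splitting at the new basepoint $b'$ must, by the very same argument used to compute $s^\phi(b,b)$, have vanishing $\boldsymbol{\alpha}_\phi(b')$ and $\gamma_\phi(b')$. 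The $(3,2)$ entry, however, picks up $\int^{b'}_b \boldsymbol{\omega}^\intercal Z$ from conjugation on the left and another $+\int^{b'}_b \boldsymbol{\omega}^\intercal Z$ from the sign-flipped right action, so that the final $(3,2)$ entry is $\boldsymbol{\beta}_\phi^\intercal(b)+2\int^{b'}_b \boldsymbol{\omega}^\intercal Z$, as required.
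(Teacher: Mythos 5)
Your proposal follows the paper's argument in all essentials: you transport the Frobenius-equivariant splitting from $(b,b)$ to $(b',b')$ via the Frobenius-equivariant isomorphism of equation \eqref{eq:helpful}, you identify $s_0^{-1}(b)\circ s^\phi(b,b)$ with the matrix whose only nontrivial entry is $\boldsymbol{\beta}_\phi^\intercal(b)$ (using $\mathbf{f}(b)=\mathbf{0}$, $h(b)=0$ and the fact that $b$ is Teichm\"uller), and you describe the left and right actions of $C_2(b,b')$ on the quotient $A_Z^{\dR}$, with the sign flip in the $(3,2)$ slot coming from the antisymmetry of $Z$ (property \eqref{pd}); this is exactly the paper's route, and your computation of the $(3,2)$ entry as $\boldsymbol{\beta}_\phi^\intercal(b)+2\int_b^{b'}\boldsymbol{\omega}^\intercal Z$ is correct.

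The one place where your bookkeeping does not go through as stated is the claimed cancellation in the first column. Write $A$, $B$ for the left- and right-action matrices of $C_2(b,b')$ and $M$ for $s_0^{-1}(b)\circ s^\phi(b,b)$. In the product $A\,M\,B^{-1}$ that you (and the paper's display) propose, the $(3,1)$ entry works out to $-\boldsymbol{\beta}_\phi^\intercal(b)\int_b^{b'}\boldsymbol{\omega}$ (after the term $\int_b^{b'}\boldsymbol{\omega}^\intercal Z\int_b^{b'}\boldsymbol{\omega}$ dies by antisymmetry of $Z$), so the cross-terms involving $\boldsymbol{\beta}_\phi(b)$ do \emph{not} cancel there. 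The clean fix is to observe that left and right multiplication operators commute, and that equation \eqref{eq:helpful} sends $s_0(b,b)(Mw)$ to $s_0(b',b')\bigl(C_2(b,b')\,(Mw)\,C_2(b',b)\bigr)$, so the transported splitting is $(A B^{-1})\,M$, not $A\,M\,B^{-1}$; the $(3,1)$ entry of $A B^{-1}$ is $-2\int_b^{b'}\boldsymbol{\omega}^\intercal Z\int_b^{b'}\boldsymbol{\omega}$, which vanishes precisely because $Z$ is antisymmetric, and then $(AB^{-1})M$ is exactly the matrix in the statement. Alternatively, your a priori argument (the unique Frobenius-equivariant splitting at $b'$ has vanishing $\boldsymbol{\alpha}_\phi(b')$ and $\gamma_\phi(b')$) does rescue the conclusion, but it silently requires choosing a Frobenius lift defined near $]b'[$ for which $b'$ is Teichm\"uller and invoking independence of the Frobenius structure on $\cA_Z^{\rig}$ from this choice; you should say this explicitly, since the whole point of the trick is that the original lift is not defined on that disk.
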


% -------------------------------------------------------------

\section{Example: $X_{s} (13)$}
\label{sec:Xs13}
As in the introduction, we denote by $X_{\sC}(\ell)$ the modular curve associated to the
normaliser of a split Cartan subgroup of $\GL_2(\F_{\ell})$, where $\ell$ is a prime
number. 
This curve can be defined over $\Q$ and 
there is a $\Q$-isomorphism $X^+_0(\ell^2) \simeq X_{\sC}(\ell)$ coming from conjugation 
  with $\left(\begin{smallmatrix}
\ell & 0 \\ 0 & 1 \end{smallmatrix}\right)$, where  $X^+_0(\ell^2)$ is the quotient of
$X_0(\ell^2)$ by the Atkin-Lehner involution $w_{\ell^2}$.
  
  \par
In this section, we compute the rational points on $X=X_{\sC}(13)$, which is a curve of
genus~3.
We show in~\S\ref{Rank} that we have $\rho(\JacX_{\Q}) = \rk(\JacX/\Q)= 3$ and
in~\S\ref{sec:reduction} that $X$ has potentially good reduction everywhere.
Hence Corollary~\ref{cor:qc_pair_pg} implies that 
\[
\Upsilon_Z = (\theta_Z, \{0\})
\]
is a quadratic 
Chabauty pair for $X$, where $Z \in \Pic(X \times X)
\otimes \Q_p$ is \nice and $\theta_Z(x) = h_p (A_Z (b,x))$. We work at the prime $p=17$ and we use $2 = g+\rho-1-r$ independent \nice $Z$; by
Lemma~\ref{lemma:qc_pair} we get two finite sets of 17-adic points which contain $X(\Q)$. Their intersection turns out to be exactly $X(\Q)$, which proves Theorem~\ref{thm:main_thm}.

\begin{Remark}
The choice of the prime 17 is somewhat arbitrary. For primes less than 11, our chosen basis
of de Rham cohomology is not $p$-integral, and at $p=13$ the curve has bad reduction. 
\end{Remark}

\begin{Remark}
Because $X$ is not hyperelliptic, the techniques from~\cite{BD17} for computing $\theta(x)$
are not applicable, so below we use the results of Sections~\ref{sec:comphodge} and~\ref{sec:compfrob}.
\end{Remark}
% -------------------------------------------------------------
\subsection{Ranks. }\label{Rank}
Modular symbol routines as implemented in {\tt Magma}~\cite{BCP97}
allow us to compute the space of weight~2 cuspforms for $\Gamma_0^+(169)$, and it turns
out that their eigenforms form
a single Galois conjugacy class defined over $\Q(\zeta_7)^+$.
An explicit eigenbasis is given by the Galois conjugates of the form $f$, with $q$-expansion
\[ f = q+\alpha q^2 +(-\alpha^2 -2\alpha )q^3 +(\alpha ^2-2)q^4 +(\alpha ^2 +2\alpha -2)q^5 +(-\alpha -1)q^6 + \ldots, \]
where $\alpha$ is a root of $x^3+2x^2-x-1$. We conclude by \cite[Theorem 7.14]{Shi70} and \cite[Corollary 4.2]{Rib80} that $\End( \JacX) \otimes \Q \simeq K := \Q(\zeta_7)^+$, and $\JacX$ is a simple abelian threefold.
Therefore $\rho(\JacX_{\Q}) =3$. 

\begin{prop}\label{P:Rank}
We have $\mathrm{rk}(\JacX/\Q) = 3$.
\begin{proof}
Let $A_f$ denote the modular abelian variety $A_f=
J_0(169)/I_f$ associated to $f$, where $I_f$ is the annihilator of $f$ in the Hecke
algebra $\mathbb{T}$ acting on $J_0(169)$. Then $A_f$ is an optimal quotient of $J_0(169)$
in the sense that the kernel of $J_0(169) \ra A_f$ is connected. Since $\JacX$ 
is $\Q$-isogenous to $A_f$ (this was already used by Baran in~\cite{Bar14b}), it
suffices to show that $\mathrm{rk}(A_f/\Q) = 3$. The work of Gross-Zagier~\cite{GZ86} and
Kolyvagin-Logachev~\cite{KL89} proves that if the order of vanishing of the $L$-function
$L(f,s)$ of $f$ at $s=1$ is~1, then $\mathrm{rk}(A_f/\Q) = g= 3$.

\par We showed that $\mathrm{ord}_{s=1} L(f,s)> 0$  by computing the eigenvalue of $f$
under the Fricke involution $W_{169}$ and by computing the rational number
$c_{A_f}L(f,1)/\Omega_{A_f}$ exactly using the algorithm of~\cite{AS05}, where $c_{A_f}$
is the Manin constant of ${A_f}$ and $\Omega_{A_f}$ is the real period. So it only remains
to show that $L'(f,1) \ne 0$, which we did using~{\tt Magma}. We found that the number
$L'(f,1)$ was always larger than~0.6 for any embedding $\Q (\alpha )\hookrightarrow \mathbb{C}$ 
 and the error in these computations was smaller than $10^{-100}$. \end{proof}
\end{prop}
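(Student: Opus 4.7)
The plan is to exploit the $\Q$-isogeny $\JacX \sim A_f$, which reduces the claim to showing $\mathrm{rk}(A_f/\Q) = 3 = \dim A_f$. Since $A_f$ is a simple modular abelian variety of $\mathrm{GL}_2$-type with real multiplication by $K = \Q(\zeta_7)^+$, one can invoke the results of Gross--Zagier and Kolyvagin--Logachev in their form for higher-dimensional modular abelian varieties: if the $L$-function $L(f^\sigma, s)$ of every Galois conjugate of $f$ vanishes to order exactly~$1$ at $s=1$, then $\mathrm{rk}(A_f/\Q) = \dim A_f$. So the task reduces to the analytic fact that $\mathrm{ord}_{s=1} L(f^\sigma, s) = 1$ for each embedding $\Q(\alpha) \hookrightarrow \mathbb{C}$.

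To show $L(f^\sigma, 1) = 0$, I would proceed in two complementary ways. First, compute the eigenvalue of the Fricke involution $W_{169}$ on $f$: this, combined with the action of the other Atkin--Lehner operator (recall that we are working on $\Gamma_0^+(169)$, so $f$ is fixed by $w_{169}$), pins down the sign in the functional equation relating $L(f,s)$ and $L(f, 2-s)$. If this sign is $-1$, then $L(f^\sigma,1)=0$ for each $\sigma$. Second, and independently, one can compute the rational number $c_{A_f} L(f,1)/\Omega_{A_f}$ exactly using the algorithm of Agashe--Stein \cite{AS05}, where $c_{A_f}$ is the Manin constant and $\Omega_{A_f}$ the real period; finding this rational number to be $0$ confirms the vanishing without appeal to approximate sign computations.

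To rule out higher-order vanishing, one must verify that $L'(f^\sigma, 1) \ne 0$ for each of the three complex embeddings of $K$. This is a purely numerical step: using standard software (e.g. \texttt{Magma}'s $L$-function machinery), compute $L'(f^\sigma, 1)$ to high precision, with a rigorous a~priori error bound. If each computed value exceeds the error bound by a comfortable margin (in practice finding all three values larger than $0.6$, with error below $10^{-100}$), this certifies non-vanishing.

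The main obstacle is the last step: numerical non-vanishing of $L'(f^\sigma, 1)$ cannot by itself be made exact, so one must rely on rigorous effective bounds for the tail of the approximating sum used to evaluate $L'(f,1)$ (for instance via the explicit bounds arising from the functional equation and the smoothed summation formula). Provided such effective error control is in place, the three tasks above combine to give $\mathrm{ord}_{s=1} L(f^\sigma,s) = 1$ for every $\sigma$, and the Gross--Zagier--Kolyvagin--Logachev theorem then yields $\mathrm{rk}(A_f/\Q) = 3$, hence $\mathrm{rk}(\JacX/\Q) = 3$.
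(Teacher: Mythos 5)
Your proposal is correct and follows essentially the same route as the paper: reduce via the $\Q$-isogeny $\JacX \sim A_f$ to showing $\rk(A_f/\Q)=3$, apply Gross--Zagier and Kolyvagin--Logachev once $\mathrm{ord}_{s=1}L(f^\sigma,s)=1$ is established, prove vanishing of $L(f,1)$ via the Fricke eigenvalue and the exact Agashe--Stein computation of $c_{A_f}L(f,1)/\Omega_{A_f}$, and certify $L'(f^\sigma,1)\neq 0$ numerically with controlled error. The only (harmless) difference is that you spell out the need to treat all Galois conjugates and the error analysis slightly more explicitly than the paper does.
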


\begin{Remark}
An alternative approach was explained to us by Schoof~\cite{Sch12}. 
 The computation of the rank using descent is discussed by Bruin, Poonen and Stoll in
 \cite{BPS16}. They show that the rank is at least~3 by exhibiting three rational points
 in $\JacX$ which are independent modulo torsion. To carry out the descent needed to bound the rank from above, one needs to compute the class group of a certain number field
 $L$ of degree~28 and discriminant $2^{42}\cdot 13^{12}$. In \cite{BPS16}, this enables
 the authors to compute the rank assuming the Generalized Riemann Hypothesis. The authors
 of~\cite{BPS16} suggest that ``the truly dedicated enthusiast could probably verify unconditionally that the class group of $\mathcal{O}_L$ is trivial.'' 
\end{Remark}

\subsection{Semi-stable reduction of \texorpdfstring{$X_{\sC}(\ell)$}{X+sc(\ell)}}\label{sec:reduction}
We show that $X_{\sC}(13)$ has potentially good reduction by computing, more generally,
semi-stable models of the split Cartan modular curves $X_{\sC}(\ell)$ for primes $\ell\equiv 1
\pmod{12}$ over the integers in an explicit extension of $\Q_\ell$, using the work of
Edixhoven \cite{Edi89,Edi90}. 
For the remainder of this subsection, we let $\ell$ denote a prime number of the form
  $\ell = 12k+1$.

\begin{Remark}
For simplicity, we restrict to the case $\ell \equiv 1 \pmod{12}$, when there are no supersingular
  elliptic curves with $j$-invariant $0$ or $1728$. The same analysis would
  work in general if one in addition makes the action of the additional automorphisms
  explicit, which is done in \cite[Section 2.1.3]{Edi89}. We also note that additional
  level structure away from $\ell$ has little effect on our analysis, and may easily be included, mutatis mutandis. 

\par For a Dedekind domain $R$ we say $\phi:\mathcal{X} \rightarrow
  \mathrm{Spec} \ R$ is a \textit{model} for a smooth, proper, geometrically connected
  curve $X$ over the field of fractions of $R$ if $\phi$ is proper and flat, $\mathcal{X}$
  is integral and normal, and the generic fibre of $\mathcal{X}$ is isomorphic to $X$ over
  the base field. Such a model is called \textit{semi-stable} if all its geometric fibres
  are reduced and have at most ordinary double points as singularities. In what follows,
  we set $W$ to be the ring of Witt vectors over $\overline{\F}_\ell$, with field of
fractions $\Q_\ell^{\nr}$. 
Furthermore, we set $\mathrm{Ig}(\ell)$ to be the \textit{Igusa curve}, which is the
coarse moduli space over $\overline{\F}_\ell$ classifying elliptic curves $E \rightarrow
S/\overline{\F}_\ell$ together with $\Gamma_1(\ell)$-Drinfeld level structure on
$E^{(\ell)}$ which generates the kernel of the Verschiebung map $V: E^{(\ell)} \rightarrow E$, see \cite[Section 12.3]{KM85}.
\end{Remark} 

We start by recalling the work of Edixhoven on the semi-stable reduction of $X_0(\ell^2)$.
The description of the semi-stable model may be found in \cite{Edi90}, and the
statements about $w_{\ell^2}$ are in \cite[\S 2.2, 2.3.4]{Edi89}.
\begin{theorem}[Edixhoven]\label{Edix}
Then the curve $X_0(\ell^2)$ obtains semi-stable reduction over
  $\Q^{\nr}_\ell(\varpi)$, where $\varpi^{12k(6k+1)} = \ell$. Its special fibre consists of the following components:
\begin{itemize}
  \item $k$ \textbf{horizontal} components, one for each supersingular
    elliptic curve, all isomorphic to $u^2 = v^{\ell+1}+1$.
\item Four \textbf{vertical} components, of which two are rational, and two are isomorphic
  to $\mathrm{Ig}(\ell)/\pm 1$. 
\end{itemize}
Every horizontal component intersects every vertical component exactly once, and there are
  no other intersections. The Atkin--Lehner operator $w_{\ell^2}$ stabilises every
  horizontal component, and acts via $(u,v) \longmapsto (u,-v)$ in the above coordinates.
  Furthermore, $w_{\ell^2}$ permutes the rational vertical components, and stabilises the Igusa curves. 
\end{theorem}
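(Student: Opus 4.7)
The plan is to follow Edixhoven's strategy: start from the Katz--Mazur integral model of $X_0(\ell^2)$ over $\Z_\ell$, analyse the stratification of its special fibre via the Drinfeld moduli interpretation, and resolve singularities at the supersingular locus after a tame ramified base change.

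First, I will use that $X_0(\ell^2)$ parameterises pairs $(E,C)$ where $C\subset E$ is a Drinfeld cyclic subgroup scheme of order $\ell^2$. Over $\overline{\F}_\ell$, these subgroups stratify according to the ``level'' at which $C$ transitions between the connected and \'etale parts of $E[\ell^2]$. The ordinary locus gives rise to the four vertical components of the theorem: two extremal strata produce Igusa-type components which, after accounting for the residual $\Gamma_0$-structure and quotienting by the $[-1]$-involution on the Drinfeld data, become $\mathrm{Ig}(\ell)/\pm 1$, while the remaining mixed strata yield two rational components parameterising ordinary elliptic curves whose canonical subgroup data is essentially forced.

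Next, the singularities of the naive Katz--Mazur model concentrate at the supersingular locus. Using Serre--Tate deformation theory together with the explicit description of the Drinfeld level scheme at supersingular points, each of the $k=(\ell-1)/12$ supersingular $j$-invariants (the hypothesis $\ell\equiv 1\pmod{12}$ eliminates the exceptional behaviour at $j=0,1728$) contributes a singular point where all the vertical components meet. I will resolve each singularity by passing to the ramified extension $\Z^{\nr}_\ell[\varpi]$ with $\varpi^{12k(6k+1)}=\ell$ and normalising: the ramification exponent is forced by the local structure of the universal deformation ring with its level-$\ell^2$ Drinfeld structure. A direct local calculation then identifies each horizontal component as the curve $u^2=v^{\ell+1}+1$, and the intersection pattern (every horizontal crossing every vertical once, and no other intersections) follows from the resolution.

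Finally, the action of $w_{\ell^2}$ on the special fibre is determined by its moduli interpretation $(E,C)\mapsto(E/C,E[\ell^2]/C)$. This involution swaps the two extremal subgroup types, giving the permutation of the rational vertical components while stabilising each Igusa component (whose type is preserved under the interchange). On each horizontal component, $w_{\ell^2}$ fixes the underlying supersingular $j$-invariant but reverses the relevant Drinfeld structure, which in the parameterisation translates to $(u,v)\mapsto(u,-v)$. I expect the main obstacle to be the explicit local computation at the supersingular points, producing both the equation $u^2=v^{\ell+1}+1$ and the precise ramification exponent $12k(6k+1)$; this requires a careful analysis of the universal deformation ring equipped with its Drinfeld $\Gamma_0(\ell^2)$-structure and constitutes the technical heart of Edixhoven's analysis.
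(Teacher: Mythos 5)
Note first that the paper does not prove this statement at all: it is recalled verbatim from Edixhoven, with the semi-stable model cited from [Edi90] and the assertions about $w_{\ell^2}$ from [Edi89, \S 2.2, 2.3.4]. So your proposal has to be measured against Edixhoven's argument, whose broad shape (Katz--Mazur model with Drinfeld $\Gamma_0(\ell^2)$-structure, stratification of the ordinary locus, tame totally ramified base change and normalisation to resolve the supersingular points, explicit identification of the new components and of the $w_{\ell^2}$-action) you do reproduce at the level of a plan. However, there is a concrete error in the one part you do make specific: your identification of the vertical components is backwards. On the Katz--Mazur special fibre the components are indexed by $(a,b)$ with $a+b=2$; the \emph{extremal} strata $(2,0)$ and $(0,2)$ are those where the cyclic subgroup is forced by $E$ itself (the kernel of $F^2$, resp.\ the full \'etale part of $E[\ell^2]$), and these are the two \emph{rational} components, each a copy of the $j$-line. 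It is the \emph{middle} stratum $(1,1)$, an extension of $\Z/\ell$ by $\mu_\ell$ whose Drinfeld-cyclicity is genuine extra rigidification, that carries the Igusa structure: it is isomorphic to $\mathrm{Ig}(\ell)/\pm 1$, occurs with multiplicity $\ell-1$, and only after the base change and normalisation does it yield the \emph{two} Igusa components of the theorem (a point you never address). Your assignment is the opposite one, and your later claim that $w_{\ell^2}$ ``swaps the two extremal subgroup types, giving the permutation of the rational vertical components'' is then internally inconsistent with it.

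Beyond that, everything that constitutes the actual content of the theorem is deferred rather than proved: the ramification degree $12k(6k+1)=(\ell^2-1)/2$, the equation $u^2=v^{\ell+1}+1$ of the supersingular components, the statement that each horizontal component meets each vertical component exactly once and nowhere else, and the explicit action $(u,v)\mapsto(u,-v)$ of $w_{\ell^2}$ are all attributed to ``a direct local calculation'' at the supersingular points which you acknowledge but do not carry out. Since that deformation-theoretic computation with Drinfeld level structure is precisely the technical heart of Edixhoven's papers, what you have written is an outline of where a proof would come from, not a proof; as it stands it could not be completed or checked without essentially redoing [Edi89, Edi90], and the ordinary-locus misidentification above would have to be corrected before the rest of the argument could even be set up.
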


We denote $\mathcal{X}$ for the semi-stable model of $X_0(\ell^2)$ over $W[\varpi]$
 constructed by Edixhoven, where $\varpi^{12k(6k+1)} = \ell$. We denote $\mathfrak{m}$ for the maximal ideal of $W[\varpi]$, and for any scheme $\mathcal{Y}$ over $W[\varpi]$, we denote $\mathcal{Y}_s$ for its special fibre.

\begin{theorem}\label{ThmSemi}
The curve $X_{\sC}(\ell)$ obtains semi-stable reduction over the field
  $\Q_\ell^{\nr}(\varpi)$, where $\varpi^{\ell^2-1} = \ell^2$. There exists a semi-stable
  model, which is stable unless $\ell=13$, whose special fibre consists of the following components:
\begin{itemize}
\item $k$ \textbf{horizontal} components, one for each supersingular elliptic curve, all isomorphic to $u^2 = v^{6k+1}+1$.
\item Three \textbf{vertical} components, of which one is rational, and two are isomorphic
  to $\mathrm{Ig}(\ell)/\mathrm{C}_4$. 
\end{itemize}
Every horizontal component intersects every vertical component exactly once, and there are no other intersections.
\begin{proof}
Because $X^+_0(\ell^2) \simeq_{\Q} X_{\sC}(\ell)$, it follows from \cite[Proposition 5]{Ray90}
  that the quotient $\mathcal{X}^+ = \mathcal{X}/w_{\ell^2}$ is a semi-stable model for
  $X_{\sC}(\ell)$. To describe the special fibre of this model, note that the order of
  $w_{\ell^2}$ is invertible on $\mathcal{O}_{\mathcal{X}}$, and we therefore have 
\[
\HH^1\left(\langle w_{\ell^2}\rangle,\mathfrak{m}\right) = 0.
\]
This implies that $\mathcal{O}_{\mathcal{X}^+}/ \mathfrak{m} \simeq \left(
\mathcal{O}_{\mathcal{X}} / \mathfrak{m} \right)^{w_{\ell^2}}$, and hence $\mathcal{X}^+_s =
\mathcal{X}_s/w_{\ell^2}$. The description of the special fibre follows from that of the
action of $w_{\ell^2}$ in Theorem \ref{Edix}. First, we note that both rational components of
$\mathcal{X}_s$ are identified by $w_{\ell^2}$, giving rise to a unique rational component in
the quotient. By~\cite[\S 2.2]{Edi89}, the quotients of the non-rational vertical
  components are isomorphic to the Igusa curves $\mathrm{Ig}(\ell)/\mathrm{C_4}$, which are of genus $(k-1)(3k-2)/2$, whereas the quotients of the horizontal components have affine equation $u^2 = v^{6k+1}+1$, and are hence of genus $3k$. The result follows.

\end{proof}
\end{theorem}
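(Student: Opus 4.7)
The approach would be to realise $X_{\sC}(\ell)$ as the quotient $X_0^+(\ell^2) = X_0(\ell^2)/w_{\ell^2}$, using the $\Q$-isomorphism recalled at the start of Section~\ref{sec:Xs13}, and to obtain a semi-stable model by quotienting Edixhoven's semi-stable model $\mathcal{X}$ of $X_0(\ell^2)$ from Theorem~\ref{Edix} by the Atkin-Lehner involution $w_{\ell^2}$. Since $w_{\ell^2}$ has order $2$, which is invertible on $\mathcal{O}_{\mathcal{X}}$ (as $\ell$ is odd), the quotient $\mathcal{X}^+ := \mathcal{X}/w_{\ell^2}$ is again semi-stable over $W[\varpi]$ by Raynaud's quotient criterion \cite[Proposition~5]{Ray90}. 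The base field is unchanged, since $\ell^2-1 = (12k)(12k+2) = 24k(6k+1)$, so $\varpi^{\ell^2-1}=\ell^2$ is equivalent to $\varpi^{12k(6k+1)}=\ell$.

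To compute the special fibre of the quotient, I would exploit the fact that invertibility of~$2$ on the base yields $\HH^1(\langle w_{\ell^2}\rangle,\mathfrak{m}) = 0$, so formation of invariants commutes with reduction modulo $\mathfrak{m}$. This gives $\mathcal{X}^+_s \simeq \mathcal{X}_s/w_{\ell^2}$, and the component structure of the quotient is then directly read off from the $w_{\ell^2}$-action described in Theorem~\ref{Edix}. The two rational vertical components of $\mathcal{X}_s$ are swapped, producing a single rational component. Each Igusa curve component $\mathrm{Ig}(\ell)/\pm 1$ is stabilised, and quotienting by the additional involution induced by $w_{\ell^2}$ (which, together with $\pm 1$, generates a $C_4$-action) yields $\mathrm{Ig}(\ell)/\mathrm{C}_4$. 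On each horizontal component $u^2 = v^{\ell+1}+1$ the action is $(u,v) \mapsto (u,-v)$, so setting $w = v^2$ one obtains the quotient $u^2 = w^{(\ell+1)/2}+1 = w^{6k+1}+1$.

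The intersection pattern transfers cleanly: each horizontal component is stabilised and meets each vertical component exactly once in $\mathcal{X}_s$, so the quotient inherits the analogous crossing graph, with the three vertical components (one rational, two Igusa quotients) each meeting each of the $k$ horizontal components once. Stability for $\ell > 13$ is then checked by observing that no component is rational with fewer than three crossing points. For $\ell=13$ one has $k=1$, so there is only a single horizontal component of genus~$3$ meeting all three vertical components, and the rational vertical component meets only this single horizontal component; this is the obstruction to stability in the exceptional case.

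The step I expect to be the main obstacle is identifying precisely the $C_4$-action on the non-rational vertical components as a combination of the $\pm 1$ already quotiented out in Edixhoven's description and the involution induced by $w_{\ell^2}$, which in turn rests on understanding the $w_{\ell^2}$-action in terms of the moduli interpretation of these components; this is the content of \cite[\S 2.2]{Edi89}. Once this is in hand, the genera $(k-1)(3k-2)/2$ for $\mathrm{Ig}(\ell)/\mathrm{C}_4$ and $3k$ for the horizontal components follow by Riemann--Hurwitz applied to the covers computed above.
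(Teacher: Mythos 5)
Your proposal follows essentially the same route as the paper's proof: realise $X_{\sC}(\ell)$ as $X_0(\ell^2)/w_{\ell^2}$, invoke Raynaud's criterion \cite[Proposition~5]{Ray90} to see that $\mathcal{X}^+ = \mathcal{X}/w_{\ell^2}$ is semi-stable, use invertibility of $2$ and vanishing of $\HH^1(\langle w_{\ell^2}\rangle,\mathfrak{m})$ to identify $\mathcal{X}^+_s$ with $\mathcal{X}_s/w_{\ell^2}$, and then read off the components from the action described in Theorem~\ref{Edix} and \cite[\S 2.2]{Edi89}. The only difference is that you supply a few details the paper leaves implicit (the exponent arithmetic $\ell^2-1 = 24k(6k+1)$, the substitution $w=v^2$, and the stability check for $\ell>13$), which are all correct.
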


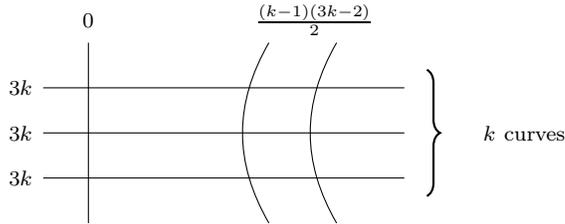
\begin{figure}[htbp!]\label{Fig}
\begin{center}
\begin{tikzpicture}[-,scale=.6]
 \path (-4,1) edge (4,1);
 \path (-4,0) edge (4,0);
 \path (-4,-1) edge (4,-1);

 \path (-3,2) edge (-3,-2); 
 \path (1,2) edge[bend right=30] (1,-2);
 \path (2.5,2) edge[bend right=30] (2.5,-2);
 
 \draw [decoration={brace,amplitude=0.5em},decorate, thick,black] (4.5,1.4) --  (4.5,-1.4);
 \node[text width=1.5cm] at (7,0) {\footnotesize $k$ curves};
 \node[] at (-3,2.5) {\footnotesize $0$};
 \node[] at (-4.5,1) {\footnotesize $3k$};
 \node[] at (-4.5,0) {\footnotesize $3k$};
 \node[] at (-4.5,-1) {\footnotesize $3k$};

 \node[text width=3cm] at (3.2,2.5) {\footnotesize $\frac{(k-1)(3k-2)}{2}$};
\end{tikzpicture}
\end{center}
\caption{Reduction of the semi-stable model of $X_{\sC}(\ell)$. }
\end{figure}

\par As a consequence, we obtain the genus formula $g = 6k^2 - 3k$ for the curve
$X_{\sC}(\ell)$. 

\begin{corollary}\label{cor:redn_13}
The split Cartan modular curve $X_{\sC}(13)$ has good reduction outside $13$, and potentially good reduction at $13$. More precisely, it obtains good reduction over the field $\Q_{13}^{\nr}(\varpi)$, where $\varpi^{84} = 13$. 
\begin{proof}
 The result follows from Theorem \ref{ThmSemi}. Indeed, 
we may contract all three rational curves to obtain a smooth model over $W[\varpi]$. 
\end{proof}
\end{corollary}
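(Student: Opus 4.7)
The plan is to specialize Theorem \ref{ThmSemi} to $\ell = 13$, in which case $k = 1$. The coincidence that makes $\ell = 13$ special is visible at once: the two Igusa vertical components $\mathrm{Ig}(13)/\mathrm{C}_4$ have genus $(k-1)(3k-2)/2 = 0$, and the remaining vertical component is rational by construction, so all three vertical components of the stable model are copies of $\PP^1$. The single horizontal component is the smooth curve $u^2 = v^7 + 1$ of genus $3 = 6k^2 - 3k$, which matches the genus of $X_{\sC}(13)$ recalled in \S\ref{Rank}.

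From the intersection pattern recorded in Theorem \ref{ThmSemi}, each of the three rational components meets the horizontal component transversally at exactly one point, and does not meet the other vertical components. Each is therefore a $\PP^1$ attached to the rest of the special fibre at a single smooth point of the horizontal curve, and such a component can be contracted to produce a regular proper model over $W[\varpi]$ whose special fibre is just the smooth curve $u^2 = v^7 + 1$. This yields good reduction over $\Q_{13}^{\nr}(\varpi)$; the extension is the one appearing in Theorem \ref{ThmSemi}, with $\varpi^{\ell^2 - 1} = \varpi^{168} = 169$, equivalently $\varpi^{84} = 13$ after choosing the appropriate square root.

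Good reduction outside $13$ is standard: via the $\Q$-isomorphism $X_{\sC}(13) \simeq X_0^+(169)$, it follows from the existence of a smooth proper Deligne--Rapoport model of $X_0(169)$ over $\Z[1/13]$ together with the fact that quotienting by the Atkin--Lehner involution $w_{169}$ preserves smoothness on $\Z[1/13]$. The substantial work lies entirely in Theorem \ref{ThmSemi}; the corollary itself has no real obstacle. What is worth emphasizing is that $k = 1$ is precisely the condition which forces every vertical component to be rational, so among the curves $X_{\sC}(\ell)$ with $\ell \equiv 1 \pmod{12}$, the level $\ell = 13$ is the unique one for which the semi-stable model contracts to a smooth model and potentially good (rather than merely potentially semi-stable) reduction is obtained.
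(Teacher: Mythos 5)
Your proposal is correct and follows essentially the same route as the paper: specialize Theorem \ref{ThmSemi} to $\ell=13$ (so $k=1$, making the two Igusa components $\mathrm{Ig}(13)/\mathrm{C}_4$ of genus $0$ and hence all three vertical components rational) and contract them to obtain a model with smooth special fibre $u^2=v^7+1$, keeping track that $\varpi^{168}=169$ amounts to $\varpi^{84}=13$. The paper's proof is just this contraction argument stated tersely; your added checks (the genus count and the Deligne--Rapoport argument for good reduction away from $13$) are consistent elaborations rather than a different method.
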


\begin{Remark}Since $X_{\ns}(13)\simeq X_{\sC}(13)$, the same result holds for the non-split Cartan modular curve of level~13. \end{Remark}

% -------------------------------------------------------------
\subsection{Defining equations and known rational points }Baran \cite{Bar14a} finds an explicit defining equation
for $X_{\sC}(13)$ as follows. As the curve $X_{\sC}(13) \simeq
X_0^+(169)$ is of genus $3$, it is either hyperelliptic or has a smooth plane quartic
model. It may be
checked that the $q$-expansions of the Galois conjugates of $f$ do not satisfy a quadratic relation, but do satisfy
a quartic relation, resulting in the plane model\footnote{Hopefully, no confusion will arise from our use of the letters $X,Y$ and
    $Z$, which also denote other objects in this paper, as projective coordinates.}
\[ (-Y-Z)X^3 +(2Y^2 +ZY)X^2 +(-Y^3 +ZY^2 -2Z^2 Y+Z^3)X+(2Z^2 Y^2 -3Z^3 Y)=0,
\]
which has good reduction away from~13. To apply the algorithms of \cite{Tui16, Tui17}, it will be convenient to have a plane quartic model whose $Y^4$-coefficient is 1. For this reason we apply the substitution $(X:Y:Z)\mapsto (X-Y:X+Y:X+Z)$ 
giving the model $Q(X,Y,Z) = 0$, where
  \begin{align*}
    Q(X,Y,Z)=& \,Y^4 + 5X^4 - 6X^2 Y^2 + 6X^3 Z + 26X^2 YZ + 10XY^2 Z - 10Y^3 Z - 32X^2 Z^2
    -40XYZ^2\\ &+ 24Y^2 Z^2 + 32XZ^3 - 16YZ^3
  \end{align*}
  which we will use henceforth  and which has good reduction away from~2 and~13.
  With respect to this model, the 7 known rational points are as follows:
 \begin{center}
 \begin{tabular}{|c |c|c|c|c|c|c|}
   \hline $P_0$ &$P_1$ &$P_2$ &$P_3$ &$P_4$ &$P_5$ &$P_6$ \\
   \hline  $ (1 : 1 : 1)$&$ (1 : 1 : 2)$ & $ (0 : 0 : 1)$ & $ (-3 : 3 : 2)$ & $ (1 : 1 : 0)$ & $ (0 : 2 : 1)$ & $ (-1 : 1 : 0)$\\
   \hline
  \end{tabular}
  \end{center}
  In the remainder of this section, we show that there are no other rational points on $X$.

% -------------------------------------------------------------
\subsection{Finding rational points on the first affine chart} 
\label{subsec:patch1}
Set $Y$ to be the affine chart $Z\neq 0$ with respect to the model $Q=0$, with coordinates $x=X/Z$, $y=Y/Z$. Then $Y$ contains all known rational points, except $P_4$ and $P_6$. Let us choose the basepoint to be $b= P_2 = (0,0)$. Define $Q_y =\partial Q/\partial y$, and consider the differentials
\[
\boldsymbol{\omega} := \left(
\begin{array}{c}1 \\ 
x \\y \\
-160x^4/3 + 736x^3/3-16x^2 y/3+436x^2/3-440xy/3+68y^2/3 \\ 
-80x^3/3+44x^2-40xy/3+68y^2/3- 32 \\
-16x^2 y+28x^2+72xy-4y^2-160x/3+272/3 \\
\end{array} \right) \frac{dx}{Q_y}. \]
which satisfy all the properties in \S \ref{subsec:notation}. By computing several small Hecke operators with respect to this basis, and taking appropriate linear combinations, we obtain matrices
\[
Z_1 =
\left(
\begin{array}{cccccc}
    0 & -976 & -1104 &  10 &  -6 &  18 \\
  976 &   0 & -816 &  -3 &  1 &   3 \\
 1104 &   816 &   0 &  -3 &   3 & -11 \\
  -10 &    3 &   3 &   0&   0 &   0 \\
    6 &  -1 &  -3 &    0 &   0 &  0 \\
  -18 &    -3 &    11 &    0 &   0 &   0 \\
\end{array} \right) 
\qquad 
Z_2 = 
\left(
\begin{array}{cccccc}
  0 &  112 & -656 &   -6 &   6 &   6 \\
-112 &     0 & -2576 & 15 &   9 &  27 \\
  656 & 2576 &   0 &  3 &  3 &  -3 \\
    6 & -15 &  -3 &  0 &  0 &  0 \\
   -6 &  -9 &  -3 &  0 &  0 &  0 \\
   -6 & -27 &   3 &   0 &   0 &  0 \\
\end{array} \right)
\]
which encode independent Tate classes $Z_1,Z_2 \in \HH^1 _{\dR}(X)\otimes \HH^1 _{\dR} (X)$ with respect to the basis $\boldsymbol{\omega}$, which satisfy the conditions of~\S\ref{subsec:bundle-AZ}.
 We find that a basis of $\HH^0 (X,\mathcal{O}(X\backslash Y))$ is given by $1,x,y,x^2,xy,y^2 $.
Using the algorithm outlined after Lemma~\ref{lemma:silly_RR}, we compute the Hodge filtration of the connections $\cA_{Z_i}$, and obtain:
\[
\begin{array}{lclclcl}
  \eta _{Z_1 } &=& -(44x^2 + 148/3xy + 8y^2 )\frac{dx}{Q_y} & & 
  \eta _{Z_2 } &=& -(40x^2+148xy+36y^2 )\frac{dx}{Q_y} \\
\boldsymbol{\beta}_{\Fil, Z_1 } &=& (0,1/2,1/2)^{\intercal} & & 
\boldsymbol{\beta}_{\Fil, Z_2 } &=& (0,-1/2,-5/2)^{\intercal} \\
\gamma_{\Fil,Z_1 } &=& 5y/6+3x/2 & & 
\gamma_{\Fil,Z_2 } &=& -5y/6-15x/2. \\
\end{array}
\]

\par Define $\mathcal{U}_1 :=Y_{\F_p} \cap \{ Q_y \neq 0 \}$. As our model is monic in $y$, we can apply the methods of \cite{Tui16} to define a lift $\Phi $ of Frobenius on a strict open neighbourhood of $]\mathcal{U}_1[$ satisfying $\Phi(x) = x^p$. The base point $b = P_2$ is a Teichm\"uller point with respect to our chosen $\Phi$. The Frobenius structure of $\cA_{Z_i}^{\rig}$ is determined using the techniques of~\S\ref{subsec:AZ-Frobenius}. This enables us to compute $\theta_{Z_1}$ and $\theta_{Z_2}$ as a power series on every residue disk in $\mathcal{U}_1$ via Lemma~\ref{lemma:ht_expl}. 

% -------------------------------------------------------------

\subsubsection{Equivariant $p$-adic heights and determinants}
In this subsection, we explain how to compute the function in Lemma~\ref{lemma:qc_pair} for the
quadratic Chabauty pairs associated to $Z_1$ and $Z_2$, respectively. First, we can compute the action of $K = \Q(\zeta_7)^+$ on $\HH^1_{\dR}(X_{\Q_q})$ for any prime $q$ of good reduction by noting that, if $\iota $ denotes the inclusion of $K\otimes \Q _q $ into $\End (\HH^1 _{\dR} (X_{\Q _q }))$, we have 
\[
\iota (a_q )(f)=\Fr _q +q\Fr _q ^{-1}.
\]
Since $a_3 := a_{3}(f)=-\alpha  ^2 -2\alpha  $ generates $K$, we uniquely determine $\iota$ by computing $\iota(a_3)$. Starting from the splitting defined by the basis $(\omega_i)_i$, which is not $K$-equivariant in the sense of Remark~\ref{rk:ht_equiv}, we use $\iota$ to compute a $K$-equivariant splitting of the Hodge filtration on $\HH^1 _{\dR}(X)$. Let $h$ denote the $p$-adic height taken with respect to this splitting; it is $K$-equivariant in the sense of Remark~\ref{rk:ht_equiv}. Finally, as in the introduction we set
\[
\mathcal{E} = \HH^0(X_{\Q _p },\Omega ^1 )^* \otimes_{K\otimes \Q_p} \HH^0 (X_{\Q _p },\Omega ^1 )^*.
\]

\par By Lemma \ref{lemma:stating_obvious} and Lemma \ref{lemma:ht_qcpair}, we need to consider, for $x \in X(\Q_p)$ and $Z \in \{Z_1,Z_2\}$, the extensions
\[
E_1(x) := E_1(A_Z(b,x)), \qquad E_{2,Z}(x):= E_2(A_Z(b,x))
\]
with notation as in \eqref{E1E2}, viewed as elements of $\HH^0(X_{\Q_p}, \Omega ^1)^*$. We start by computing $E_{1}(P_i)$ and $E_{2,Z}(P_i)$ for our known points $P_0,\ldots,P_6 \in X(\Q)$. We find that $E_1(P_4)$ is nonzero, and hence generates $\HH^0 (X_{\Q _p }, \Omega ^1)^* $ over $K_p := K\otimes \Q _p$. Moreover, we compute that the elements
\[
\iota (a_3 )^i \left(E_{1 }(P_4 )\otimes _{K_p }E_{2,Z_1 }(P_4 ) \right), \qquad i = 0,1,2
\]
are a $\Q_p$-basis for $\mathcal{E}$, and we define $\psi_1 ,\psi_2 ,\psi_3$ to be the dual basis of $\mathcal{E}^*$. We also find that
\[
\begin{array}{llr}
E_1 (P_1 )\otimes_{K_p} E_{2,Z_1 }(P_1 )  &=&\iota(3164+1994\alpha + 294\alpha ^2) \cdot E_1 (P_4 )\otimes_{K_p} E_1 (P_4 ) \\
E_1 (P_3 )\otimes_{K_p} E_{2,Z_1 }(P_3 ) &=&\iota(1574 + 1006\alpha +150\alpha ^2) \cdot E_1 (P_4 )\otimes_{K_p } E_1 (P_4 ) \\
E_1 (P_4 )\otimes_{K_p} E_{2,Z_1 }(P_4 ) &=&\iota(-232-134 \alpha -18\alpha ^2) \cdot E_1 (P_4 )\otimes_{K_p } E_1 (P_4 )
\end{array}
\]
so that the three classes on the left form a basis for $\mathcal{E}$. By Lemma~\ref{lemma:qc_pair} and~\ref{lemma:ht_qcpair}, this gives two matrices
\[
 T_i(x) := 
 \left(\begin{array}{cccc}
 \theta_{Z_i}(x) & \Psi_1(Z_i\ ,x) & \Psi_2(Z_i\ ,x) & \Psi_3(Z_i\ ,x) \\
 \theta_{Z_1}(P_1) & \Psi_1(Z_1,P_1) & \Psi_2(Z_1,P_1) & \Psi_3(Z_1,P_1) \\
 \theta_{Z_1}(P_3) & \Psi_1(Z_1,P_3) & \Psi_2(Z_1,P_3) & \Psi_3(Z_1,P_3) \\
 \theta_{Z_1}(P_4) & \Psi_1(Z_1,P_4) & \Psi_2(Z_1,P_4) & \Psi_3(Z_1,P_4) 
 \end{array} \right),  
 \qquad \Psi_j(Z,z) := \psi_j(E_1(z)\otimes_{K_p} E_{2,Z}(z)),
\]
such that the locally analytic functions $\det(T_i(x)) : X(\Q _p ) \to \Q_p $ vanish on $X(\Q _p )_2$. It only remains to determine their common zeroes. This may be done by computing up to high enough precision, in the following sense. Let $F_i \in \Z_p \llbracket t \rrbracket$, and suppose
\[
G(t)=\sum _{n\geq 0}c_n t^n =\sum a_{ij}\int F_i (\int F_j )+\sum a_i \int F_i +\sum b_i F_i 
\] 
is a $\Q_p$-linear combination of the $F_i$, their single integrals and their double integrals. Then an elementary estimate gives us 
\[
v_p (c_n )\geq \min (\{ v_p (a_i )\} \cup \{ v_p (a_{ij})\} \cup \{ v_p (b_i )\})-2 \lfloor \log _p (n) \rfloor,
\] 
so that if we compute enough coefficients for the power series $F_i$, the slopes of the Newton polygon of $G(t)$ beyond our precision are bounded below by $-1$, and can hence not come from $\Q_p$-rational points.

\par The following table contains the zeroes of the functions $\det(T_i(x))$on $]\mathcal{U}_1
[$, computed to precision $O(17^5)$; all of
them are simple.
\begin{center}
\begin{small}
 \begin{tabular}{|c||r|r|}
    \hline
$X (\F_{17})$  & $\det(T_1 (x))=0$ & $\det(T_2(x))=0$  \\
  \hline
  \hline
  $(0,0)$ & ${\bf 0}$ & ${\bf 0}$ \\ 
  $(0,2)$  & ${\bf 0}$ & ${\bf 0}$ \\
&   $9 \cdot 17 + 11 \cdot 17^2 + 11 \cdot 17^3 $ & \\
  $(4,1)$  & & $4 + 13 \cdot 17 + 11 \cdot 17^2 + 4 \cdot 17^3 + 12 \cdot 17^4 $ \\
& & $4 + 15 \cdot 17 + 4 \cdot 17^2 + 2 \cdot 17^3 + 11 \cdot 17^4 $  \\
  $(5,-2)$ & $5 + 14 \cdot 17 + 7 \cdot 17^2 + 13 \cdot 17^3 + 12 \cdot 17^4 $ &   \\
& & $5 + 15 \cdot 17 + 10 \cdot 17^2 + 8 \cdot 17^3 + 8 \cdot 17^4$ \\
& & $5 + 3 \cdot 17 + 8 \cdot  17^2 + 4 \cdot 17^3 + 2 \cdot 17^4 $ \\
   $(7,-7)$  & ${\bf 7 + 8 \cdot 17 + 8 \cdot 17^2 + 8 \cdot 17^3 + 8 \cdot 17^4}$ & ${\bf 7 + 8 \cdot 17 + 8 \cdot 17^2 + 8 \cdot 17^3 + 8 \cdot 17^4}$   \\ 
            & $7 + 6 \cdot 17 + 10 \cdot 17^2 + 14 \cdot 17^3 + 15 \cdot 17^4$ & \\
& & $7 + 7 \cdot 17 + 8 \cdot 17^3 + 7 \cdot 17^4 $  \\
  $ (7,6) $ & $7 + 8 \cdot 17 + 13 \cdot 17^2 + 3 \cdot 17^3 + 3 \cdot 17^4  $ & \\
  $ (8,0)$ & $8 + 8 \cdot 17 + 12 \cdot 17^2 + 16 \cdot 17^3 + 7 \cdot 17^4$ &\\
&  $8 + 16 \cdot 17 + 16 \cdot 17^2 + 17^3 + 9 \cdot 17^4 $ & \\
& & $8 + 10 \cdot 17 + 7 \cdot 17^2 + 2 \cdot 17^3 + 10 \cdot 17^4 $ \\
& & $ 8 + 5 \cdot 17 + 16 \cdot 17^2 + 6 \cdot 17^3 + 12 \cdot 17^4 $ \\
  $(8,-14)$  & $8 + 8 \cdot 17 + 14 \cdot 17^2 + 8 \cdot 17^3 + 16 \cdot 17^4$ &  \\
&  $8 + 10 \cdot 17 + 12 \cdot 17^2 + 2 \cdot 17^3 + 8 \cdot 17^4 $ & \\
  $(9,-4)$  & $9 + 10 \cdot 17 + 8 \cdot 17^2 + 4 \cdot 17^3 + 6 \cdot 17^4 $  &  \\
  $(9,-8)$  & ${\bf 9 + 8 \cdot 17 + 8 \cdot 17^2 + 8 \cdot 17^3 + 8 \cdot 17^4 }$  & ${\bf 9 + 8 \cdot 17 +
8 \cdot 17^2 + 8 \cdot 17^3 + 8 \cdot 17^4 }$  \\
& $9 + 6 \cdot 17^2 + 6 \cdot 17^3 + 3 \cdot 17^4 $ & \\
  $(13,-8)$  & $13 + 12 \cdot 17 + 7 \cdot 17^2 + 2 \cdot 17^3 + 12 \cdot 17^4 $ & \\
&  $13 + 13 \cdot 17 + 7 \cdot 17^2 + 7 \cdot 17^3 + 13 \cdot 17^4 $  &  \\
& & $13 + 4 \cdot 17 + 10 \cdot 17^2 + 3 \cdot 17^3 + 7 \cdot 17^4 $  \\
$(15,-3) $  & $15 + 8 \cdot 17 + 9 \cdot 17^2 + 4 \cdot 17^3 + 16 \cdot 17^4 $ &  \\
& $15 + 4 \cdot 17 + 5 \cdot 17^2 + 2 \cdot 17^4 $ & \\
\hline 
\end{tabular}
\end{small}
\end{center}
This recovers the points $P_2$, $P_5$, $P_3$ and $P_1$, and shows that there are no
other $\Q$-rational points in $]\mathcal{U}_1 [$. 
\begin{Remark}
Note that by its very construction, the function $\det(T_1(x))$ vanishes on the rational points $P_1,P_3$. However, the rational points $P_2$ and $P_5$ were not used as input for its construction, so that the vanishing of $\det T_1(x)$ on these two points seems to us an extremely convincing confirmation of the correctness of our algorithms. Furthermore, the vanishing of $\det(T_2(x))$ at \textit{none} of the four points $P_1,P_2,P_3,P_5$ is automatic, and again the definition of this function does not take $P_2$ or $P_5$ as input. 
\end{Remark}

\subsection{Rational points on $]\mathcal{U}_2 [$}\label{sec:u2}
\label{subsec:patch2}
We now consider a second affine chart $Y'$ defined by $X\neq 0$ with respect to the model $Q=0$, with coordinates $u:=Z/X$ and $v:=Y/X$. Then $Y'$ contains all known rational points, except $P_2$ and $P_5$. Let us choose the basepoint to be $b = P_6 = (0,-1)$. Define $Q_v =\partial Q/\partial v$, and consider the differentials
\[
\boldsymbol{\omega}' := \left( \begin{array}{c}
\\ -u \\ -1 \\ -v \\ 
\frac{768}{5}u^2v - \frac{448}{5}uv^2 - \frac{1536}{5}u^2 + 96uv + 16v^2 + \frac{2272}{15}u - \frac{1648}{15}v + \frac{1712}{15} \\ 
\frac{128}{7}u^2v^2 - \frac{5056}{35}u^2v + \frac{576}{35}uv^2 + \frac{7552}{35}u^2 - \frac{816}{7}uv + \frac{136}{7}v^2 + \frac{10736}{105}u - \frac{1072}{15}v - \frac{184}{105} \\ -\frac{448}{5}u^2v + \frac{288}{5}uv^2 + \frac{896}{5}u^2 - 80uv - 8v^2 - \frac{2272}{15}u + \frac{96}{5}v - \frac{1432}{15} \\
\end{array} \right) du/Q_v
\]
which satisfy all the properties in \S \ref{subsec:notation} with respect to $Y'$. Furthermore, they are cohomologous to the previous differentials $\boldsymbol{\omega}$ in $\HH^1_{\dR}(X)$, so that the matrices $Z_1$ and $Z_2$ remain unchanged.

\par We calculated the Hodge filtration using the algorithm outlined after Lemma~\ref{lemma:silly_RR}. To compute the Frobenius structure, define $\mathcal{U}_2 :=Y'_{\F_p} \cap \{ Q_v \neq 0 \}$. As our model is monic in $v$, we can again apply the methods of \cite{Tui16} to define a lift $\Phi$ of Frobenius on a strict open neighbourhood of $]\mathcal{U}_2[$ satisfying $\Phi(u) = u^p$. The base point $b = P_6$ is a Teichm\"uller point with respect to our chosen $\Phi$. The Frobenius structure of $\cA_{Z_i}^{\rig}$ is determined using the techniques of~\S\ref{subsec:AZ-Frobenius}. This enables us to compute $\theta_{Z_1}$ and $\theta_{Z_2}$ as a power series on every residue disk in $\mathcal{U}_2$ via Lemma~\ref{lemma:ht_expl}. 

\par Using the same rational points $P_1,P_3,P_4$ as we did in the previous section, we
construct two matrices $T_i'(u)$, whose determinant vanishes at all the rational points.
It suffices to check the residue disks of $(1:1:0)$, $(1:-1:0)$ and $(1:1:1)$. The
Frobenius lift we chose was not defined on the residue disk of $(1:1:1)$, but for the
other two disks we obtain the following zeroes to precision $O(17^5)$.
\begin{center}
\begin{small}
 \begin{tabular}{|c || r|r|}
    \hline
$X (\F_{17})$  & $\det T_1'(u)=0$& $\det T_2' (u)=0$ \\
  \hline
  \hline
$(0,-1)$ & {\bf 0} & {\bf 0}\\ 
 & $12\cdot 7 + 5\cdot 17^2 + 2\cdot 17^3 + 9\cdot 17^4 $ & \\
 & & $12\cdot 7 + 4\cdot 17^2 + 14\cdot 17^3 + 12\cdot 17^4$ \\
$(0,1)$ & {\bf 0} & {\bf 0} \\
 & $8\cdot 7 + 7\cdot 17^3 + 7\cdot 17^4 $ & \\
 & & $2\cdot 7 + 3\cdot 17^2 + 6\cdot 17^3 + 16\cdot 17^4 $ \\
\hline 
\end{tabular}
\end{small}
\end{center}
Again all the zeroes are simple, and we recover the points $P_6$ and $P_4$ as common zeroes of both functions. Combined with the calculations of \S \ref{subsec:patch1}, we have shown that there are no points on $X(\Q)$ besides the known ones, except perhaps on the residue disk of $(1:1:1)$.

\subsection{Rational points on $](1:1:1)[$}\label{sec:leftover}
The remaining residue disk lies at the point $P_0=(1:1:1)$, which was the disk where the
Frobenius lift above is not defined. Rather than choosing a new lift of Frobenius, as explained in ~\S\ref{trick} we may use Lemma \ref{lemma:total_complete_utter_nightmare} to reduce the computation of $p$-adic heights of $A_Z(P_0 ,x)$, for $x$ in $]P_0 [$, to the problem of computing the single integrals $\int^{P_0 }_{b}\boldsymbol{\omega }$. These integrals are computed using the original Frobenius lift, via overconvergence and evaluating at points defined over highly ramified extensions of $\Q _p $ (see~\cite[Prop 3.8, Prop 4.3]{BT}). We find that, for both choices of $Z$, the only roots of the resulting power series are at $P_0 = (1:1:1)$ (and are simple). This completes the proof of Theorem~\ref{thm:main_thm}.

% --------------------------------------------------
\appendix

% --------------------------------------------------
\section{Background on universal objects, isocrystals and $p$-adic comparison}
\label{sec:Tannakian}

In this appendix, we briefly discuss the notion of universal objects in unipotent Tannakian categories, and discuss them in the three examples of importance to us: The categories of unipotent $\Q_p$-lisse sheaves on $\overline{X}$, unipotent vector bundles with connection on $X_{\Q_p}$, and unipotent isocrystals on $\cX_{\F_p}$. There are comparison isomorphisms between the universal objects in these categories, which we recall in \ref{subsec:nonabeliancomparison}. 

\subsection{Universal objects in unipotent Tannakian categories. }\label{subsec:Tannakian}

For a general unipotent neutral Tannakian category $\cC$ with fibre functors $\omega$ and $\nu$, we will first define for every $n\geq 1$ certain universal objects $\cA_n (\cC,\omega )$. Their utility comes from the fact that one can often compute `extra structure' on fundamental groups and path torsors by instead computing that extra structure on certain universal objects in the category. 

\par It is instructive to first consider the case of fundamental groups of topological spaces. If $X$ is a locally path connected topological space, with universal cover $\widetilde{X}$, then there is a well known correspondence between deck transformations of $\widetilde{X}$ and elements of the fundamental group. This is perhaps most naturally formulated by replacing the universal cover with a \textit{pointed} universal cover 
\[
p :(\widetilde{X},\widetilde{b}) \ \lra \  (X,b).
\]
Then the correspondence is given by the statement that the following map is bijective: 
\[
\pi _1 (X,b) \ \lra p^{-1}( \{ b \} ), \quad \gamma \longmapsto \gamma (\widetilde{b}).
\]

\subsubsection{Universal objects. }Similar universal objects may be constructed in certain Tannakian categories. For the main definitions on Tannakian categories and their fundamental groups, we refer to Deligne \cite{Del90}, and will make free use of the language introduced there. 

\begin{Definition} We say a neutral Tannakian category $\cC$ over a field $K$ with fibre functor $\omega$ is \textit{unipotent} if its fundamental group $\pi _1 (\cC,\omega)$ is pro-unipotent. Equivalently, $\cC$ is unipotent if every object $V\in \cC$ admits a nonzero morphism $\mathbf{1} \lra V$ from the unit object $\mathbf{1}$ in $\cC$. 
\end{Definition}

\par Let $\cC$ be a neutral unipotent Tannakian category over a field $K$ of characteristic zero, with fibre functor $\omega $, let $A(\cC,\omega )$ denote its pro-universal enveloping algebra, with augmentation ideal $I$, and define $A_n (\cC,\omega ):=A(\cC,\omega )/I^{n+1}$. Recall that there is a canonical isomorphism
\[
\varinjlim_n A_n (\cC,\omega )^* \ \stackrel{\sim}{\lra} \ \mathcal{O}(\pi _1 (\cC,\omega ))
\]
between the dual Hopf algebra and the co-ordinate ring of the affine group scheme $\pi _1 (\cC,\omega )$. Since $A_n (\cC;\omega )$ is a finite dimensional $K$-representation of $\pi _1 (\cC,\omega )$, it corresponds by Tannaka duality to an object $\cA_n (\cC,\omega )$ of the category $\cC$, with the property that $\omega (\cA_n (\cC,\omega ))=A_n (\cC,\omega )$.

\par Now suppose $(\cC,\omega )$ has finite dimensional Ext-groups. A \textit{pointed object} in $\cC$ is a pair $(V,v)$ where $V\in \cC$ and $v\in \omega (V)$. An object of $\cC$ is \textit{n-unipotent} if there exists a filtration 
\[
V=V_0 \supset \ldots \supset V_n
\]
by subobjects such that $V_i /V_{i+1}$ is zero or is isomorphic to a direct sum of copies of the trivial object, for all $i$. A morphism between pointed $n$-unipotent objects is a morphism in $\cC$ that respects the filtrations $V_i$, and the chosen vectors $v$. 

\begin{Definition}
We say a pointed $n$-unipotent object  $(\mathcal{E},e)$ is a \textit{universal} pointed $n$-unipotent object if for all pointed $n$-unipotent objects $(\mathcal{V},v)$ there exists a morphism of pointed $n$-unipotent objects 
\[
(\mathcal{E},e) \ \lra \ (\mathcal{V},v).
\]
Finally, a \textit{universal pointed pro-object} in $\cC$ is a compatible sequence $\left\{ (\mathcal{E}_n ,e_n ) \right\}_{n \geq 1}$ of universal pointed $n$-unipotent objects in $\cC$, equipped with maps of pointed objects 
\[
(\mathcal{E}_n ,e_n ) \ \lra \ (\mathcal{E}_{n-1},e_{n-1}).
\]
\end{Definition}

\par Note that if a universal pointed $n$-unipotent object exists, it is unique up to unique isomorphism. Since we have a canonical identification of $\omega (\cA_n (\cC,\omega ))$ with $A_n (\cC,\omega )$, we have an associated $n$-unipotent pointed object $(\cA_n (\cC,\omega ),1)$. Furthermore, the quotient map $A_{n+1}(\cC,\omega) \to A_{n}(\cC,\omega)$ induces transition maps
\[
\left(\cA_n(\cC,\omega ), 1 \right) \ \lra \ \left( \cA_{n-1}(\cC,\omega ),1 \right).
\]
From the equivalence between representation of $\pi _1 (\cC,\omega )$ and left $A(\cC,\omega )$-modules, we obtain:
\begin{lemma}
The inverse system $\left\{ (\cA_n (\cC,\omega ),1) \right\}_{n\geq 1} $ is a universal pointed pro-object in $\cC$.
\end{lemma}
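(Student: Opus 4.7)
The plan is to show that each $(\cA_n(\cC,\omega),1)$ is a universal pointed $n$-unipotent object, and that the transition maps are morphisms of pointed objects. The entire argument is essentially a translation through Tannaka duality, which says that $\cC$ is equivalent to the category of left $A(\cC,\omega)$-modules, where the fibre functor corresponds to the forgetful functor.

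First I would reinterpret the notion of $n$-unipotence on the module side. An object $V \in \cC$ is $n$-unipotent if and only if the augmentation ideal $I \subset A(\cC,\omega)$ satisfies $I^{n+1}\cdot \omega(V) = 0$; equivalently, the $A(\cC,\omega)$-action on $\omega(V)$ factors through $A_n(\cC,\omega) = A(\cC,\omega)/I^{n+1}$. This is the standard fact that unipotent filtrations correspond to filtrations by powers of $I$, and it translates the definition of pointed $n$-unipotent object $(\mathcal{V},v)$ into the data of an $A_n(\cC,\omega)$-module together with a distinguished element $v \in \omega(\mathcal{V})$.

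Next, given such a pointed object $(\mathcal{V},v)$, I would construct the required morphism explicitly on the module side: the map
\[
\varphi_v : A_n(\cC,\omega) \ \lra \ \omega(\mathcal{V}), \qquad a \longmapsto a\cdot v,
\]
is visibly a map of left $A_n(\cC,\omega)$-modules that sends $1$ to $v$. By the Tannaka equivalence, $\varphi_v$ corresponds to a morphism $\cA_n(\cC,\omega) \to \mathcal{V}$ in $\cC$, and the condition $\varphi_v(1)=v$ translates to compatibility with the chosen basepoints. Uniqueness is automatic since $A_n(\cC,\omega)$ is generated as a left $A_n(\cC,\omega)$-module by the single element $1$, so any $A_n$-module map out of $A_n(\cC,\omega)$ is determined by the image of $1$.

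Finally, the transition maps are induced by the canonical quotients $A_{n+1}(\cC,\omega) \to A_n(\cC,\omega)$, which are surjective $A$-module maps sending $1$ to $1$; via Tannaka duality these correspond to morphisms $\cA_{n+1}(\cC,\omega) \to \cA_n(\cC,\omega)$ compatible with the distinguished sections. The main (and only) subtlety is keeping track of the Tannakian formalism carefully, in particular the identification $\omega(\cA_n(\cC,\omega)) = A_n(\cC,\omega)$ together with the action making $1$ a cyclic generator; once this dictionary is in place the proof is a short formal verification.
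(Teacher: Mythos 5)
Your proof is correct and takes essentially the same route as the paper, whose own proof is the one-line remark that the lemma follows from the equivalence between representations of $\pi_1(\cC,\omega)$ and left $A(\cC,\omega)$-modules. You have simply spelled out the module-theoretic content behind that remark (the identification of $n$-unipotent objects with $A_n$-modules, the cyclic generation of $A_n$ by $1$, and the quotient maps as transition morphisms), which is exactly what the paper is implicitly invoking.
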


\subsubsection{Path torsors. }
\label{subsec:universal-objects}
As in the topological case, we can define path torsors of the universal objects $\cA_n(\cC,\omega)$ in unipotent neutral Tannakian categories. If $\nu $ is another fibre functor of $\cC$, then recall there are corresponding path torsors 
\[
\pi _1 (\cC;\omega ,\nu )
\]
for the Tannakian fundamental group, given by the tensor compatible isomorphisms between $\omega$ and $\nu$. We define likewise
\[
A_n(\cC; \omega, \nu) := A_n (\cC,\omega ) \times _{\pi _1 (\cC,\omega )}  \pi _1 (\cC;\omega ,\nu ) 
\]
where the product is interpreted in the following sense: The co-ordinate ring $\mathcal{O}(\pi _1 (\cC;\omega ,\nu ))$ has the structure of a free $\mathcal{O}(\pi _1 (\cC,\omega ))$-comodule of rank one, giving $\mathcal{O}(\pi _1 (\cC ;\omega ,\nu ))^* $ the structure of a free $\mathcal{O}(\pi _1 (\cC ,\omega ))^* $-module of rank 1 hence we may define 
\[
A_n (\cC;\omega ,\nu ) :=\left((\mathcal{O}(\pi_1 (\cC;\omega ,\nu ))^* \otimes _{\mathcal{O}(\pi_1 (\cC;\omega ))^* }A_n (\cC,\omega )^* \right)^*.
\]

\par In the topological setting, the universal pointed cover has the following useful property, for any point $x\in X$, there is a canonical isomorphism 
\[
\pi _1 (X;b,x) \simeq p^{-1}({x}).
\] 
In the case of a neutral unipotent Tannakian category we have the following analogue, see for instance Kim \cite[\S 1]{Kim09} or Betts \cite[\S 6.2.2]{Bet17}.
\begin{lemma}\label{lemma:fibre}
Let $\omega $ and $\nu $ be fibre functors, and let $\omega_n $ and $\nu_n $ denote their restriction to the full subcategory of $n$-unipotent objects. Then we have functorial isomorphisms
\[
\nu (\cA_n (\cC,\omega ))\simeq A_n (\cC;\omega ,\nu ) \simeq \omega (\cA_n (\cC ,\omega )).
\]
\begin{proof}
By the universal property of $\cA_n $, the map
\begin{align*}
\Hom (\omega _n ,\nu _n ) \ & \lra \ \nu (\cA_n (\cC,\omega )) \\
F \ & \longmapsto \ F(\cA_n )(e_n )
\end{align*}
is an isomorphism of $K$-vector spaces. For the other identification, using $\Hom (\omega _n ,\omega _n )=A_n (\cC,\omega )$ gives a map
\[
\pi _1 (\cC;\omega ,\nu )\times _{\pi _1 (\cC,\omega )}A_n (\cC,\omega ) \ \lra \ \Hom (\omega _n ,\nu _n ) \ : \ (\gamma ,x) \ \longmapsto \ \gamma \circ x.
\]
Since both sides are free $A_ n(\cC,\omega )$-modules of rank one, this is an isomorphism.

\par For the second isomorphism, note that by duality we have an isomorphism $\Hom (\omega _n ,\nu _n )\simeq \Hom (\nu _n ,\omega _n )$, (i.e. the isomorphism is defined by sending $f\in \Hom (\omega _n ,\nu _n )$ to the morphism of functors $f^* $ sending $V\in \mathcal{C}$ to $f^* (V):=(f(V))^* $).
\end{proof}
\end{lemma}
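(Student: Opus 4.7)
The plan is to show both isomorphisms by interpreting the three objects as three descriptions of the same space: the natural transformations $\Hom(\omega_n, \nu_n)$ of functors from the subcategory of $n$-unipotent objects to $K$-vector spaces. So my first move is to establish a bijection $\nu(\cA_n(\cC,\omega)) \simeq \Hom(\omega_n, \nu_n)$, and then to identify the latter with $A_n(\cC; \omega, \nu)$.

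For the first bijection, the key is the universal property of $(\cA_n(\cC,\omega), 1)$ as a pointed $n$-unipotent object. Given $x \in \nu(\cA_n)$, I would define a natural transformation $\alpha^{(x)} : \omega_n \Rightarrow \nu_n$ as follows: for every $n$-unipotent $V$ and every $v \in \omega(V)$, the universal property yields a unique morphism $f_v : \cA_n \to V$ with $\omega(f_v)(1) = v$, and I set the component $\alpha^{(x)}_V(v) := \nu(f_v)(x)$. Naturality in $V$ is immediate from functoriality of the universal property. Conversely, any $\alpha : \omega_n \Rightarrow \nu_n$ produces the element $\alpha_{\cA_n}(1) \in \nu(\cA_n)$. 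A short chase, using that $f_v$ is characterized by sending $1 \mapsto v$, shows these two constructions are mutually inverse and functorial in $\nu$.

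For the second bijection, I would use the definition $A_n(\cC;\omega,\nu) = A_n(\cC,\omega) \times_{\pi_1(\cC,\omega)} \pi_1(\cC;\omega,\nu)$ together with the Tannakian identification $A_n(\cC,\omega) \simeq \End(\omega_n)$ (the ring of natural transformations of $\omega_n$ to itself on $n$-unipotent objects). The torsor construction precisely produces natural transformations between $\omega_n$ and $\nu_n$: composition of a tensor-compatible isomorphism $\gamma \in \pi_1(\cC;\omega,\nu)$ (which exists as $\cC$ is neutral) with an endomorphism of $\omega_n$ produces a map $\omega_n \Rightarrow \nu_n$, and this gives a bijection $A_n(\cC;\omega,\nu) \simeq \Hom(\omega_n, \nu_n)$ independent of the choice of $\gamma$, because the torsor structure quotients out exactly the ambiguity. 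Combining the two bijections yields the stated chain of isomorphisms; to obtain the outer isomorphism $\nu(\cA_n) \simeq \omega(\cA_n)$ directly, one notes that both are isomorphic to $\Hom(\omega_n, \nu_n)$ and to $A_n(\cC;\omega,\nu)$, or observes it via the $*$-duality between $\Hom(\omega_n, \nu_n)$ and $\Hom(\nu_n, \omega_n)$.

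The main obstacle I expect is the bookkeeping at the interface between the two descriptions: verifying that the universal-property identification of $\nu(\cA_n)$ with $\Hom(\omega_n, \nu_n)$ agrees with the torsor-theoretic identification through $A_n(\cC;\omega,\nu)$, in a way that is simultaneously functorial in both $\omega$ and $\nu$. This amounts to unwinding the definition of $A_n(\cC;\omega,\nu)$ as a twist of $A_n(\cC,\omega)$, matching the natural left $A_n(\cC,\omega)$-action on $\nu(\cA_n)$ (coming from the algebra structure $A_n = \End(\omega_n)$) with the torsor structure on $\pi_1(\cC;\omega,\nu)$. This is essentially built into the Tannakian formalism, but requires careful tracking of which side acts on which.
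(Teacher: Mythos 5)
Your proposal follows the same two-step strategy as the paper for the first isomorphism: identify $\nu(\cA_n(\cC,\omega))$ with $\Hom(\omega_n,\nu_n)$ via the universal property of the pointed object $(\cA_n,e_n)$, and then identify $A_n(\cC;\omega,\nu)$ with $\Hom(\omega_n,\nu_n)$ using that the contracted product is a free rank-one $A_n(\cC,\omega)$-module, as is $\Hom(\omega_n,\nu_n)$. Two small points: your inverse map $x \mapsto \alpha^{(x)}$ and the paper's forward map $F \mapsto F(\cA_n)(e_n)$ are indeed mutually inverse by the naturality chase you indicate, and this is correct; on the other hand, your parenthetical ``which exists as $\cC$ is neutral'' is not the right justification for a $K$-point of the torsor $\pi_1(\cC;\omega,\nu)$ --- neutrality supplies a fibre functor, not a section of the torsor. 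Fortunately neither your argument nor the paper's actually requires a literal choice of $\gamma$; the map of free rank-one modules is what does the work, so this is cosmetic.

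Where your proposal genuinely diverges from the paper, and where it contains a gap, is in the outer identification $A_n(\cC;\omega,\nu) \simeq \omega(\cA_n(\cC,\omega))$. You offer two alternatives. The first, ``one notes that both are isomorphic to $\Hom(\omega_n,\nu_n)$,'' is not correct as stated: $\omega(\cA_n(\cC,\omega)) = A_n(\cC,\omega) = \End(\omega_n) = \Hom(\omega_n,\omega_n)$, which is canonically the \emph{endomorphism} algebra, not $\Hom(\omega_n,\nu_n)$. These two spaces are both free rank-one modules over $\End(\omega_n)$, hence abstractly isomorphic, but exhibiting a \emph{functorial} isomorphism is precisely the content of the lemma; asserting it is circular. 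Your second alternative, the $*$-duality between $\Hom(\omega_n,\nu_n)$ and $\Hom(\nu_n,\omega_n)$, is the route the paper actually takes, and you correctly identify it as such, but you do not spell out the duality (which needs the compatibility of fibre functors with duals, sending $f$ to the natural transformation $V \mapsto (f(V^*))^*$) nor how $\Hom(\nu_n,\omega_n)$ then gets compared to the claimed third term. Since the first route is wrong and the second is only named, the final isomorphism in the chain is not actually established in your write-up; the rest of the argument matches the paper.
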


The identification of $A_n (\cC  ;\omega ,\nu )$ with $\Hom (\omega _n ,\nu _n )$ induces composition maps
\[
A_n (\cC ;\omega _2 ,\omega _3 )\times A_n (\cC ;\omega _1 ,\omega _2 )\to A_n (\cC ;\omega _1 ,\omega _3 )
\]
for all fibre functors $\omega _1 ,\omega _2 ,\omega _3 $.
We may also describe the right action of $A_n (\cC ,\omega )$ on $\nu (\cA _n ,\omega )$ induced by these isomorphisms. Given $x\in \nu (\cA _n ,\omega )$, and $y\in A_n (\cC ;\omega \nu )$, the product $y.x$ is defined as follows. Take $\widetilde{x}$ to be the unique endomorphism $\cA _n (\omega )$ such that $\nu \widetilde{x}(e_n )=x$. Then 
\[ 
y.x=\widetilde{x}(y).
\] 
% --------------------------------------------------
\subsection{Main examples. }\label{subsec:pathz}

We will now briefly discuss the main examples to which the above constructions are applied. Let us adopt the notation of \S~\ref{subsec:notation}, and consider the following neutral Tannakian categories:

\subsubsection{Unipotent lisse $\Q_p$-sheaves on $\overline{X}$. }
\label{subsec:background_lisse}

Let $X$ be as in \S~\ref{subsec:notation}, and let $K$ be a field of characteristic $0$. Consider the category of unipotent $\Q_p$-lisse sheaves on $\overline{X} := X \times_{\Q} \overline{K}$, which we will call $\cC^{\et}(\overline{X})$. Our choice of base point determines a geometric point $\overline{b} \in X(\overline{K})$, and hence a fibre functor $\overline{b}^*$ which makes $\cC^{\et}(\overline{X})$ into a neutral Tannakian category. For any other geometric point $\overline{x} \in X(\overline{K})$, we obtain a fibre functor $\overline{x}^*$. 

\par In the body of this paper, the Tannakian fundamental group is denoted by
\[
\pi_1^{\et,\scriptstyle{\Q_p}}(b) := \pi_1(\cC^{\et}(\overline{X});\overline{b}^*),
\]
which is a pro-unipotent affine group scheme over $\Q_p$, with an action of $G_K = \mathrm{Gal}(\overline{K}/K)$. The maximal $n$-unipotent quotient and its path torsors are denoted $\U_n^{\et}(b)$ and $\U_n^{\et}(b,x)$. We also use the notation:
\[
\left\{
\begin{array}{lll}
A_n^{\et}(b) &:=& A_n (\cC^{\et}(\overline{X});\overline{b}^*), \\
A_n ^{\et}(b,x) &:=& A_n (\cC^{\et}(\overline{X});\overline{b}^*,\overline{x}^* ).\\
\end{array} 
\right.
\]
as well as the notation $\cA_n^{\et}(b)$ for the corresponding universal $n$-unipotent object.

\subsubsection{Unipotent connections on $X_{K}$. }
\label{subsec:background_connections}

Let $X$ be as in \S~\ref{subsec:notation}, and let $K$ be a field of characteristic $0$. Consider the category of unipotent vector bundles with connection on $X_{K}$, which we will denote by $\cC^{\dR}(X_{K})$. Our choice of base point determines a fibre functor $b^*$ which makes $\cC^{\dR}(X_{K})$ into a neutral Tannakian category. For any point $x \in X(K)$ we obtain a fibre functor $x^*$. 

\par We denote its fundamental group by 
\[
\pi_1^{\dR}(X_{\Q_p},b) = \pi_1(\cC^{\dR}(X_{K});b^*),
\]
and the maximal $n$-unipotent quotient and its path torsors are denoted $\U_n^{\dR}(b)$ and $\U_n^{\dR}(b,x)$. We also use the following notation:
\[
\left\{
\begin{array}{lll}
A_n^{\dR}(b) &:=& A_n (\cC^{\dR}(X_{K});b^*), \\
A_n ^{\dR}(b,x) &:=& A_n (\cC^{\dR}(X_{K}); b^*, x^* ),
\end{array} 
\right.
\]
as well as the notation $\cA_n^{\dR}(b)$ for the corresponding universal $n$-unipotent object.

\subsubsection{Unipotent isocrystals on $\cX_{\F _p}$. }
\label{subsec:background_isocrystals}
We now recall some foundational results about the category of unipotent isocrystals on a curve over $\F _p $ \cite{Ber96,CLS99}. We first recall the notion of a rigid triple, and then define the category $\cC^{\rig}(\cX_{\F_p})$. 

\par We start by recalling the notion of a rigid triple. Related notions are those of a \textit{triple} in \cite{CT03}, or a $\Q_p $-\textit{frame} in \cite{LS07}. A \textit{rigid triple} over $\F_p$ is a triple $(Y,X,P)$, where 
\begin{itemize}
\item $P$  is a formal $p$-adic $\Z _p $ scheme, 
\item $X$ is a closed $\F _p $-subscheme of $P$, proper over $\F _p$, 
\item $Y \subset X$ is an open $\F _p$-subscheme such that $P$ is smooth in a neighbourhood of $Y$. 
\end{itemize}
Given a rigid triple $(Y,X,P)$, we let $P_{\Q_p}$ denote the Raynaud generic fibre of $P$. We let 
\[
]Y[ \ \subset P_{\Q _p }
\]
denote the tube of $Y$, which consists of all the points that reduce to a point of $Y$. Finally, let $j^\dagger \mathcal{O}_Y$ be the overconvergent structure sheaf on $]Y[$, as defined in \cite[\S 2.1.1.3]{Ber96}.

\begin{Definition}
Let $T = (Y,X,P)$ be a rigid triple. An overconvergent isocrystal on $T$ is a locally free $j^\dagger \mathcal{O}_Y$-module with connection. 
\end{Definition}

\par Given rigid triples $T=(Y,X,P)$ and $T'=(Y',X',P')$, and a morphism $f:Y\to Y'$, a \textit{compatible morphism} $T\to T'$ is a morphism 
\[
g:\mathcal{W} \ \lra \ P' _{\Q _p }
\]
from a strict neighbourhood of $]Y[$ to $P'_{\Q_p}$, which is compatible with $f$ via the specialisation map. 

\par Given two rigid triples $T=(Y,X,P)$ and $T'=(Y,X,P')$, there is a canonical
equivalence between the category of overconvergent isocrystals on $T$ and $T'$, via the
category of overconvergent isocrystals on $(Y,X,P\times _{\Z _p }P')$ , (see \cite[\S
2.3.1]{Ber96} or \cite[\S 7.3.11]{LS07}). For this reason we often suppress the choice of
rigid triple from our notation and terminology, and denote the category of unipotent
isocrystals on $Y$ by $\cC^{\rig}(Y)$. The category $\mathcal{C}^{\rig }(T)$ is
sometimes referred to as a \textit{realisation} of $\cC ^{\rig }(Y)$. By functoriality, for any $y \in Y(\F _p )$, we obtain a functor functor $y^* $ from $\cC^{\rig }(Y)$ to the category $\cC^{\rig}(\spec \F_p)$ of unipotent isocrystals on $\spec \F _p $, which  is canonically identified with the category of $\Q_p$-vector spaces, via the realisation given by the rigid triple 
\[
T = (\spec \F_p, \spec \F_p, \spf \Z_p).
\]
In this way $y^*$ can be viewed as a fibre functor on the unipotent Tannakian category $\mathcal{C}^{\rig }(Y_{\F _p })$. 

\par An explicit description of the fibre functor $y^* $ may be given as follows. Choose a lift $\widetilde{y}$ of $y$ to $]Y[$. Then $\widetilde{y}$ defines a fibre functor on $\cC ^{\rig }(T)$ in the obvious way. Whenever we write the fibre functor $y^* $ in this paper we shall mean $\widetilde{y}^*$ for some choice of $\widetilde{y}$.
The justification for this notation is that if $\widetilde{y}_1 $ and $\widetilde{y}_2 $ are two different lifts, then there is an isomorphism of functors
\[
T_{\widetilde{y}_1 ,\widetilde{y}_2 }:\widetilde{y}_1 \to \widetilde{y}_2
\]
defined by parallel transport as follows. Given an overconvergent isocrystal $(\mathcal{V},\nabla )$ on $T$, the pullback of $(\mathcal{V},\nabla )$ to $]y[$, and hence the maps
\[
\mathcal{V}(]y[)^{\nabla =0}\stackrel{\simeq }{\longrightarrow } \widetilde{y}_i ^* \mathcal{V}
\]
are bijective. The natural transformation $T_{\widetilde{y}_1 ,\widetilde{y}_2 }(\mathcal{V},\nabla )$ is defined as the composite 
\[
\widetilde{y}_1 ^* \mathcal{V} \stackrel{\simeq }{\longleftarrow }\mathcal{V}(]y[)^{\nabla =0}\stackrel{\simeq }{\longrightarrow } \widetilde{y}_2 ^* \mathcal{V}.
\]

\par The main example of interest to us is the following. Using the notation of \S~\ref{subsec:notation}, we denote $\cC^{\rig }(\cX_{\F_p })$ for the Tannakian category of unipotent isocrystals on the rigid triple 
\[
T = (\cX_{\F_p}, \cX_{\F_p}, \mathfrak{X}),
\]
where $\mathfrak{X}$ is the completion of $\cX$ along its special fibre. This will usually be referred to simply as the category of unipotent isocrystals on $\cX_{\F_p}$. Its fundamental group will be denoted by 
\[
\pi_1^{\rig }(\cX_{\F _p },\overline{b}) := \pi_1(\cC^{\rig}(\cX_{\F _p });\overline{b})
\]
and the maximal $n$-unipotent quotient and its path torsors are denoted $\U_n^{\rig}(\overline{b})$ and $\U_n^{\rig}(\overline{b},\overline{x})$. We also use the notation:
\[
\left\{
\begin{array}{lll}
A_n^{\rig}(\overline{b}) &:=& A_n (\cC^{\rig}(\cX_{\F_p} );\overline{b}^*), \\
A_n ^{\rig}(\overline{b},\overline{x}) &:=& A_n (\cC^{\rig}(\cX_{\F_p});\overline{b}^*,\overline{x}^* ).\\
\end{array} 
\right.
\]
as well as the notation $\cA_n^{\rig}(\overline{b})$ for the corresponding universal $n$-unipotent object.  When we want to emphasise the choice of a rigid triple $(Y,X,P)$, we write $A_n ^{\rig }(b,x)$ and $A_n ^{\rig }(b)$, where $b,x$ are $\Q _p $ points of $P_{\Q _p }$ lying above $\overline{b}$ and $\overline{x}$ respectively.

\par Pull-back by absolute Frobenius induces an endofunctor on $\cC^{\rig }(\cX_{\F_p})$, giving a Frobenius action on $\pi_1^{\rig}(X_{\F _p };b,x)$, see \cite[\S 2.4.2]{CLS99}. This induces an action on universal objects $\cA _n ^{\rig }(b)$. Transporting this to the realisation $\cC ^{\rig }(Y,X,\mathfrak{X})$, we obtain, for any choice of Frobenius lift $\phi $, an $F$-isocrystal $\cA _n ^{\rig }$ on $(Y,X,\mathfrak{X})$. Let $b_0$ be a Teichm{\"u}ller lift of $b$ with respect to the Frobenius lift $\phi $.

\begin{lemma}\label{lemma:too_far}
There is a morphism $\Phi :\phi ^* (\mathcal{A}_n ^{\rig }(b )) \to \mathcal{A}_n ^{\rig }(b )$ of overconvergent isocrystals on $(Y,X,\mathfrak{X})$ such that, for all points $z\in Y(\F _p )$ with Teichm\"uller representative $z_0 $, we have a commutative diagram
\[
\begin{tikzpicture}
\matrix (m) [matrix of math nodes, row sep=3em,
column sep=3em, text height=1.5ex, text depth=0.25ex]
{\varprojlim z_0 ^* (\cA _n ^{\rig },\nabla )(\overline{b}) & \varprojlim z_0 ^* (\cA _n ^{\rig} ,\nabla )(\overline{b})  \\
 \Hom (\overline{b}^* ,\overline{z}^* ) & \Hom (\overline{b}^* ,\overline{z}^* ) \\};
\path[->]
(m-1-1) edge[auto] node[auto]{} (m-2-1)
edge[auto] node[auto] { $z_0 ^* \Phi $ } (m-1-2)
(m-2-1) edge[auto] node[auto] { $\phi ^* $} (m-2-2)
(m-1-2) edge[auto] node[auto] {} (m-2-2);
\end{tikzpicture}
\]
\end{lemma}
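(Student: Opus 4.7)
The plan is to construct $\Phi$ via the universal property of $\cA_n^{\rig}(b)$, and then verify the stated compatibility at Teichm\"uller fibres by chasing the identifications of Lemma \ref{lemma:fibre}.

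First, I would observe that pullback $\phi^*$ is an exact endofunctor of $\cC^{\rig}(\cX_{\F_p})$: it preserves the trivial object, commutes with kernels and cokernels, and so restricts to an endofunctor on the full subcategory of $n$-unipotent objects. In particular, $\phi^*\cA_n^{\rig}(b)$ is again an $n$-step unipotent isocrystal. Since $b_0$ is a Teichm\"uller lift we have $\phi(b_0)=b_0$, which gives a canonical identification $b^*\phi^*\cA_n^{\rig}(b) \simeq b^*\cA_n^{\rig}(b)$; consequently the distinguished element $1\in b^*\cA_n^{\rig}(b)$ determines a canonical pointing $\phi^*(1)$ of $\phi^*\cA_n^{\rig}(b)$.

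Next, I would invoke the universal property of $(\cA_n^{\rig}(b),1)$ as the universal $n$-pointed unipotent isocrystal, which is the rigid-analytic counterpart of Theorem \ref{thm:kim_connection} and follows formally from the Tannakian construction of \S\ref{subsec:universal-objects}. Applied to the pointed object $(\phi^*\cA_n^{\rig}(b),\phi^*(1))$, this produces a unique morphism of isocrystals
\[
\Phi \ :\ \phi^*\cA_n^{\rig}(b) \ \lra \ \cA_n^{\rig}(b), \qquad \Phi(\phi^*(1)) = 1,
\]
which is the desired Frobenius structure.

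For the commutative diagram, fix $z\in Y(\F_p)$ with Teichm\"uller lift $z_0$, so that $\phi(z_0)=z_0$ and hence $z_0^*\phi^*\cA_n^{\rig}(b) = z_0^*\cA_n^{\rig}(b)$. Under the isomorphism
\[
z_0^*\cA_n^{\rig}(b) \ \simeq \ \Hom(\overline{b}^*,\overline{z}^*), \qquad x \longmapsto \widetilde{x},
\]
of Lemma \ref{lemma:fibre}, where $\widetilde{x}$ is the unique natural transformation with $\widetilde{x}(\cA_n^{\rig}(b))(1) = x$, I would check that $z_0^*\Phi$ becomes the action $\gamma \mapsto \phi^*\gamma$, where $\phi^*\gamma$ is well-defined because $\overline{b}$ and $\overline{z}$ are both $\phi$-fixed. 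The verification proceeds by chasing $\phi^*(1)\in z_0^*\phi^*\cA_n^{\rig}(b)$ through both sides: on the one hand, $\Phi$ sends it to $1$, and the identification of Lemma \ref{lemma:fibre} sends $1$ to the identity natural transformation; on the other hand, $\phi^*(1)$ corresponds under the analogous identification for the pulled-back universal object to the natural transformation obtained from the identity by applying $\phi^*$, and the two match by naturality in $\gamma$ of the multiplication formula of Lemma \ref{lemma:fml}.

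The main obstacle will be precisely this last bookkeeping step, which requires keeping straight the two incarnations of Frobenius, one as the abstract morphism $\Phi$ of isocrystals, the other as the induced endofunctor on the Tannakian fundamental groupoid, and to verify they intertwine correctly under the identifications of Lemma \ref{lemma:fibre} at every Teichm\"uller fibre.
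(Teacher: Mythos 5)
Your overall plan—define $\Phi$ via a universal property and then verify the compatibility by relating the two identifications of Lemma~\ref{lemma:fibre}—is in the same spirit as the paper's very terse proof (``functoriality of the isomorphism in Lemma~\ref{lemma:fibre}''). However, there are two concrete errors in the execution.

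First, the direction of the universal property. The universal property of $(\cA_n^{\rig}(b),1)$ produces, for any pointed $n$-unipotent object $(\mathcal{V},v)$, a unique morphism \emph{from} $(\cA_n^{\rig}(b),1)$ \emph{to} $(\mathcal{V},v)$. Applying it to $(\phi^*\cA_n^{\rig}(b),\phi^*(1))$ therefore yields $\cA_n^{\rig}(b)\to\phi^*\cA_n^{\rig}(b)$, the wrong direction for $\Phi$. To obtain $\Phi:\phi^*\cA_n^{\rig}(b)\to\cA_n^{\rig}(b)$ you must instead argue that $(\phi^*\cA_n^{\rig}(b),\phi^*(1))$ is itself a universal pointed $n$-unipotent object — this holds because $\phi^*$ is a monoidal auto-equivalence of $\cC^{\rig}(\cX_{\F_p})$ preserving $\overline{b}^*$ (since $\overline b$ is fixed by absolute Frobenius), so it sends a universal pointed object to a universal pointed object — and then apply \emph{its} universal property to $(\cA_n^{\rig}(b),1)$. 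This is implicit in the paper's Lemma~\ref{lemma:compromise}, whose proof speaks of ``a morphism of $n$-unipotent universal objects''.

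Second, the element chase at a general Teichm\"uller point $z_0$ does not make sense as written. The distinguished element $1$, and hence $\phi^*(1)$, lives in the fibre at $b_0$ (it is $e_n\in A_n(\cC,\omega)=\overline{b}^*\cA_n^{\rig}(\overline{b})$), not in $z_0^*\phi^*\cA_n^{\rig}(b)$; and the target $\Hom(\overline{b}^*,\overline{z}^*)$ has no identity natural transformation when $\overline z\neq\overline b$, so the claim that the identification ``sends $1$ to the identity natural transformation'' is vacuous at a general $z$. The commutativity you need is not an equality of images of a single element but a naturality statement: the isomorphism $\nu(\cA_n(\cC,\omega))\simeq\Hom(\omega_n,\nu_n)$ of Lemma~\ref{lemma:fibre} is natural both in the fibre functor $\nu$ and under post-composition with the exact tensor endofunctor $\phi^*$ (using that $\overline b^*\circ\phi^*\simeq\overline b^*$ and $\overline z^*\circ\phi^*\simeq\overline z^*$); tracing this naturality square with $\nu=\overline z^*$ is exactly the commuting diagram in the statement. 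The appeal to Lemma~\ref{lemma:fml} is not the right ingredient here — that lemma concerns the algebra/torsor structure on $A_n^{\dR}(Y)(x_i,x_j)$ in the de Rham realisation and is orthogonal to the Frobenius compatibility at issue.

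So: right framework, but the morphism is built in the wrong direction without the needed observation about $\phi^*$, and the ``bookkeeping step'' you flagged as the main obstacle is indeed where the argument as written goes wrong — it should be a diagram chase using functoriality of Lemma~\ref{lemma:fibre}, not an element chase with $\phi^*(1)$ in the wrong fibre.
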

\begin{proof}
This follows from the functoriality of the isomorphism in Lemma \ref{lemma:fibre}.
\end{proof}
% --------------------------------------------------
\subsection{Non-abelian comparison theorems}
\label{subsec:nonabeliancomparison}

In this subsection we recall some comparison theorems relating universal objects in the three unipotent Tannakian categories defined above. Resume the notation of \S~\ref{subsec:notation} and set $\mathcal{X}$ to be a smooth projective curve over $\Z_p $ with special fibre $\cX_{\F _p }$ and generic fibre $X_{\Q_p}$, and let $b,z$ be two $\Z_p$-points of $\cX$.

\par A proof of the following theorem, which we will refer to as \textit{nonabelian Berthelot-Ogus comparison} by analogy with \cite[Theorem 2.4]{BO83}, may be found in \cite[\S 2.4.1]{CLS99}.
\begin{theorem}[Chiarellotto--Le Stum]
The analytification functor defines an equivalence of categories 
\[
(-)^{\an} \ : \ \cC^{\dR}(X_{\Q_p}) \ \stackrel{\sim }{ \lra } \ \cC^{\rig}(\cX_{\F _p }).
\]
For any point $x\in X(\Q _p )$ with mod $p$ reduction $\overline{x}$, we have a canonical isomorphism of fibre functors on $\cC ^{\dR} (X_{\Q _p })$:
\[
\overline{x}^* \circ (-)^{\an } \simeq x^* .
\]
In particular, we obtain isomorphisms of universal objects for any $n\geq 1$:
\[
\cA_n ^{\dR}(b)^{\an } \simeq \cA_n^{\rig}(\overline{b}).
\]
\end{theorem}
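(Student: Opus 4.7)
Since $X$ is smooth and proper over $\Q_p$, the rigid generic fibre of the formal scheme $\mathfrak{X}$ coincides with the analytification $X_{\Q_p}^{\an}$ and the tube $]\mathcal{X}_{\F_p}[$ equals $X_{\Q_p}^{\an}$, so the overconvergent structure sheaf $j^{\dagger}\mathcal{O}$ is simply the ordinary analytic structure sheaf. The first step is therefore to check that analytification sends a unipotent algebraic vector bundle with connection $(\mathcal{V},\nabla)$ on $X_{\Q_p}$ to an object of $\cC^{\rig}(\cX_{\F_p})$: the underlying coherent sheaf is analytified in the usual way, the connection analytifies since $\Omega^1_{X_{\Q_p}/\Q_p}$ analytifies to $\Omega^1_{X_{\Q_p}^{\an}/\Q_p}$, and unipotence is preserved because the functor is exact and sends the trivial object to the trivial object.

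Next I would show the functor is fully faithful. Morphisms of connections are flat sections of the internal Hom-connection, and the identification $\Hom_{\nabla}(\mathcal{V}_1,\mathcal{V}_2) = \HH^0(X_{\Q_p}, (\mathcal{V}_1^\vee\otimes\mathcal{V}_2)^{\nabla=0})$ matches the analytic counterpart term by term. Here the key input is rigid GAGA for the projective variety $X_{\Q_p}$, which identifies global sections of any coherent sheaf with global sections of its analytification; since the kernel of a map of coherent sheaves is again coherent, the flat sections are preserved as well.

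For essential surjectivity I would proceed by induction on unipotence length. The trivial isocrystal $\mathcal{O}_{X_{\Q_p}^{\an}}$ is the analytification of the trivial connection. For the inductive step, any $n$-unipotent isocrystal $\mathcal{V}^{\rig}$ sits in an exact sequence $0\to \mathcal{W}^{\rig}\to \mathcal{V}^{\rig}\to \mathcal{O}^{\oplus k} \to 0$ with $\mathcal{W}^{\rig}$ of lower unipotence length, hence in the essential image by induction; to lift the extension itself, one compares $\Ext^1$ groups in $\cC^{\rig}(\cX_{\F_p})$ and $\cC^{\dR}(X_{\Q_p})$, both of which are computed by the hypercohomology of a two-term de Rham-type complex attached to $\mathcal{W}^\vee$. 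A second application of rigid GAGA identifies these hypercohomology groups, so every analytic extension class comes from an algebraic one. The main obstacle is precisely this step: one must be careful that GAGA applies not merely to the underlying coherent sheaf but to the complex computing extensions of connections, and one has to verify that the essential image is closed under taking kernels and cokernels so that the resulting lifted object is again a bona fide connection.

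Finally, the compatibility on fibre functors is straightforward. A $\Z_p$-point $x$ of $\cX$ reduces to $\overline{x}\in\cX(\F_p)$ and lifts canonically to an $x\in X_{\Q_p}^{\an}$ lying in the tube $]\overline{x}[$; evaluation of a connection at $x$ after analytification is identified with parallel transport from this lift to any other lift of $\overline{x}$, so $\overline{x}^*\circ(-)^{\an}\simeq x^*$ as fibre functors on $\cC^{\dR}(X_{\Q_p})$. Given these ingredients, the Tannakian formalism does the rest: an equivalence of neutral unipotent Tannakian categories together with an isomorphism of fibre functors induces an isomorphism of pro-universal enveloping algebras $A_n^{\dR}(b)\simeq A_n^{\rig}(\overline{b})$, and passing back through Tannaka duality yields the claimed isomorphism $\cA_n^{\dR}(b)^{\an}\simeq \cA_n^{\rig}(\overline{b})$ of universal objects.
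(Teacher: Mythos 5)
The paper does not give a proof of this theorem; it is stated with a citation to Chiarellotto--Le Stum \cite[\S 2.4.1]{CLS99}, so there is no internal argument to compare against. Your reconstruction via rigid GAGA is the standard argument and looks essentially correct: since $\cX$ is proper and smooth over $\Z_p$, the tube $]\cX_{\F_p}[$ is all of $X_{\Q_p}^{\an}$, so $j^\dagger\mathcal{O}=\mathcal{O}^{\an}$; GAGA for coherent sheaves and their cohomology on the proper curve $X_{\Q_p}$ then identifies $\Hom$ and $\Ext^1$ in both categories (the latter via the usual Hodge-to-de Rham spectral sequence applied degree by degree to the two-term complex $\mathcal{W}^\vee\to\mathcal{W}^\vee\otimes\Omega^1$), and your induction on unipotence length gives essential surjectivity. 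The fibre functor compatibility is essentially tautological given the paper's definition of $\overline{x}^*$ as $\widetilde{x}^*$ for a chosen lift $\widetilde{x}$.

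One small quibble: the worry you raise about verifying that ``the essential image is closed under kernels and cokernels so that the resulting lifted object is again a bona fide connection'' is misplaced. In the essential surjectivity step you transport an extension class through an isomorphism $\Ext^1_{\cC^{\rig}}(\mathcal{O}^k,\mathcal{W}^{\rig})\simeq\Ext^1_{\cC^{\dR}}(\mathcal{O}^k,\mathcal{W}^{\dR})$, and the algebraic representative of that class is by construction an object of $\cC^{\dR}(X_{\Q_p})$ whose analytification is isomorphic (not merely related by a possibly ill-behaved kernel/cokernel) to the given isocrystal; no closure property of the essential image is needed beyond what the $\Ext$-comparison already provides.
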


\par Since the vector space $A_n ^{\dR} (b,x)$ carries a filtration, and that the vector space $A_n ^{\rig }(\overline{b},\overline{x})$ carries a Frobenius action, the non-abelian Berthelot-Ogus theorem gives $A_n ^{\dR} (b,x)$ the structure of a filtered $\phi $-module structure. Explicitly, given a Teichm{\"u}ller representatives $b_0 $ and $x_0 $ for $b$ and $x$, the $\phi $-module is given by the endomorphism of $x_0 ^* \cA ^{\dR} (b)$ constructed in Lemma \ref{lemma:too_far}, and the isomorphism between the filtered vector space $x^* \cA _n ^{\dR}(b)$ and the $\phi $-module $x_0 ^* \cA _n ^{\dR}(\overline{b})$ is given by the parallel transport isomorphisms $T_{x,x_0 }$ and $T_{b,b_0 }$.

\par The filtered $\phi$-module structure on $A_n^{\dR}(b,x)$ is related to the $p$-adic Galois representation $A_n^{\et}(b,x)$ by Olsson's comparison theorem \cite[Theorem 1.11]{Ols11}. 

\begin{theorem}[Olsson]
For $A_n^{\et}(b,x)$ and $A_n^{\dR} (b,x)$ as above, there is an isomorphism
\[
\D_{ \cris }(A_n ^{\et}(b,x)) \stackrel{ \sim }{ \lra }A_n^{\dR} (b,x)
\]
of filtered $\phi $-modules, which on graded pieces $A[n] := \Ker(A_n(b,x) \to A_{n-1}(b,x))$ induce commutative diagrams
\[
\begin{tikzpicture}
\matrix (m) [matrix of math nodes, row sep=2.5em,
column sep=3em, text height=1.5ex, text depth=0.25ex]
{\D_{ \cris }(V_{\et})^{\otimes n} & V_{\dR }^{\otimes n} \\
\D_{ \cris }(A^{\et}[n]) & A^{\dR}[n] \\};
\path[->]
(m-1-1) edge[auto] node[auto]{} (m-2-1)
edge[auto] node[auto] { } (m-1-2)
(m-2-1) edge[auto] node[auto] { } (m-2-2)
(m-1-2) edge[auto] node[auto] {} (m-2-2);
\end{tikzpicture}
\]

\end{theorem}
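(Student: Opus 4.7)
The plan is to reduce to the classical abelian $p$-adic comparison theorem and then bootstrap to the unipotent setting using the filtration by powers of the augmentation ideal, keeping track of the Tannakian structure throughout.

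For the base case $n=1$, the torsor $A_1^{\et}(b,x)$ is canonically the extension of $\Q_p$ by $V=\HH^1_{\et}(\overline{X},\Q_p)^*$ classified by the Kummer image of $x-b$ in $\HH^1(G_{\Q_p},V)$, while $A_1^{\dR}(b,x)$ is the extension of $\Q_p$ by $V_{\dR}$ whose class in $\Ext^1_{\fil,\phi}(\Q_p,V_{\dR})\simeq V_{\dR}/\Fil^0$ equals $\AJb(x)$. The classical crystalline comparison (Faltings, Tsuji, Nizio\l) supplies a canonical isomorphism $\D_{\cris}(V)\simeq V_{\dR}$ of filtered $\phi$-modules, and its compatibility with the $p$-adic Kummer and cycle class maps identifies the two extension classes, proving the theorem for $n=1$.

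For the inductive step, I would exploit the canonical short exact sequence
\[ 0 \to A^{\et}[n] \to A_n^{\et}(b,x) \to A_{n-1}^{\et}(b,x) \to 0, \]
together with the surjection $V^{\otimes n} \twoheadrightarrow A^{\et}[n]$ whose kernel is explicitly described in terms of cup products (via $I^n/I^{n+1}$). Since $V$ is crystalline and $\D_{\cris}$ is compatible with tensor products and subquotients, every subquotient of $V^{\otimes n}$ is crystalline, and in particular the graded-pieces diagram in the statement commutes. Assuming the theorem for $n-1$ by induction, it remains to show that $A_n^{\et}(b,x)$ is itself crystalline and that $\D_{\cris}$ of the above exact sequence is canonically identified with the corresponding de Rham sequence, compatibly with filtration and Frobenius. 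Given this, the universal property of $\cA_n^{\dR}(b)$ in $\cC^{\dR}(X_{\Q_p})$, supplemented by the Berthelot--Ogus equivalence with $\cC^{\rig}(\cX_{\F_p})$ to read off the Frobenius structure, would uniquely pin down the isomorphism $\D_{\cris}(A_n^{\et}(b,x)) \simeq A_n^{\dR}(b,x)$.

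The main obstacle is precisely proving crystallinity of $A_n^{\et}(b,x)$ and producing a comparison that is canonical at the level of the whole Hopf algebra and path-composition structure, rather than merely an isomorphism of individual filtered $\phi$-modules. This cannot be extracted by a naive extension-by-extension argument, since extensions of crystalline representations are not crystalline in general. The known route, due to Olsson, is geometric: realise $A_n^{\et}(b,x)$ as the $p$-adic \'etale cohomology of an explicit simplicial log scheme built from the path space, compare it with the crystalline counterpart computing $A_n^{\dR}(b,x)$ via the log crystalline comparison theorem of Kato and Tsuji, and then deduce functoriality in $(b,x)$, compatibility with the $I$-adic filtration, and the compatibility on graded pieces from the simplicial construction and the multiplicativity of $\D_{\cris}$ on $V^{\otimes n}$.
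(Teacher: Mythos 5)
Your proposal ends up in the same place as the paper: the paper gives no proof of this statement at all, but simply quotes it as \cite[Theorem 1.11]{Ols11}, and your sketch — after a reasonable reduction of the $n=1$ case to the classical crystalline comparison and a dévissage along the $I$-adic filtration — correctly concludes that the essential content (crystallinity of the iterated extensions $A_n^{\et}(b,x)$ and canonicity of the comparison compatibly with the Hopf-algebra and path-composition structure) cannot be obtained by an extension-by-extension induction and must be taken from Olsson's log-geometric argument. So in substance your treatment agrees with the paper's, which likewise defers entirely to Olsson.
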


% -------------------------------------------------------------
\bibliographystyle{alpha}
\bibliography{References}
% -------------------------------------------------------------
\end{document}